\newtheorem{theorem}{Theorem}
\newtheorem{corollary}[theorem]{Corollary}
\newtheorem{conjecture}[theorem]{Conjecture}
\newtheorem{observation}[theorem]{Observation}
\newtheorem{lemma}[theorem]{Lemma}
\theoremstyle{definition}
\definecolor{darkyellow}{rgb}{0.960938, 0.742188, 0}
\definecolor{darkred}{rgb}{0.7, 0, 0}
\definecolor{darkgreen}{rgb}{0, 0.5, 0}
\definecolor{darkblue}{rgb}{0, 0, 0.7}
\definecolor{nodepink}{RGB}{254,199,184}
\definecolor{ColourSED1F_Factor1}{RGB}{230, 25, 75}
\definecolor{ColourSED1F_Factor2}{RGB}{60, 180, 75}
\definecolor{ColourSED1F_Factor3}{RGB}{255, 225, 25}
\definecolor{ColourSED1F_Factor4}{RGB}{0, 130, 200}
\definecolor{ColourSED1F_Factor5}{RGB}{245, 130, 48}
\definecolor{ColourSED1F_Factor6}{RGB}{145, 30, 180}
\definecolor{ColourSED1F_Factor7}{RGB}{70, 240, 240}
\definecolor{ColourSED1F_Factor8}{RGB}{240, 50, 230}
\definecolor{ColourSED1F_Factor9}{RGB}{128,0,0}
\definecolor{ColourFactor_Graph1}{RGB}{0, 114, 178}
\definecolor{ColourFactor_Graph2}{RGB}{230, 159, 0}
\definecolor{row_highlight_yellow}{RGB}{255,242,171}
\tikzset{K10ED1F_Factor1/.style={ColourSED1F_Factor1, line width=2pt}}
\tikzset{K10ED1F_Factor2/.style={ColourSED1F_Factor2, line width=2pt}}
\tikzset{K10ED1F_Factor3/.style={ColourSED1F_Factor3, line width=2pt}}
\tikzset{K10ED1F_Factor4/.style={ColourSED1F_Factor4, line width=2pt}}
\tikzset{K10ED1F_Factor5/.style={ColourSED1F_Factor5, line width=2pt}}
\tikzset{K10ED1F_Factor6/.style={ColourSED1F_Factor6, line width=2pt}}
\tikzset{K10ED1F_Factor7/.style={ColourSED1F_Factor7, line width=2pt}}
\tikzset{K10ED1F_Factor8/.style={ColourSED1F_Factor8, line width=2pt}}
\tikzset{K10ED1F_Factor9/.style={ColourSED1F_Factor9, line width=2pt}}
\tikzset{K10ED1F_Factor_Graph1/.style={ColourFactor_Graph1, line width=2pt}}
\tikzset{K10ED1F_Factor_Graph2/.style={ColourFactor_Graph2, line width=2pt}}
\tikzset{F1/.style={darkred, line width=2pt}}
\tikzset{F2/.style={darkgreen, dash pattern=on 8pt off 2pt,  line width=2pt}}
\tikzset{F3/.style={darkblue, dotted,   line width=2pt}}
\tikzset{circF2/.style={darkgreen, dash pattern=on 8pt off 2pt,  line width=1pt}}
\tikzset{circF3/.style={darkred, line width=1pt}}
\tikzset{circF1/.style={darkblue, dotted,   line width=1pt}}
\tikzset{circ_darkyellow/.style={darkyellow,dash pattern=on 8pt off 2pt,  line width=3pt}}
\tikzset{circ_darkred/.style={darkred,     line width=3pt}}
\tikzset{circ_darkblue/.style={darkblue,dotted,    line width=3pt}}
\tikzset{circ_darkgreen/.style={darkgreen, dash dot,  line width=3pt}}
\newcommand{\Mod}[1]{\ (\mathrm{mod}\ #1)}
\title{Balanced 1-Factorisations of 3- and 4-Regular Circulant Graphs}
\author{Jeremy Mitchell\thanks{jeremy.mitchell@uq.edu.au}}
\affil{{\small School of Mathematics and Physics\\ The University of Queensland\\ QLD 4072, Australia} }
\date{\today}
\begin{document}
\maketitle

\begin{abstract}
    We investigate 1-factorisations in which the 2-regular graphs that occur as the union of a pair of 1-factors appear an equal number of times across the unions of all pairs of 1-factors in the 1-factorisation. We call such 1-factorisations balanced 1-factorisations (B1Fs) and we present some results on B1Fs of 3- and 4-regular circulant graphs.
\end{abstract}

\section{Introduction}
A 1-regular spanning subgraph of a graph is known as a \emph{1-factor}. A partition of the edge set of a graph $G$ into 1-factors is called a \emph{1-factorisation} of $G$.
A natural question is: under what conditions does a 1-factorisation of the complete graph on $n$ vertices, $K_n$, exist? Clearly $n$ must be even, and one of the earliest proofs that this condition is sufficient is Kirkman's 1847 construction of 1-factorisations of $K_n$ for all even integers $n\geq 2$ \cite{kirkman1847problem}.

Given a 1-factorisation of a graph $G$, a well-studied problem is to ask if the 1-factorisation has the property that the union of each pair of 1-factors is isomorphic to the same subgraph $H$ of $G$.  Such a 1-factorisation is called a \emph{uniform 1-factorisation} (U1F) of $G$ and the subgraph $H$ is called the \emph{common graph}. Furthermore, a uniform 1-factorisation in which the common graph is a Hamilton cycle is called a \emph{perfect 1-factorisation} (P1F). The following famous conjecture is due to Kotzig \cite{kotzig1964theory}.

\begin{conjecture}\textup{\cite{kotzig1964theory}}
    For any $n\geq 2$, $K_{2n}$ admits a perfect 1-factorisation.
\end{conjecture}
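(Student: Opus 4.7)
The plan would be to follow the classical \emph{starter-adder} paradigm, which underlies both known infinite families of perfect 1-factorisations of $K_{2n}$. Label the vertices of $K_{2n}$ by $\mathbb{Z}_{2n-1}\cup\{\infty\}$, choose a 1-factor $F_0$ containing the edge $\{\infty,0\}$, and let $F_i$ denote the translate of $F_0$ under the shift $x\mapsto x+i$ (fixing $\infty$). The collection $\{F_0,F_1,\ldots,F_{2n-2}\}$ is automatically a 1-factorisation of $K_{2n}$, and by the translation symmetry it is perfect if and only if the single graph $F_0\cup F_k$ is a Hamilton cycle for every nonzero $k\in\mathbb{Z}_{2n-1}$. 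The task therefore reduces to exhibiting such a \emph{starter} $F_0$.

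When $2n-1=p$ is an odd prime, Anderson's construction takes the pairs $\{i,-i\}$ in $\mathbb{Z}_p^*$ together with $\{\infty,0\}$, after a reindexing by a primitive root of $\F_p$; a parallel construction of Nakamura handles the case where $n$ itself is prime. In both cases the Hamiltonicity condition is verified by analysing differences of powers of a primitive root in a finite field. I would next attempt to unify these two infinite families as specialisations of a single combinatorial design on $\F_q\cup\{\infty\}$, and then seek a variant that survives the composite case, perhaps by embedding $\mathbb{Z}_{2n-1}$ into an auxiliary multiplicative structure where a primitive-root-like element still exists.

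The main obstacle, and the reason Kotzig's conjecture has remained open since 1964, is precisely this last step: every known infinite-family construction exploits multiplicative structure available only when $2n-1$ (or a closely related integer) is prime, and there is no analogue for arbitrary composite orders. Naive probabilistic approaches also fail, since a uniformly random 1-factorisation of $K_{2n}$ typically produces pair-unions with many short cycles. A complete resolution would require either a genuinely new algebraic construction tailored to composite orders, or a recursive amalgamation procedure that builds a P1F of $K_{2n}$ from P1Fs of smaller complete graphs; no such recursion is presently known, and this is where any honest attempt must confront the full difficulty of the conjecture.
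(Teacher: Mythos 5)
There is no proof to compare against: the statement you were given is Kotzig's conjecture, which the paper explicitly records as an open conjecture (citing Kotzig, 1964) and makes no attempt to prove. Your proposal, to its credit, does not actually claim to prove it either --- it is a survey of the known partial results plus a research plan, and the plan's final step (extending the starter-adder/primitive-root constructions to composite $2n-1$) is precisely the part that has been open for sixty years. So as a ``proof'' it has an unavoidable gap: the reduction to finding a starter $F_0$ whose union with every translate is a Hamilton cycle is correct and standard, but you give no candidate starter for general $n$ and no argument that one exists, and the multiplicative structure your verification strategy relies on (differences of powers of a primitive root) simply is not available when $2n-1$ is composite.

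One small factual correction: the construction you describe on $\mathbb{Z}_{2n-1}\cup\{\infty\}$ with the pairs $\{i,-i\}$ is Kotzig's family (perfect when $2n-1$ is an odd prime); Anderson's family, which the paper also cites, is the one that works when $n$ is an odd prime and uses a different vertex labelling. Attributing the former to Anderson conflates the two known infinite families. Your concluding paragraph --- that a resolution would need either a genuinely new algebraic construction for composite orders or a recursive amalgamation from smaller P1Fs --- is a fair assessment of the state of the problem, but it is an assessment, not a proof, and should be presented as such.
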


Kotzig \cite{kotzig1963hamilton} provided an infinite family of 1-factorisations of the complete graph $K_{2n}$ that are perfect when $2n-1$ is an odd prime. Bryant, Maenhaut, and Wanless \cite{bryant2006new} constructed another infinite family of P1Fs of $K_{2n}$ where $2n-1$ is an odd prime, which is not isomorphic to the family given by Kotzig. Anderson \cite{anderson1973finite} gave an infinite family of 1-factorisations of $K_{2n}$ that are perfect when $n$ is an odd prime. Besides these infinite families there are a number of sporadic values of $n$ for which $K_{2n}$ has been shown to admit a P1F. Most recently a P1F of $K_{56}$ was found by Pike \cite{pike2019perfect}, which leaves $K_{64}$ as the smallest complete graph for which the existence of a P1F is unknown; for more information on the orders of complete graphs with known P1Fs, { a paper on the number of non-isomorphic P1Fs of $K_{16}$ by Gill and Wanless \cite{gill2020perfect} is recommended}.

For uniform 1-factorisations that are not perfect, the common graph will be a collection of two or more disjoint cycles of even lengths. We say that a U1F has \emph{type} $(c_1,c_2,\dots,c_t)$ if the common graph of the U1F is a collection of $t$ cycles of lengths $c_1,c_2,\dots,c_t$. For complete graphs $K_{2n}$ with $2n\leq 16$, all types of U1Fs up to isomorphism are known due to a result by Meszka and Rosa \cite{meszka2003perfect}.

\begin{theorem}\textup{\cite{meszka2003perfect}}
    If $\mathcal{F}$ is a U1F of $K_{2n}$, where $2n\leq 16$, then $\mathcal{F}$ is one of the following:
    \begin{enumerate}[(a)]
        \item \label{a} a P1F;
        \item \label{b} a U1F of $K_8$ of type $(4,4)$;
        \item \label{c} a U1F of $K_{10}$ of type $(4,6)$;
        \item \label{d} a U1F of $K_{12}$ of type $(6,6)$;
        \item \label{e} a U1F of $K_{16}$ of type $(4,4,4,4)$.
    \end{enumerate}
    Further, the U1Fs from cases \ref{b}, \ref{c}, \ref{d}, \ref{e} are unique up to isomorphism.
\end{theorem}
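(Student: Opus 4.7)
The plan is to enumerate the candidate cycle-types of the common graph for each relevant $2n$ and then handle existence and non-existence separately. The common graph $H$ of a U1F of $K_{2n}$ is a 2-regular spanning subgraph in which every cycle has even length at least $4$, so for each $2n\leq 16$ the types of $H$ form a short, explicitly enumerable list. For $2n\leq 6$ only the Hamilton-cycle (P1F) type arises; for $2n=8$ one also has $(4,4)$; for $2n=10$ also $(4,6)$; for $2n=12$ also $(4,8)$, $(6,6)$, $(4,4,4)$; for $2n=14$ also $(4,10)$, $(6,8)$, $(4,4,6)$; for $2n=16$ also $(4,12)$, $(6,10)$, $(8,8)$, $(4,4,8)$, $(4,6,6)$, $(4,4,4,4)$.

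For the existence half, I would exhibit a U1F of each type listed in (a)--(e). Constructions of P1Fs of $K_{2n}$ for all required even $2n\leq 16$ are available from the references cited in the introduction, so (a) is covered. For each of (b)--(e), an explicit construction is written down by developing a chosen starter 1-factor under a cyclic (or dihedral) action on the vertex set; uniformity is then verified by a direct computation of the cycle type of the union of the starter with each of its translates, since transitivity of the group action propagates this structure to every pair of 1-factors.

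For the non-existence half, each type not appearing in (a)--(e) must be ruled out. The natural approach is to fix a 1-factor $F_0$ in a hypothetical U1F $\mathcal{F}$ and study the prescribed 2-factors $F_0\cup F$ as $F$ ranges over $\mathcal{F}\setminus\{F_0\}$. Global double-counts, for example counting $4$-cycles appearing across all pairwise unions and comparing with the total number of $4$-cycles in $K_{2n}$, give divisibility obstructions that eliminate several types immediately. For types that survive the counting step, a local analysis around a single vertex, examining which edges incident to it are forced into the short cycles of the common graph, yields further constraints; when even this is insufficient, the problem reduces to a bounded completion question: after normalising $(F_0,F_1)$ up to an automorphism of $K_{2n}$, only finitely many extensions of $F_0\cup F_1$ to a full 1-factorisation remain, each of which can be tested directly.

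The main obstacle is precisely these residual cases, most notably $(8,8)$ and $(4,6,6)$ in $K_{16}$ and $(6,8)$ in $K_{14}$, where no elementary numerical invariant appears to close the argument. A computer-assisted exhaustive search over all extensions of a normalised pair $(F_0,F_1)$ is the most reliable way both to rule out these cases and, for the four types in (b)--(e) that survive, to establish the uniqueness claim by showing that every U1F of the relevant type is isomorphic to the construction exhibited in the existence half.
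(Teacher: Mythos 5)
This theorem is not proved in the paper at all: it is quoted verbatim from Meszka and Rosa \cite{meszka2003perfect} as background, so there is no in-paper argument to compare your proposal against. Judged on its own terms, your plan is a faithful reconstruction of the kind of argument that result actually rests on (enumeration of admissible even-cycle types, explicit constructions for the realisable ones, and exhaustive computer search for the non-existence and uniqueness claims), and your enumeration of candidate types for each $2n\leq 16$ is correct.

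That said, as a proof it is incomplete rather than wrong. The existence half is only gestured at --- you do not write down the starter 1-factors for types $(4,4)$, $(4,6)$, $(6,6)$, $(4,4,4,4)$, and verifying uniformity of a development of a starter is exactly the step where such constructions usually fail, so it cannot be waved through. More seriously, the non-existence half carries the entire weight of the theorem (nine candidate types must be eliminated), and you do not actually exhibit a single one of the promised double-counting or local obstructions; you only assert that ``several types'' fall to them and concede that the rest, including $(8,8)$, $(4,6,6)$ and $(6,8)$, need an exhaustive search you have not performed. The uniqueness claim in the last sentence of the theorem is likewise deferred entirely to that unexecuted search. So the proposal is a sound research plan matching the known route to this result, but it does not constitute a proof until the constructions are written out and the searches (or replacement combinatorial arguments) are actually carried through.
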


In 2013, Herke and Maenhaut \cite{Herke2013P1FCirc} began the study of P1Fs of circulant graphs. The \emph{circulant graph} \(Circ(n,D)\) where \(n\) is a positive integer and \(D\) is a subset of \(\left\lbrace 1,2, \dots, \left\lfloor \frac{n}{2} \right\rfloor \right\rbrace\) is the graph with vertex set \(\mathbb{Z}_{n}\) and where two vertices \(v\) and \(u\) are adjacent if and only if \(|v-u|=d \pmod n\) for some \(d\in D\). A useful fact is that a circulant graph, \(Circ(n,\left\lbrace a,b \right\rbrace)\) is connected if and only if \(\gcd(n,a,b)=1\) (see for example \cite{Barajas_Circ_Chrom_numb}).
Herke and Maenhaut \cite{Herke2013P1FCirc} showed that, up to isomorphism, there are only two connected 3-regular circulant graphs of order $2n$, namely \(Circ(2n, \left\lbrace 1,n \right\rbrace)\) and \(Circ(2n, \left\lbrace 2,n \right\rbrace )\). They also characterised the 3-regular circulant graphs that admit a P1F. Herke \cite{SaraPhDThesis} also determined all connection sets of 3- and 4-regular circulant graphs on \(2n\) vertices that admit U1Fs for \(n \leq 14\). She also showed that any connected circulant graph isomorphic to \(Circ(2n,\left\lbrace 2,n \right\rbrace)\) admits a U1F if and only if \(n\equiv 3 \pmod {6}\), and provided a number of constructions of U1Fs of 3- and 4-regular graphs \cite{SaraPhDThesis}.

\begin{theorem}\textup{\cite{Herke2013P1FCirc}}
    Let \(n \geq 2 \) be an even integer and let \(a\) be an integer in the range \(1 \leq a < n\). Then \(Circ(2n,\left\lbrace a,n \right\rbrace)\) has a P1F if and only if one of the following cases holds true:
    \begin{itemize}
        \item \(n=2\) and \(a=1\);
        \item \(n=3\) and \(a=1\) or \(a=2\);
        \item \(n>3\), \(n\) and \( a\) are odd and \(\gcd(a,\frac{n}{2})=1\).
    \end{itemize}
\end{theorem}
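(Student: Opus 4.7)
The plan is to split into sufficiency and necessity, noting first a rigidity: in $G = Circ(2n, \{a,n\})$ every vertex has a unique neighbour at difference $n$, so the perfect matching $M = \{\{v, v+n\} : v \in \mathbb{Z}_{2n}\}$ is a 1-factor that must appear in every 1-factorisation of $G$. Consequently, the other two 1-factors $F_1,F_2$ in any 1-factorisation must decompose the 2-regular spanning subgraph $R = Circ(2n,\{a\})$, which consists of $\gcd(2n,a)$ disjoint cycles of length $2n/\gcd(2n,a)$. The small cases $n=2,3$ are exceptional enough that I would just exhibit explicit P1Fs (e.g.\ the standard P1F of $K_4$ for $n=2$ and direct constructions in the two graphs on six vertices for $n=3$), so the substantive argument is for $n > 3$.

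For necessity, since $F_1 \cup F_2 = R$ must itself be a Hamilton cycle, $R$ must be connected, giving $\gcd(2n, a) = 1$; this already forces $a$ odd and $\gcd(a,n)=1$, and pins down $F_1, F_2$ as the two alternating matchings of the cycle $R$ (up to swapping). The remaining content is to check when $M \cup F_i$ is a Hamilton cycle. I would trace the alternating walk
\[
0 \xrightarrow{M} n \xrightarrow{F_i} n + \varepsilon_1 a \xrightarrow{M} \varepsilon_1 a \xrightarrow{F_i} \varepsilon_1 a + \varepsilon_2 a \xrightarrow{M} \cdots,
\]
observing that because $F_i$ is an alternating matching along $R$, the consecutive $F_i$-steps alternate in sign, so the walk is effectively generated by the steps $+n$ and $\pm 2a$. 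Hamiltonicity of $M \cup F_i$ is then equivalent to $\langle 2a, n\rangle = \mathbb{Z}_{2n}$, and a short computation in the cyclic group rewrites this as the stated arithmetic condition involving $\gcd(a,n/2)$ together with the parity constraints.

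For sufficiency, given the hypotheses, I would take $F_1,F_2$ to be the two alternating matchings of the Hamilton cycle $R$ (so $F_1\cup F_2 = R$ is automatically a Hamilton cycle), and run the above walk analysis in the forward direction: the hypotheses guarantee $\langle 2a, n\rangle = \mathbb{Z}_{2n}$, whence both $M \cup F_1$ and $M \cup F_2$ are Hamilton cycles, and $\{M, F_1, F_2\}$ is a P1F of $G$. Necessity of the arithmetic condition follows by the same orbit calculation in reverse: if $\langle 2a, n\rangle$ is a proper subgroup, the walk closes up after exhausting only that subgroup, so $M \cup F_i$ splits into several shorter even cycles, and neither choice of alternating decomposition of $R$ can repair this.

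The main obstacle is the alternating walk analysis in the middle paragraph, specifically showing that the sign pattern $(\varepsilon_i)$ inherited from the alternating matching really does yield the group generated by $\{n, 2a\}$ (and not, say, some subgroup of it that depends on how the walk is started), and then correctly translating $\langle 2a, n\rangle = \mathbb{Z}_{2n}$ into the $\gcd$ condition as stated with the correct parity bookkeeping. Once that lemma is in place, both directions follow quickly and the small-case verifications absorb the anomalies at $n=2,3$ where the group-theoretic picture coincides with degenerate configurations.
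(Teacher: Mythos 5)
The paper does not actually prove this theorem --- it is quoted from Herke and Maenhaut \cite{Herke2013P1FCirc} --- so your argument can only be judged on its own terms, and it has a genuine gap at its foundation. Your ``rigidity'' claim, that the perfect matching $M=\left\{\{v,v+n\} : v\in\mathbb{Z}_{2n}\right\}$ must occur as a colour class in \emph{every} 1-factorisation of $Circ(2n,\{a,n\})$, is false. What is true is only that at each vertex the two $\pm a$-edges receive two distinct colours and the $n$-edge the third; but which colour plays the role of ``the third'' may vary from vertex to vertex. Concretely, in $Circ(8,\{1,4\})$ the three matchings $F_1=\{\{0,4\},\{1,2\},\{3,7\},\{5,6\}\}$, $F_2=\{\{0,1\},\{2,3\},\{4,5\},\{6,7\}\}$ and $F_3=\{\{1,5\},\{2,6\},\{3,4\},\{0,7\}\}$ form a 1-factorisation in which no factor equals $M$. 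Since your entire necessity argument is channelled through the rigidity claim --- you deduce that the other two factors must decompose $Circ(2n,\{a\})$, hence that this subgraph must be a single Hamilton cycle, hence $\gcd(2n,a)=1$, and you then only analyse unions of the form $M\cup F_i$ --- the necessity direction collapses: you have only excluded perfect 1-factorisations that happen to contain $M$ as a factor, not arbitrary ones. This is exactly why the proof in \cite{Herke2013P1FCirc} must carry out a structural analysis of arbitrary proper $3$-edge-colourings of these graphs, which is the genuinely hard half of the theorem.

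The sufficiency direction of your proposal is essentially viable: when $\gcd(2n,a)=1$ one can take $M$ together with the two alternating matchings of the Hamilton cycle $Circ(2n,\{a\})$, reduce to $a=1$ via the automorphism $v\mapsto a^{-1}v$, and observe that the alternating $M/F_i$ walk closes into a Hamilton cycle precisely when $n$ is odd (and into $4$-cycles when $n$ is even). Even there, though, your criterion ``$M\cup F_i$ is Hamiltonian iff $\langle 2a,n\rangle=\mathbb{Z}_{2n}$'' needs more justification than a subgroup computation, because the walk is not free to apply the generators in an arbitrary order --- this is the very point you flag as the main obstacle, and it is where the sign-pattern bookkeeping must actually be done. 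Finally, be aware that the statement as transcribed in the paper appears garbled (the hypothesis ``$n\geq 2$ even'' is inconsistent with the listed cases $n=3$ and ``$n$ odd'', and $\gcd(a,\frac{n}{2})$ is presumably $\gcd(a,n)$ in this normalisation), so any translation of your group-theoretic condition into the stated one should be checked against the original source rather than against this transcription.
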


For 4-regular circulant graphs, Herke gave constructions of P1Fs for a number of infinite families of bipartite 4-regular circulant graphs and ruled out the existence of P1Fs for certain families of non-bipartite 4-regular circulant graphs \cite{HerkeP1F4RegCirc}.

There have been a number of generalisations of U1Fs and P1Fs of graphs. A \emph{semiperfect} 1-factorisation of a graph is a 1-factorisation \(\mathcal{F}=\left\lbrace F_1,F_2, \dots, F_\alpha  \right\rbrace\) of an \(\alpha\)-regular graph such that \(F_1\cup F_i \) forms a Hamilton cycle for \(2 \leq i \leq \alpha\). Ehrenfeucht showed in 1973 that semiperfect 1-factorisations exist for all complete graphs of even order \cite{EhrenfeuchtSemiperfect1973}. A \emph{sequentially} uniform 1-factorisation of a graph is a 1-factorisation \(\mathcal{F}\) of an \(\alpha\)-regular graph whose 1-factors can be labelled \(F_1,F_2, \dots, F_\alpha\) such that \(F_i\cup F_{i+1} \) is isomorphic to some 2-regular graph \(H\) for all \(1 \leq i \leq \alpha\) (where \(\alpha+1 = 1\)). If \(H\) is a Hamilton cycle we call such a 1-factorisation sequentially perfect.

The goal of this paper is to introduce a new type of 1-factorisation that is a relaxation of a uniform 1-factorisation.
We say that a pair of 1-factors of a 1-factorisation of a graph has \emph{type} $\left[c_1,c_2,\dots,c_t\right]$ if their union is a collection of $t$ cycles of lengths $c_1,c_2,\dots,c_t$, and we use the notation \(\left[c_1^{n_1},c_2^{n_2}, \dots, c_t^{n_t}\right]\) for a type with \(n_i\) cycles of length \(c_i\) for each \(i\in \left\lbrace 1,2, \dots, t  \right\rbrace\).
Consider a 1-factorisation $\mathcal{F}$ of some graph and let $T_1,T_2,\dots ,T_m$ be the types of the pairs of 1-factors of $\mathcal{F}$.
Let $a_{i}$ be the number of pairs of 1-factors that are of type $T_i$.
If $a_{1}=a_{2}=\dots=a_{m}=b$ for some integer $b$, then we say that $\mathcal{F}$ is a \emph{\(m\)-balanced 1-factorisation} (\(m\)-B1F) with types \(\left(T_1,T_2,\dots ,T_m\right)\). If it is not important how many types appear, then we will just use B1F.

Clearly, \(m\)-B1Fs of \(r\)-regular graphs are only feasible when \(m\mid \binom{r}{2}\) and we note that a \(1\)-B1F is a U1F. We begin the investigation of \(m\)-B1Fs by considering a family of highly symmetric graphs with low degree, specifically, 3- and 4-regular connected circulant graphs.

\section{B1Fs of Connected 3-Regular Circulant Graphs}
\label{Subsec:B1Fs of 3-Regular Circulant Graphs}
As noted, \(m\)-B1Fs of \(r\)-regular graphs require \(m \mid \binom{r}{2}\). Thus, an \(m\)-B1F of a 3-regular graph can only exist if \(m\in \left\lbrace 1,3 \right\rbrace\). Herke \cite{Herke2013P1FCirc} has considered the problem of finding 1-B1Fs of connected 3-regular circulant graphs before, so we shall focus on the case where \(m=3\) and determine the values of \(n\) for which 3-B1Fs of connected 3-regular circulant graphs on \(2n\) vertices exist.
Herke and Maenhaut \cite{Herke2013P1FCirc} have shown that up to isomorphism there are only two connected 3-regular circulant graphs of order $2n$, $Circ(2n, \left\lbrace 1,n \right\rbrace)$ and $Circ(2n, \left\lbrace 2,n \right\rbrace)$. We completely answer the question of existence of 3-B1Fs of connected 3-regular circulant graphs with the following two theorems.

\begin{theorem} \label{Thm: Circ{n,{1,n/2}}}
    The circulant graph \(Circ(2n, \left\lbrace 1,n \right\rbrace)\) admits a 3-B1F if and only if $n=6$ or \(n \geq 8\).
\end{theorem}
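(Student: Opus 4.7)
The plan is to separate necessity from sufficiency, and in the sufficiency direction to split by the parity of $n$.

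For necessity, when $n \in \{2, 3, 4, 5\}$ a simple counting argument works: a 3-B1F needs three pairwise non-isomorphic 2-regular spanning subgraphs of $Circ(2n, \{1, n\})$, equivalently three distinct partitions of $2n$ into even parts of size at least $4$. For $n = 2, 3, 4, 5$ these partitions are only $\{[4]\}$, $\{[6]\}$, $\{[4, 4], [8]\}$, $\{[4, 6], [10]\}$ respectively, so in each case strictly fewer than three distinct types are available. The main obstacle is $n = 7$, since $2n = 14$ admits four admissible partitions $[14], [4, 10], [6, 8], [4, 4, 6]$ and a genuine structural argument is required. I would encode any candidate 1-factorisation by the colour $c(v) \in \{1, 2, 3\}$ of the type-$7$ edge at each vertex $v$: the periodicity $c(v) = c(v + 7)$ together with the requirement that at every vertex the three edge-colours are distinct forces a two-valued recurrence for the colours along the outer Hamilton cycle, where at each step one either keeps the type-$7$ colour the same or exchanges it with the current type-$1$ colour. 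A case analysis on the $14$-periodic admissible patterns, aided if necessary by Herke's enumeration in \cite{SaraPhDThesis}, should show that no such pattern produces three distinct pair-union types.

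For sufficiency when $n$ is even and $n \geq 6$, I start from the natural 1-factorisation $F_1^0 = \{(v, v+n) : v \in \mathbb{Z}_n\}$, $F_2^0 = \{(2i, 2i+1)\}$, $F_3^0 = \{(2i+1, 2i+2)\}$, and perform a single swap on the 4-cycle $\{0, 1, n+1, n\} \subseteq F_1^0 \cup F_2^0$, setting
\begin{align*}
F_1 &= (F_1^0 \setminus \{(0, n), (1, n+1)\}) \cup \{(0, 1), (n, n+1)\}, \\
F_2 &= (F_2^0 \setminus \{(0, 1), (n, n+1)\}) \cup \{(0, n), (1, n+1)\}, \\
F_3 &= F_3^0.
\end{align*}
Cycle-tracing yields pair-union types $[4^{n/2}]$, $[4^{(n-4)/2}, 8]$, and $[2n]$, which are pairwise distinct for $n \geq 6$; this handles $n = 6$ and all even $n \geq 8$.

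For $n$ odd with $n \geq 9$, I take
\[
F_1 = \{(v, v+n) : 4 \leq v \leq n-1\} \cup \{(0, 1), (2, 3), (n, n+1), (n+2, n+3)\},
\]
which uses four type-$1$ edges. Tracing shows that the complementary $2n$ edges form a disjoint union of a cycle of length $2n - 4$ and the 4-cycle on $\{1, 2, n+1, n+2\}$; each component has exactly two alternating perfect matchings, and an appropriate choice of alternation in each gives $F_2$ and $F_3$. A further trace produces pair-union types $[4^{(n-5)/2}, 10]$, $[4^{(n-3)/2}, 6]$, and $[4, 2n-4]$, which are pairwise distinct precisely when $n \geq 9$. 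Combined with the even case this yields 3-B1Fs for $n = 6$ and all $n \geq 8$, completing the proof.
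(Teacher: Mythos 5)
Your constructions are correct --- I traced both of them and they produce exactly the pair-union types you claim --- but there is one genuine gap, at $n=7$. The paper disposes of this case by exhaustively enumerating all 1-factorisations of $Circ(14,\{1,7\})$ by computer and checking that none is a 3-B1F. Your proposed replacement is only a sketch: you set up the colouring $c(v)$ of the diagonal edge at $v$ (note that $c(v)=c(v+7)$ is automatic, since $\{v,v+7\}$ and $\{v+7,v\}$ are the same edge, so this gives no extra leverage) and then assert that a case analysis of the $14$-periodic patterns ``should show'' that no pattern yields three distinct types, possibly ``aided by'' an external enumeration. As written this does not establish the nonexistence claim; you would need to actually carry out the case analysis or explicitly invoke a computational check, as the paper does. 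Everything else in the necessity direction (the counting of partitions of $2n$ into even parts $\geq 4$ for $n\leq 5$) matches the paper's argument.

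On sufficiency your even-$n$ construction is essentially the paper's: both start from the three ``natural'' 1-factors (two matchings of 1-edges and the matching of diagonals) and perform a single 4-cycle swap between the diagonal factor and one 1-edge factor, yielding the same three types $\left[2n\right]$, $\left[4^{n/2}\right]$, $\left[8,4^{(n-4)/2}\right]$. Your odd-$n$ construction, however, is genuinely different and arguably cleaner. The paper builds three factors by hand (concentrating 1-edges near two antipodal arcs) and obtains types $\left[2n-6,6\right]$, $\left[6,4^{(2n-6)/4}\right]$, $\left[8,6,4^{(2n-14)/4}\right]$, each requiring a separate trace. You instead fix one factor $F_1$ (diagonals plus four 1-edges placed so that the complementary 2-regular graph splits as a $(2n-4)$-cycle plus a 4-cycle) and let $F_2,F_3$ be the two alternating matchings of that complement; this makes the type $\left[2n-4,4\right]$ of $F_2\cup F_3$ immediate by construction, leaving only two unions to trace, and I confirmed those give $\left[10,4^{(n-5)/2}\right]$ and $\left[6,4^{(n-3)/2}\right]$ with the alternation you implicitly choose (in fact both choices of alternation yield the same multiset of types, so ``appropriate choice'' is not even needed). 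The failure of distinctness at $n=7$, where $\left[4,10\right]$ appears twice, is exactly as you say. It would strengthen the write-up to state the chosen alternation explicitly rather than leaving it as ``an appropriate choice,'' but this is a presentational issue, not a mathematical one.
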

\begin{proof}
    For \(Circ(2n, \left\lbrace 1,n \right\rbrace)\) to admit a 3-B1F, clearly there must be at least three possible types of pairs of 1-factors. The union of a pair of 1-factors will be a collection of even length cycles of length at least 4. If \(2n \leq 10\) it is impossible to have three distinct types of pairs of 1-factors of \(Circ(2n, \left\lbrace 1,n \right\rbrace)\). For \(n=7\), we enumerated all possible 1-factorisations of \(Circ(2n, \left\lbrace 1,n \right\rbrace)\) by computer and determined that none are 3-B1Fs. It remains to show that we can construct 3-B1Fs for all remaining even values of \(n\).

    We consider two separate cases: even \(n\) where \(n \geq 6 \), and odd \(n\) where \(n \geq 9 \).

    First, if \(n\) is even and \(n \geq 12 \), consider the 1-factorisation \(\mathcal{F}_\alpha = \{F_1,F_2,F_3\}\), of \(Circ(2n, \left\lbrace 1,n \right\rbrace)\) defined as follows:
    \begin{align*}
        F_1 & = \left\lbrace \left\lbrace x,x + 1\right\rbrace \, :\, 0 \leq x \leq 2n - 2,\, \text{$x$ even} \right\rbrace                                                                 \\
        F_2 & = \left\lbrace \left\lbrace x,x + 1\right\rbrace \, :\, 1 \leq x \leq 2n - 3,\,  x \neq n - 1,\, \text{$x$ odd} \right\rbrace \cup \left\lbrace \left\lbrace x, x+ n \right\rbrace\, :\, x \in \left\lbrace 0,n- 1 \right \rbrace \right\rbrace \\
        F_3 & = \left\lbrace \left\lbrace x,x + 1 \right\rbrace\, :\, x \in \left\lbrace n - 1, 2n- 1 \right \rbrace \right\rbrace       \cup \left\lbrace \left\lbrace x, x+ n \right\rbrace \, :\, 1 \leq x \leq n - 2\right\rbrace.
    \end{align*}

    To show that \(\mathcal{F}_\alpha\) is a 3-B1F we will show that $F_1\cup F_2$, $F_1\cup F_3$, and $F_2\cup F_3$ are each of different type.

    \begin{figure}[h]
        \centering
        \begin{subfigure}[ht!]{\textwidth}
            \centering
            \scalebox{0.95}{
                \begin{tikzpicture}[dot/.style={circle, draw=none, inner sep=0pt, minimum size=1pt},ddot/.style={circle,fill=black,inner sep=0pt,minimum size=8pt},
                        lbl/.style={}]% Create the nodes
                    \def\n{28} % number of nodes total
                    \def\fz{3} % number of nodes past 0
                    \def\lfz{2} % number of labels past 0
                    \def\bz{5} % number of nodes before 0
                    \def\lbz{3} % number of labels before 0
                    \def\fh{\fz} % number of nodes past n/2
                    \def\lfh{\lfz} % number of labels past n/2
                    \def\bh{\bz} % number of nodes before n/2
                    \def\lbh{\lbz} % number of labels before n/2

                    \def\radius{5cm} % radius of circle
                    \def\radiustwo{5.5cm} % radius of circle
                    % labelled Vertices from 0 forwards
                    \foreach \s in {0,...,\numexpr\lfz\relax}
                        {
                            \node[dot] (\s) at ({360/\n * (-\s)+90}:\radius) {};
                            \node[draw=none] (label\s) at ({360/\n * (-\s)+90}:\radiustwo) {\small\(\s\)};
                        }
                    % labelled Vertices from n-1 backwards
                    \foreach \s in {1,...,\numexpr\lbz\relax}
                        {
                            \node[dot] (nminus\s) at ({360/\n * (\s)+90}:\radius) {};
                            \node[draw=none] (labelnminus\s) at ({360/\n * (\s)+90}:\radiustwo) {\small\(2n-\s\)};
                        }
                    % unlabelled Vertices from 0 forwards
                    \foreach \s in {\lfz,...,\numexpr\fz+1\relax}
                        {
                            \node[dot] (\s) at ({360/\n * (-\s)+90}:\radius) {};
                        }
                    % unlabelled Vertices from n-1 backwards
                    \foreach \s in {\lbz,...,\numexpr\bz+1\relax}
                        {
                            \node[dot] (nminus\s) at ({360/\n * (\s)+90}:\radius) {};
                        }
                    % Visible Dots Vertices from 0 forwards
                    \foreach \s in {0,...,\numexpr\fz\relax}
                        {
                            \node[ddot] (disp\s) at ({360/\n * (-\s)+90}:\radius) {};
                        }
                    % Visible Dots Vertices from n-1 backwards
                    \foreach \s in {1,...,\numexpr\bz\relax}
                        {
                            \node[ddot] (disp\n-\s) at ({360/\n * (\s)+90}:\radius) {};
                        }
                    % labelled Vertices from n/2 forwards
                    \node[dot] (halfn) at ({360/\n * (0)-90}:\radius) {};
                    \node[draw=none] (labelhalfn) at ({360/\n * (0)-90}:\radiustwo) {\(n\)};
                    \foreach \s in {1,...,\numexpr\lfh\relax}
                        {
                            \node[dot] (halfnplus\s) at ({360/\n * (-\s)-90}:\radius) {};
                            \node[draw=none] (labelhalfnplus\s) at ({360/\n * (-\s)-90}:\radiustwo) {\(n+\s\)};
                        }
                    % labelled Vertices from n/2-1 backwards
                    \foreach \s in {1,...,\numexpr\lbh\relax}
                        {
                            \node[dot] (halfnminus\s) at ({360/\n * (\s)-90}:\radius) {};
                            \node[draw=none] (labelhalfnminus\s) at ({360/\n * (\s)-90}:\radiustwo) {\(n-\s\)};
                        }
                    % unlabelled Vertices from n/2 forwards
                    \foreach \s in {\lfh,...,\numexpr\fh+1\relax}
                        {
                            \node[dot] (halfnplus\s) at ({360/\n * (-\s)-90}:\radius) {};
                        }
                    % unlabelled Vertices from n/2-1 backwards
                    \foreach \s in {\lbh,...,\numexpr\bh+1\relax}
                        {
                            \node[dot] (halfnminus\s) at ({360/\n * (\s)-90}:\radius) {};
                        }
                    % Visible Dots Vertices from n/2 forwards
                    \foreach \s in {0,...,\numexpr\fh\relax}
                        {
                            \node[ddot] (disphalfnplus\s) at ({360/\n * (-\s)-90}:\radius) {};
                        }
                    % Visible Dots Vertices from n/2-1 backwards
                    \foreach \s in {1,...,\numexpr\bh\relax}
                        {
                            \node[ddot] (disphalfnminus\s) at ({360/\n * (\s)-90}:\radius) {};
                        }

                    \begin{scope}[on background layer]
                        \foreach \s/\y in  {0/1,2/3,halfn/halfnplus1,halfnplus2/halfnplus3,halfnminus2/halfnminus1,halfnminus4/halfnminus3,halfnminus6/halfnminus5,nminus2/nminus1,nminus4/nminus3,nminus6/nminus5}
                            {
                                \draw [circF1]  (\s) -- (\y);

                            }
                        %Edges of F2
                        \foreach \s/\y in {1/2,3/4,halfnminus5/halfnminus4,halfnminus3/halfnminus2,halfnminus1/nminus1,0/halfn,halfnplus1/halfnplus2,halfnplus3/halfnplus4,nminus5/nminus4,nminus3/nminus2}
                            {
                                \draw [circF2]  (\s) -- (\y);
                            }

                        %Edges of F3
                        \foreach \s/\y in {0/nminus1,halfnminus1/halfn,1/halfnplus1,2/halfnplus2,3/halfnplus3,nminus2/halfnminus2,nminus3/halfnminus3,nminus4/halfnminus4,nminus5/halfnminus5}
                            {
                                \draw [circF3]  (\s) -- (\y);

                            }
                    \end{scope}
                    \begin{scope}[node distance=1cm, every node/.style={font=\sffamily}, align=left]
                        \matrix [above right=of current bounding box.north east, yshift=-0.5cm,xshift=-0.75cm,anchor=north west, nodes={inner sep=0pt}, row sep=0.15cm] {
                        % Line for Edge type 1
                        \node [label=right:{\(F_1\)}] (legend1) {};
                        \draw [circF1] ([xshift=-1.1cm]legend1.west) -- ([xshift=-0.1cm]legend1.west);
                        \\
                        % Line for Edge type 2
                        \node [label=right:{\(F_2\)}] (legend2) {};
                        \draw [circF2] ([xshift=-1.1cm]legend2.west) -- ([xshift=-0.1cm]legend2.west);
                        \\
                        % Line for Edge type 3
                        \node [label=right:{\(F_3\)}] (legend3) {};
                        \draw [circF3] ([xshift=-1.1cm]legend3.west) -- ([xshift=-0.1cm]legend3.west);
                        \\
                        };
                    \end{scope}
                \end{tikzpicture}
            }

        \end{subfigure}
        \caption{A 3-B1F of \(Circ(2n,\left\lbrace 1,n \right\rbrace)\) for even \(n\).}\label{Fig: 0 mod 4 General Construction 3-B1F Circ(n,{1,n/2})}
    \end{figure}

    The union $F_1\cup F_2$ is of type \([2n]\) with the Hamilton cycle $$\left(0,1,2,\dots,n-1,2n-1,2n-2,\dots, n\right).$$ The union \(F_2 \cup F_3\) is of type \(\left[4^{\tfrac{2n}{4}}\right]\) with cycles \[\left(x,x+1,x+1+n,x+n\right)\text{ for }x\in \left\lbrace 1, 3, \dots, n-1  \right\rbrace.\]
    The union \({F_1\cup F_3}\) is of type \(\left[8,4^{\tfrac{2n-8}{4}}\right]\) with cycles
    \begin{align*}
        \left(0,1,n+1, n, n-1, n-2,\allowbreak 2n-2, 2n-1\right)\text{ and }\\ \left(x,x+1,x+1+n,x+n\right) \text{ for } x \in \left\lbrace 2, 4, \dots, n-4 \right\rbrace.
    \end{align*}
    Thus, $\mathcal{F}_\alpha$ is a 3-B1F with types \(\left([2n],\left[4^{\tfrac{2n}{4}}\right],\left[8,4^{\tfrac{2n-8}{4}}\right]\right)\).

    If \(n\) is odd and \(n \geq 9 \), consider the 1-factorisation \(\mathcal{F}_\beta = \{F_a,F_b, F_c\}\) defined as follows:
    \begin{align*}
        F_a & = \left\lbrace
        \left\lbrace
        x,x + 1
        \right\rbrace
        \, :\, 0 \leq x \leq n - 3,\,  \text{$x$ even}
        \right\rbrace
        \cup
        \left\lbrace
        \left\lbrace
        x,x + 1
        \right\rbrace
        \, :\, n \leq x \leq 2n - 3,\,  \text{$x$ odd}
        \right\rbrace
        \\
            & \qquad \cup
        \left\lbrace
        \left\lbrace n - 1,2n-1 \right\rbrace
        \right\rbrace
        \\
        F_b & = \left\lbrace
        \left\lbrace
        x,x + 1
        \right\rbrace
        \, :\, 1 \leq  x \leq  n-2 , \, x \neq n - 4,\,   \text{$x$ odd}
        \right\rbrace
        \\ & \qquad \cup
        \left\lbrace
        \left\lbrace
        x,x + 1
        \right\rbrace
        \, :\, n+1 \leq  x \leq 2n-2 , \, x \neq 2n - 4, \, \text{$x$ even}
        \right\rbrace
        \\ & \qquad \cup
        \left\lbrace
        \left\lbrace x,x+n \right\rbrace\,:\, x \in
        \left\lbrace
        0,
        n - 4,
        n - 3
        \right\rbrace
        \right\rbrace
        \\
        F_c & =
        \left\lbrace
        \left\lbrace x,x+1 \right\rbrace\,:\, x \in
        \left\lbrace
        n-4,n-1, 2n-4,2n-1
        \right\rbrace
        \right\rbrace                  \\
            & \qquad \cup \left\lbrace
        \left\lbrace
        x,x+n
        \right\rbrace
        \, :\, 1 \leq x \leq n - 2,\, x\not\in \{n - 4,n - 3\}
        \right\rbrace
    \end{align*}

    \begin{figure}[h]
        \centering
        \begin{subfigure}[b]{\textwidth}
            \centering
            \scalebox{0.95}{
                \begin{tikzpicture}[dot/.style={circle,draw = none, inner sep=0pt, minimum size=1pt},ddot/.style={circle,fill=black,inner sep=0pt,minimum size=8pt},
                        lbl/.style={}]% Create the nodes
                    \def\n{38} % number of nodes total
                    \def\fz{3} % number of nodes past 0
                    \def\lfz{2} % number of labels past 0
                    \def\bz{8} % number of nodes before 0
                    \def\lbz{6} % number of labels before 0
                    \def\fh{\fz} % number of nodes past n/2
                    \def\lfh{\lfz} % number of labels past n/2
                    \def\bh{\bz} % number of nodes before n/2
                    \def\lbh{\lbz} % number of labels before n/2

                    \def\radius{5cm} % radius of circle
                    \def\radiustwo{5.75cm} % radius of circle
                    % labelled Vertices from 0 forwards
                    \foreach \s in {0,...,\numexpr\lfz\relax}
                        {
                            \node[dot] (\s) at ({360/\n * (-\s)+90}:\radius) {};
                            \node[draw=none] (label\s) at ({360/\n * (-\s)+90}:\radiustwo) {\footnotesize\(\s\)};
                        }
                    % labelled Vertices from n-1 backwards
                    \foreach \s in {1,...,\numexpr\lbz\relax}
                        {
                            \node[dot] (nminus\s) at ({360/\n * (\s)+90}:\radius) {};
                            \node[draw=none] (labelnminus\s) at ({360/\n * (\s)+90}:\radiustwo) {\footnotesize \(2n-\s\)};
                        }
                    % unlabelled Vertices from 0 forwards
                    \foreach \s in {\lfz,...,\numexpr\fz+1\relax}
                        {
                            \node[dot] (\s) at ({360/\n * (-\s)+90}:\radius) {};
                        }
                    % unlabelled Vertices from n-1 backwards
                    \foreach \s in {\lbz,...,\numexpr\bz+1\relax}
                        {
                            \node[dot] (nminus\s) at ({360/\n * (\s)+90}:\radius) {};
                        }
                    % Visible Dots Vertices from 0 forwards
                    \foreach \s in {0,...,\numexpr\fz\relax}
                        {
                            \node[ddot] (disp\s) at ({360/\n * (-\s)+90}:\radius) {};
                        }
                    % Visible Dots Vertices from n-1 backwards
                    \foreach \s in {1,...,\numexpr\bz\relax}
                        {
                            \node[ddot] (disp\n-\s) at ({360/\n * (\s)+90}:\radius) {};
                        }
                    % labelled Vertices from n/2 forwards
                    \node[dot] (halfn) at ({360/\n * (0)-90}:\radius) {};
                    \node[draw=none] (labelhalfn) at ({360/\n * (0)-90}:\radiustwo) {\footnotesize\(n\)};
                    \foreach \s in {1,...,\numexpr\lfh\relax}
                        {
                            \node[dot] (halfnplus\s) at ({360/\n * (-\s)-90}:\radius) {};
                            \node[draw=none] (labelhalfnplus\s) at ({360/\n * (-\s)-90}:\radiustwo) {\footnotesize\(n+\s\)};
                        }
                    % labelled Vertices from n/2-1 backwards
                    \foreach \s in {1,...,\numexpr\lbh\relax}
                        {
                            \node[dot] (halfnminus\s) at ({360/\n * (\s)-90}:\radius) {};
                            \node[draw=none] (labelhalfnminus\s) at ({360/\n * (\s)-90}:\radiustwo) {\footnotesize\(n-\s\)};
                        }
                    % unlabelled Vertices from n/2 forwards
                    \foreach \s in {\lfh,...,\numexpr\fh+1\relax}
                        {
                            \node[dot] (halfnplus\s) at ({360/\n * (-\s)-90}:\radius) {};
                        }
                    % unlabelled Vertices from n/2-1 backwards
                    \foreach \s in {\lbh,...,\numexpr\bh+1\relax}
                        {
                            \node[dot] (halfnminus\s) at ({360/\n * (\s)-90}:\radius) {};
                        }
                    % Visible Dots Vertices from n/2 forwards
                    \foreach \s in {0,...,\numexpr\fh\relax}
                        {
                            \node[ddot] (disphalfnplus\s) at ({360/\n * (-\s)-90}:\radius) {};
                        }
                    % Visible Dots Vertices from n/2-1 backwards
                    \foreach \s in {1,...,\numexpr\bh\relax}
                        {
                            \node[ddot] (disphalfnminus\s) at ({360/\n * (\s)-90}:\radius) {};
                        }

                    \begin{scope}[on background layer]
                        \foreach \s/\y in  {nminus1/halfnminus1,0/1,2/3,halfn/halfnplus1,halfnplus2/halfnplus3,halfnminus9/halfnminus8,halfnminus7/halfnminus6,halfnminus5/halfnminus4,halfnminus3/halfnminus2,nminus9/nminus8,nminus7/nminus6,nminus5/nminus4,nminus3/nminus2}
                            {
                                \draw [circF1]  (\s) -- (\y);

                            }
                        %Edges of F2
                        \foreach \s/\y in {1/2,3/4,0/halfn,nminus3/halfnminus3,nminus4/halfnminus4,halfnminus8/halfnminus7,halfnminus6/halfnminus5,halfnminus2/halfnminus1,halfnplus1/halfnplus2,halfnplus3/halfnplus4,nminus8/nminus7,nminus6/nminus5,nminus2/nminus1}
                            {
                                \draw [circF2]  (\s) -- (\y);
                            }

                        %Edges of F3
                        \foreach \s/\y in {0/nminus1,halfnminus1/halfn,1/halfnplus1,2/halfnplus2,3/halfnplus3,nminus2/halfnminus2,nminus5/halfnminus5,nminus6/halfnminus6,nminus7/halfnminus7,nminus8/halfnminus8,nminus4/nminus3,halfnminus4/halfnminus3}
                            {
                                \draw [circF3]  (\s) -- (\y);

                            }
                    \end{scope}
                    \begin{scope}[node distance=1cm, every node/.style={font=\sffamily}, align=left]
                        \matrix [above right=of current bounding box.north east, yshift=-0.5cm,xshift=-0.25cm,anchor=north west, nodes={inner sep=0pt}, row sep=0.15cm] {
                        % Line for Edge type 1
                        \node [label=right:{\(F_a\)}] (legend1) {};
                        \draw [circF1] ([xshift=-1.1cm]legend1.west) -- ([xshift=-0.1cm]legend1.west);
                        \\
                        % Line for Edge type 2
                        \node [label=right:{\(F_b\)}] (legend2) {};
                        \draw [circF2] ([xshift=-1.1cm]legend2.west) -- ([xshift=-0.1cm]legend2.west);
                        \\
                        % Line for Edge type 3
                        \node [label=right:{\(F_c\)}] (legend3) {};
                        \draw [circF3] ([xshift=-1.1cm]legend3.west) -- ([xshift=-0.1cm]legend3.west);
                        \\
                        };
                    \end{scope}
                \end{tikzpicture}
            }

        \end{subfigure}
        \caption{A 3-B1F of \(Circ(2n,\left\lbrace 1,n \right\rbrace)\) for odd \(n\).}\label{Fig: 2 mod 4 General Construction 3-B1F Circ(n,{1,n/2})}
    \end{figure}

    The union $F_a\cup F_b$ is of type \(\left[2n-6,6\right]\)
    with cycles \begin{align*}
        \left(0,1,2,3,4,\dots, n-4,\allowbreak 2n-4, 2n-5, 2n-6, \dots , n\right) \text{ and } \\ \left(n-3,n-2,n-1,2n-1,2n-2,2n-3\right).
    \end{align*}
    The union $F_b\cup F_c$ is of type \(\left[6,4^{\tfrac{2n-6}{4} }\right]\)
    with cycles \begin{align*}\left(0,2n-1,\allowbreak 2n-2,n-2,n-1, n\right)\text{ and
        }\\ \left(x,x+1,\allowbreak x+1+n, x+n\right)\text{ for } x\in \left\lbrace 1, 3, \dots, n-4 \right\rbrace.\end{align*}
    The union $F_a\cup F_c$ is of type \(\left[8,6,4^{\tfrac{2n-14}{4} }\right]\) with cycles
    \begin{gather*}
        \left(2n-2,2n-3,2n-4,2n-5,n-5,n-4,n-3,n-2\right), \left(0,1,n+1,n,n-1,2n-1\right), \\\text{ and }
        \left(x,x+1,x+1+n,x+n\right)\text{ for } x \in \left\lbrace 2, 4, \dots, n-7  \right\rbrace.
    \end{gather*}
    Thus, $\mathcal{F}_\beta$ is a 3-B1F with types \(\left(\left[2n-6,6\right],\left[6,4^{\tfrac{2n-6}{4} }\right],\left[8,6,4^{\tfrac{2n-14}{4} }\right]\right)\).
    
    Therefore the connected 3-regular circulant graph \(Circ(2n,\left\lbrace 1,n  \right\rbrace)\) admits a 3-B1F if and only if \(n=6\) or \(n \geq 8\).
\end{proof}

\begin{theorem}\label{Thm: Circ{n,{2,n/2}}}
    The connected circulant graph \(Circ(2n, \left\lbrace 2,n \right\rbrace)\) admits a 3-B1F if and only if \(n \geq 9 \).
\end{theorem}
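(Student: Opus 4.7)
The plan is to follow the same two-pronged strategy as in Theorem~\ref{Thm: Circ{n,{1,n/2}}}: rule out small $n$ with combinatorial arguments plus a computer check, then exhibit explicit constructions for all remaining $n$. First, since $\gcd(2n,2,n)=\gcd(2,n)=1$ if and only if $n$ is odd, the graph $Circ(2n,\{2,n\})$ is 3-regular and connected precisely when $n$ is odd, so the claim is implicitly about odd $n$. For necessity, the union of any pair of 1-factors of a 3-regular graph on $2n$ vertices is a disjoint union of even cycles of length at least $4$ summing to $2n$; for $n=3$ the only such type is $[6]$, and for $n=5$ only $[10]$ and $[6,4]$ are possible, so in each case fewer than three distinct types are available and no 3-B1F can exist. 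For $n=7$, a 3-B1F would require three distinct types chosen from $\{[14],[10,4],[8,6],[6,4,4]\}$, one for each of the $\binom{3}{2}=3$ pairs of 1-factors; I would rule this out by exhaustive computer enumeration of the 1-factorisations of $Circ(14,\{2,7\})$, exactly mirroring the $n=7$ step in Theorem~\ref{Thm: Circ{n,{1,n/2}}}.

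For sufficiency I would construct, for each odd $n\geq 9$, an explicit 1-factorisation $\mathcal{F}=\{F_1,F_2,F_3\}$ whose three pairwise unions have pairwise distinct cycle types. The natural building blocks are the two odd cycles $C_{\text{ev}}=(0,2,4,\dots,2n-2)$ and $C_{\text{odd}}=(1,3,\dots,2n-1)$ formed by the difference-$2$ edges, together with the matching $M=\{\{v,v+n\}:0\leq v<n\}$ of difference-$n$ edges. Because each $n$-cycle has odd length, a parity count shows that every $F_i$ must contain an odd number of difference-$n$ ``bridges'', and the three counts sum to $n$. My plan is to take $F_1$ and $F_2$ each to consist mostly of alternating diff-$2$ edges in $C_{\text{ev}}$ and $C_{\text{odd}}$, with a small number of diff-$n$ bridges inserted at chosen positions to break the parity obstruction, and to take $F_3$ to contain most of $M$ together with the diff-$2$ edges forced by what $F_1$ and $F_2$ leave uncovered. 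By selecting the bridge positions carefully I would arrange, for instance, that $F_1\cup F_2$ is a Hamilton cycle of type $[2n]$, that $F_2\cup F_3$ is dominated by diff-$2$/diff-$n$ squares with a single longer ``defect'' cycle, and that $F_1\cup F_3$ yields a third, pairwise distinct type. Following the pattern in Theorem~\ref{Thm: Circ{n,{1,n/2}}}, I expect this construction to split into a small number of sub-cases based on the residue of $n$ modulo $4$ or $6$.

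The main obstacle is coordinating the three 1-factors so that all three pairwise unions have distinct types simultaneously. In Theorem~\ref{Thm: Circ{n,{1,n/2}}} one of the 1-factors could be chosen as the Hamilton cycle formed by all diff-$1$ edges at even starting points, but here the diff-$2$ edges form two disjoint \emph{odd} cycles, forcing every 1-factor to use diff-$n$ bridges and making it harder to control the cycle lengths in any single pairwise union. The challenge is therefore to place the small number of bridges in $F_1,F_2,F_3$ so that (i) the parity of diff-$n$ edges per 1-factor is satisfied, (ii) each pair of bridge placements interacts to produce the desired cycle type in the corresponding union, and (iii) the three types are pairwise distinct. Once the placement is found, reading off the cycle types of the three unions is routine, and the theorem follows.
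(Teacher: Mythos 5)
Your overall strategy mirrors the paper's exactly: restrict to odd $n$ via the connectivity condition, rule out $2n\leq 10$ by counting available types, dispose of $n=7$ by computer enumeration, and then give explicit constructions for all odd $n\geq 9$. The necessity half of your argument is complete and correct, and your preliminary structural observations are sound — in particular the parity count showing each 1-factor must use an odd number of difference-$n$ edges (a matching on an odd cycle covers an even number of vertices, so an odd number remain to be covered by bridges) is a genuinely useful observation that the paper does not state explicitly.

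However, the sufficiency direction — which is the entire content of the theorem for $n\geq 9$ — is not actually proved. You describe what the construction should look like ("a small number of diff-$n$ bridges inserted at chosen positions", "by selecting the bridge positions carefully I would arrange\dots") and correctly identify the coordination of the three pairwise unions as the main obstacle, but you never exhibit the 1-factors, never specify the bridge positions, and never verify the resulting cycle types. "Once the placement is found, reading off the cycle types is routine" defers precisely the step that constitutes the proof. The paper resolves this by splitting into the cases $n\equiv 1\pmod 4$ and $n\equiv 3\pmod 4$ and writing down explicit 1-factorisations whose pairwise unions have types $\bigl[10,4^{(2n-10)/4}\bigr]$, $\bigl[2n-4,4\bigr]$, and $\bigl[6,4^{(2n-6)/4}\bigr]$ in both cases; note that none of these is a Hamilton cycle, whereas your sketch aims for $F_1\cup F_2$ of type $[2n]$ — an unverified target that may or may not be convenient to realise alongside the other two constraints. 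Until concrete 1-factors are written down and their three union types are checked to be pairwise distinct for every odd $n\geq 9$, the proof has a genuine gap.
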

\begin{proof}
    If \(Circ(2n,\left\lbrace 2,n \right\rbrace)\) is connected, then \(\gcd(2n,2,n)=1\). Thus, we now only consider odd \(n\). For \(Circ(2n, \left\lbrace 2,n \right\rbrace)\) to admit a 3-B1F, clearly there must be at least three possible types of pairs of 1-factors. The union of a pair of 1-factors will be a collection of even length cycles of length at least 4. It is clear that if \(2n \leq 10\), it is impossible to have three distinct types of pairs of 1-factors of \(Circ(2n, \left\lbrace 2,n \right\rbrace)\). For \(n=7\), we enumerated all non-isomorphic 1-factorisations of \(Circ(2,\{2,n\})\) by computer and determined that none are 3-B1Fs. It remains to show that we can construct 3-B1Fs for all remaining even \(n \geq 9 \).

    We consider this in two separate cases: when \(n \equiv 1 \pmod 4\), and when \(n \equiv 3 \pmod 4\).

    First, if \(n\equiv 1 \pmod 4\), consider the 1-factorisation \(\mathcal{F}_\alpha = \{F_1,F_2,F_3\}\) of \({Circ}(2n,\{2,n\})\) defined as follows:
    \begin{align*}
        F_1 & = \left\lbrace
        \left\lbrace
        x,x+2
        \right\rbrace\, :\, 0 \leq x \leq n-4,\, x\equiv 0,1 \Mod 4
        \right\rbrace                                                                                                                                    \\
            & \qquad \cup \left\lbrace \left\lbrace x,x+2\right\rbrace\, :\,n \leq x \leq 2n-4,\, x\equiv 1,2 \Mod 4 \right\rbrace             \\
            & \qquad \cup \left\lbrace
            \left\lbrace n-1,2n-1 \right\rbrace
        \right\rbrace                                                                                                                                    \\
        F_2 & = \left\lbrace \left\lbrace x,x+2 \right\rbrace\,:\, x \in \left\lbrace n-7,n-3, 2n-7,2n-3 \right\rbrace \right\rbrace \\
            & \qquad \cup
        \left\lbrace \left\lbrace x,x+n  \right\rbrace\,:\, 0 \leq x \leq n -8
        \right\rbrace                                                                                                                                    \\
            & \qquad \cup
        \left\lbrace \left\lbrace x,x+n  \right\rbrace\,:\, x \in \left\lbrace n-6,n-4,n-2 \right\rbrace
        \right\rbrace                                                                                                                                    \\
        F_3 & = \left\lbrace
        \left\lbrace
        x,x+2
        \right\rbrace\, :\, 0 \leq x \leq n-10,\, x\equiv 2,3 \Mod 4
        \right\rbrace                                                                                                                                    \\
            & \qquad \cup \left\lbrace \left\lbrace x,x+2\right\rbrace\, :\,n \leq x \leq 2n-10,\, x\equiv 0,3 \Mod 4 \right\rbrace            \\
            & \qquad \cup \left\lbrace
        \left\lbrace x,x+2 \right\rbrace\,:\,x \in
        \left\lbrace
        n - 6,
        n - 2,
        n - 1,
        2n- 6,
        2n- 2,
        2n- 1
        \right\rbrace
        \right\rbrace                                                                                                                                    \\
            & \qquad \cup \left\lbrace
        \left\lbrace x,x+n \right\rbrace\,:\,x \in
        \left\lbrace
        n - 7,
        n - 5,
        n - 3
        \right\rbrace
        \right\rbrace .
    \end{align*}

    \begin{figure}[hb!]
        \centering
        \begin{subfigure}[b]{\textwidth}
            \centering
            \scalebox{0.95}{
                \begin{tikzpicture}[dot/.style={draw=none,circle, inner sep=0pt, minimum size=1pt},ddot/.style={circle,fill=black,inner sep=0pt,minimum size=8pt},
                        lbl/.style={}]% Create the nodes
                    \def\n{46} % number of nodes total
                    \def\fz{5} % number of nodes past 0
                    \def\lfz{3} % number of labels past 0
                    \def\bz{9} % number of nodes before 0
                    \def\lbz{8} % number of labels before 0
                    \def\fh{\fz} % number of nodes past n/2
                    \def\lfh{\lfz} % number of labels past n/2
                    \def\bh{\bz} % number of nodes before n/2
                    \def\lbh{\lbz} % number of labels before n/2

                    \def\radius{6cm} % radius of circle
                    \def\radiustwo{6.7cm} % radius of circle
                    % labelled Vertices from 0 forwards
                    \foreach \s in {0,...,\numexpr\lfz\relax}
                        {
                            \node[dot] (\s) at ({360/\n * (-\s)+90}:\radius) {};
                            \node[draw=none] (label\s) at ({360/\n * (-\s)+90}:\radiustwo) {\small\(\s\)};
                        }
                    % labelled Vertices from n-1 backwards
                    \foreach \s in {1,...,\numexpr\lbz\relax}
                        {
                            \node[dot] (nminus\s) at ({360/\n * (\s)+90}:\radius) {};
                            \node[draw=none] (labelnminus\s) at ({360/\n * (\s)+90}:\radiustwo) {\small\(2n-\s\)};
                        }
                    % unlabelled Vertices from 0 forwards
                    \foreach \s in {\lfz,...,\numexpr\fz+2\relax}
                        {
                            \node[dot] (\s) at ({360/\n * (-\s)+90}:\radius) {};
                        }
                    % unlabelled Vertices from n-1 backwards
                    \foreach \s in {\lbz,...,\numexpr\bz+2\relax}
                        {
                            \node[dot] (nminus\s) at ({360/\n * (\s)+90}:\radius) {};
                        }
                    % Visible Dots Vertices from 0 forwards
                    \foreach \s in {0,...,\numexpr\fz\relax}
                        {
                            \node[ddot] (disp\s) at ({360/\n * (-\s)+90}:\radius) {};
                        }
                    % Visible Dots Vertices from n-1 backwards
                    \foreach \s in {1,...,\numexpr\bz\relax}
                        {
                            \node[ddot] (disp\n-\s) at ({360/\n * (\s)+90}:\radius) {};
                        }
                    % labelled Vertices from n/2 forwards
                    \node[dot] (halfn) at ({360/\n * (0)-90}:\radius) {};
                    \node[draw=none] (labelhalfn) at ({360/\n * (0)-90}:\radiustwo) {\small\(n\)};
                    \foreach \s in {1,...,\numexpr\lfh\relax}
                        {
                            \node[dot] (halfnplus\s) at ({360/\n * (-\s)-90}:\radius) {};
                            \node[draw=none] (labelhalfnplus\s) at ({360/\n * (-\s)-90}:\radiustwo) {\small\(n+\s\)};
                        }
                    % labelled Vertices from n/2-1 backwards
                    \foreach \s in {1,...,\numexpr\lbh\relax}
                        {
                            \node[dot] (halfnminus\s) at ({360/\n * (\s)-90}:\radius) {};
                            \node[draw=none] (labelhalfnminus\s) at ({360/\n * (\s)-90}:\radiustwo) {\small\(n-\s\)};
                        }
                    % unlabelled Vertices from n/2 forwards
                    \foreach \s in {\lfh,...,\numexpr\fh+2\relax}
                        {
                            \node[dot] (halfnplus\s) at ({360/\n * (-\s)-90}:\radius) {};
                        }
                    % unlabelled Vertices from n/2-1 backwards
                    \foreach \s in {\lbh,...,\numexpr\bh+2\relax}
                        {
                            \node[dot] (halfnminus\s) at ({360/\n * (\s)-90}:\radius) {};
                        }
                    % Visible Dots Vertices from n/2 forwards
                    \foreach \s in {0,...,\numexpr\fh\relax}
                        {
                            \node[ddot] (disphalfnplus\s) at ({360/\n * (-\s)-90}:\radius) {};
                        }
                    % Visible Dots Vertices from n/2-1 backwards
                    \foreach \s in {1,...,\numexpr\bh\relax}
                        {
                            \node[ddot] (disphalfnminus\s) at ({360/\n * (\s)-90}:\radius) {};
                        }

                    \begin{scope}[on background layer]
                        %halfEdges of F1
                        \foreach \s/\y in  {nminus1/halfnminus1}
                            {
                                \draw [circF1]  (\s) -- (\y);

                            }
                        %2Edges of F1
                        \foreach \s/\y in  {0/2,1/3,4/6,5/7,halfnminus9/halfnminus7,halfnminus8/halfnminus6,halfnminus5/halfnminus3,halfnminus4/halfnminus2,halfn/halfnplus2,halfnplus1/halfnplus3,halfnplus4/halfnplus6,halfnplus5/halfnplus7,nminus9/nminus7,nminus8/nminus6,nminus5/nminus3,nminus4/nminus2}
                            {
                                \draw [circF1]  (\s) to[bend left=-45] (\y);

                            }
                        %halfEdges of  F2
                        \foreach \s/\y in {0/halfn,1/halfnplus1,2/halfnplus2,3/halfnplus3,4/halfnplus4,5/halfnplus5, nminus2/halfnminus2,nminus4/halfnminus4,nminus6/halfnminus6,nminus8/halfnminus8,nminus9/halfnminus9}
                            {
                                \draw [circF2]  (\s) -- (\y);
                            }
                        %2Edges of F2
                        \foreach \s/\y in  {halfnminus7/halfnminus5,halfnminus3/halfnminus1,nminus7/nminus5,nminus3/nminus1}
                            {
                                \draw [circF2]  (\s) to[bend left=-45] (\y);

                            }
                        %halfEdges of  F3
                        \foreach \s/\y in {nminus7/halfnminus7,nminus5/halfnminus5,nminus3/halfnminus3}
                            {
                                \draw [circF3]  (\s) -- (\y);

                            }
                        %2Edges of F3
                        \foreach \s/\y in  {nminus11/nminus9,nminus10/nminus8,nminus6/nminus4,nminus2/0,nminus1/1,2/4,3/5,halfnminus11/halfnminus9,halfnminus10/halfnminus8,halfnminus6/halfnminus4,halfnminus2/halfn,halfnminus1/halfnplus1,halfnplus2/halfnplus4,halfnplus3/halfnplus5}
                            {
                                \draw [circF3]  (\s) to[bend left=-45] (\y);

                            }
                    \end{scope}
                    \begin{scope}[node distance=1cm, every node/.style={font=\sffamily}, align=left]
                        \matrix [above right=of current bounding box.north east, yshift=-0.5cm,xshift=-0.25cm,anchor=north west, nodes={inner sep=0pt}, row sep=0.15cm] {
                        % Line for Edge type 1
                        \node [label=right:{\(F_1\)}] (legend1) {};
                        \draw [circF1] ([xshift=-1.1cm]legend1.west) -- ([xshift=-0.1cm]legend1.west);
                        \\
                        % Line for Edge type 2
                        \node [label=right:{\(F_2\)}] (legend2) {};
                        \draw [circF2] ([xshift=-1.1cm]legend2.west) -- ([xshift=-0.1cm]legend2.west);
                        \\
                        % Line for Edge type 3
                        \node [label=right:{\(F_3\)}] (legend3) {};
                        \draw [circF3] ([xshift=-1.1cm]legend3.west) -- ([xshift=-0.1cm]legend3.west);
                        \\
                        };
                    \end{scope}
                \end{tikzpicture}
            }
        \end{subfigure}
        \caption{Construction of 3-B1F of \(Circ(2n,\left\lbrace 2,n \right\rbrace)\) for \(n\equiv 1 \pmod 4\).}
        \label{Fig: 2 mod 8 General Construction 3-B1F Circ(n,{2,n/2})}
    \end{figure}
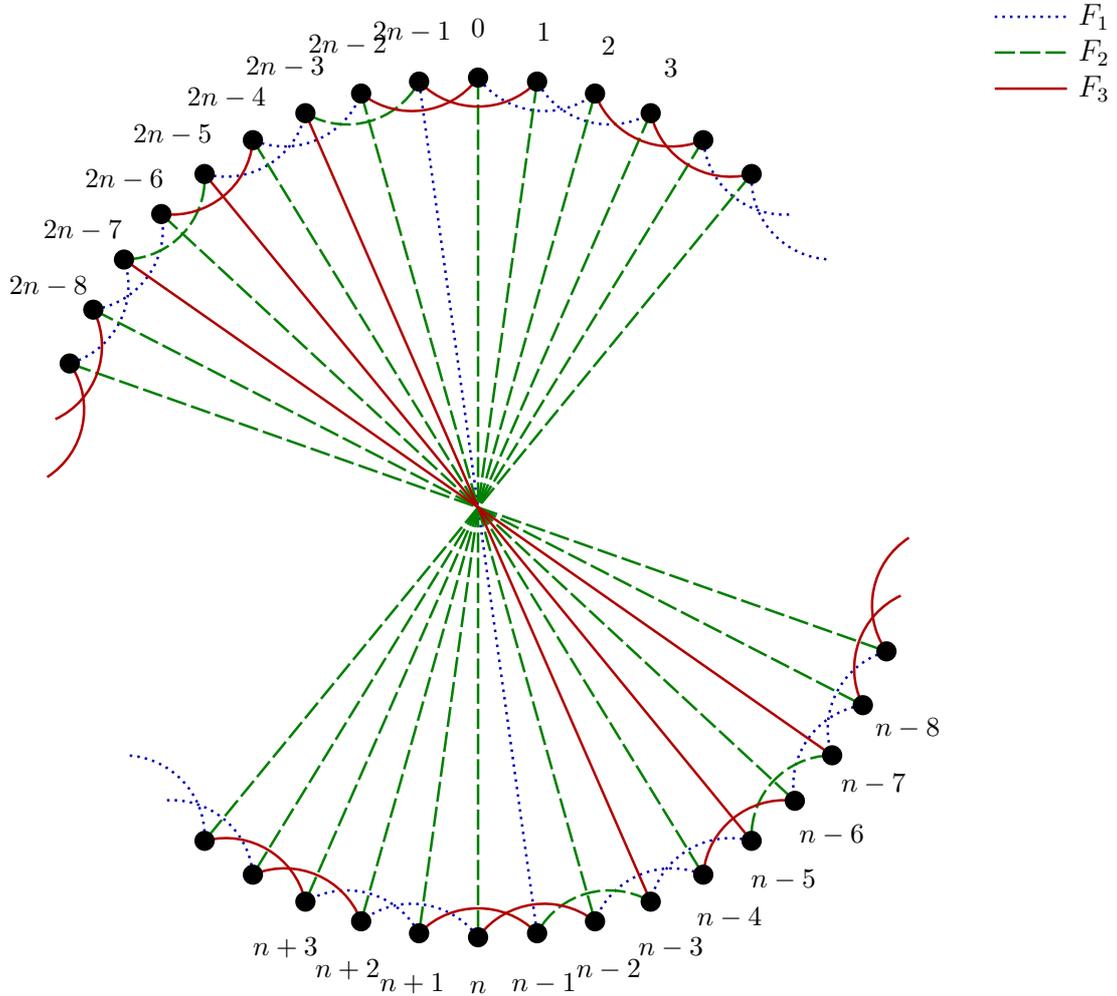
    To show that \(\mathcal{F}_\alpha\) is a 3-B1F it suffices to show that $F_1\cup F_2$, $F_1\cup F_3$, and $F_2\cup F_3$ are all non-isomorphic graphs.

    The union $F_1 \cup F_2$ is of type \(\left[10,4^{\tfrac{2n-10}{4} }\right]\) with cycles
    \begin{gather*}
        \left(2n-1, 2n-3,2n-5,2n-7, 2n-9, n-9, n-7, n-5, n-3, n-1\right)\text{ and }\\
        \left( x,x+2,x+2+n,x+n\right)\text{ for } x \in \left\lbrace 0, 4, \dots, n-13 \right\rbrace \cup \left\lbrace 1, 5, \dots, n-4  \right\rbrace.
    \end{gather*}
    The union $F_1\cup F_3$ is of type \(\left[2n-4,4\right]\) with cycles
    \begin{gather*}
        \left(0,2,4,\dots, n-5, n-7,2n-7,2n-9,2n-11,\dots, 1,2n-1, n-1, n+1, n+3,\dots, 2n-2 \right)\\
        \text{ and }       \left(2n-5,2n-3,n-3,n-5\right).
    \end{gather*}
    The union $F_2\cup F_3$ is of type \(\left[6,4^{\tfrac{2n-6}{4} }\right]\) with cycles
    \begin{gather*}
        \left(1, 2n-1, 2n-3, n-3, n-1,n+1 \right)\text{ and }\\
        \left(x,x+2,n+x+2,n+x\right)\text{ for } x \in \left\lbrace 2,6, \dots, n-7  \right\rbrace \cup \left\lbrace 3,7, \dots, n-2  \right\rbrace.
    \end{gather*}

    Thus, $\mathcal{F}_\alpha$ is a 3-B1F with types \(\left(\left[10,4^{\tfrac{2n-10}{4} }\right],\left[2n-4,4\right],\left[6,4^{\tfrac{2n-6}{4} }\right]\right)\).

    If \(n\equiv 3 \pmod 4\), consider the 1-factorisation \(\mathcal{F}_\beta = \{F_a,F_b,F_c\}\) of \({Circ}(2n,\{2,n\})\) defined as follows:
    \begin{align*}
        F_a & = \left\lbrace
        \left\lbrace
        x,x+2
        \right\rbrace\, :\, 0 \leq x \leq n-6,\, x\equiv 0,1 \Mod 4
        \right\rbrace                                                                                                                                    \\
            & \qquad \cup \left\lbrace \left\lbrace x,x+2\right\rbrace\, :\,n \leq x \leq 2n-6,\, x\equiv 0,3 \Mod 4 \right\rbrace             \\
            & \qquad \cup \left\lbrace
        \left\lbrace x,x+2 \right\rbrace\,:\,x \in
        \left\lbrace
        n - 3,
        2n-3
        \right\rbrace
        \right\rbrace                                                                                                                                    \\
            & \qquad \cup \left\lbrace
            \left\lbrace n-2, 2n-2 \right\rbrace
        \right\rbrace                                                                                                                                    \\
        F_b & = \left\lbrace \left\lbrace x,x+2 \right\rbrace\,:\, x \in \left\lbrace n-8,n-4, 2n-8,2n-4 \right\rbrace \right\rbrace \\
            & \qquad \cup
        \left\lbrace \left\lbrace x,x+n  \right\rbrace\,:\, 0 \leq x \leq n -9
        \right\rbrace                                                                                                                                    \\
            & \qquad \cup
        \left\lbrace \left\lbrace x,x+n  \right\rbrace\,:\, x \in \left\lbrace n-7,n-5,n-3 ,n-1 \right\rbrace
        \right\rbrace                                                                                                                                    \\
        F_c & = \left\lbrace
        \left\lbrace
        x,x+2
        \right\rbrace\, :\, 0 \leq x \leq n-12,\, x\equiv 2,3 \Mod 4
        \right\rbrace                                                                                                                                    \\
            & \qquad \cup \left\lbrace \left\lbrace x,x+2\right\rbrace\, :\,n \leq x \leq 2n-12,\, x\equiv 1,2 \Mod 4 \right\rbrace            \\
            & \qquad \cup \left\lbrace
        \left\lbrace x,x+2 \right\rbrace\,:\,x \in
        \left\lbrace
        n - 9,
        n - 5,
        n - 2,
        n - 1,
        2n- 9,
        2n- 5,
        2n- 2,
        2n- 1
        \right\rbrace
        \right\rbrace                                                                                                                                    \\
            & \qquad \cup \left\lbrace
        \left\lbrace x,x+n \right\rbrace\,:\,x \in
        \left\lbrace
        n - 8,
        n - 6,
        n - 4
        \right\rbrace
        \right\rbrace .
    \end{align*}

    \begin{figure}[ht!]
        \centering
        \begin{subfigure}[b]{\textwidth}
            \centering
            \scalebox{0.95}{
                \begin{tikzpicture}[dot/.style={draw=none,circle, inner sep=0pt, minimum size=1pt},ddot/.style={circle,fill=black,inner sep=0pt,minimum size=8pt},
                        lbl/.style={}]% Create the nodes
                    \def\n{46} % number of nodes total
                    \def\fz{5} % number of nodes past 0
                    \def\lfz{3} % number of labels past 0
                    \def\bz{11} % number of nodes before 0
                    \def\lbz{7} % number of labels before 0
                    \def\fh{\fz} % number of nodes past n/2
                    \def\lfh{\lfz} % number of labels past n/2
                    \def\bh{\bz} % number of nodes before n/2
                    \def\lbh{\lbz} % number of labels before n/2

                    \def\radius{6cm} % radius of circle
                    \def\radiustwo{6.7cm} % radius of circle
                    % labelled Vertices from 0 forwards
                    \foreach \s in {0,...,\numexpr\lfz\relax}
                        {
                            \node[dot] (\s) at ({360/\n * (-\s)+90}:\radius) {};
                            \node[draw=none] (label\s) at ({360/\n * (-\s)+90}:\radiustwo) {\small\(\s\)};
                        }
                    % labelled Vertices from n-1 backwards
                    \foreach \s in {1,...,\numexpr\lbz\relax}
                        {
                            \node[dot] (nminus\s) at ({360/\n * (\s)+90}:\radius) {};
                            \node[draw=none] (labelnminus\s) at ({360/\n * (\s)+90}:\radiustwo) {\small\(2n-\s\)};
                        }
                    % unlabelled Vertices from 0 forwards
                    \foreach \s in {\lfz,...,\numexpr\fz+2\relax}
                        {
                            \node[dot] (\s) at ({360/\n * (-\s)+90}:\radius) {};
                        }
                    % unlabelled Vertices from n-1 backwards
                    \foreach \s in {\lbz,...,\numexpr\bz+2\relax}
                        {
                            \node[dot] (nminus\s) at ({360/\n * (\s)+90}:\radius) {};
                        }
                    % Visible Dots Vertices from 0 forwards
                    \foreach \s in {0,...,\numexpr\fz\relax}
                        {
                            \node[ddot] (disp\s) at ({360/\n * (-\s)+90}:\radius) {};
                        }
                    % Visible Dots Vertices from n-1 backwards
                    \foreach \s in {1,...,\numexpr\bz\relax}
                        {
                            \node[ddot] (disp\n-\s) at ({360/\n * (\s)+90}:\radius) {};
                        }
                    % labelled Vertices from n/2 forwards
                    \node[dot] (halfn) at ({360/\n * (0)-90}:\radius) {};
                    \node[draw=none] (labelhalfn) at ({360/\n * (0)-90}:\radiustwo) {\small\(n\)};
                    \foreach \s in {1,...,\numexpr\lfh\relax}
                        {
                            \node[dot] (halfnplus\s) at ({360/\n * (-\s)-90}:\radius) {};
                            \node[draw=none] (labelhalfnplus\s) at ({360/\n * (-\s)-90}:\radiustwo) {\small\(n+\s\)};
                        }
                    % labelled Vertices from n/2-1 backwards
                    \foreach \s in {1,...,\numexpr\lbh\relax}
                        {
                            \node[dot] (halfnminus\s) at ({360/\n * (\s)-90}:\radius) {};
                            \node[draw=none] (labelhalfnminus\s) at ({360/\n * (\s)-90}:\radiustwo) {\small\(n-\s\)};
                        }
                    % unlabelled Vertices from n/2 forwards
                    \foreach \s in {\lfh,...,\numexpr\fh+2\relax}
                        {
                            \node[dot] (halfnplus\s) at ({360/\n * (-\s)-90}:\radius) {};
                        }
                    % unlabelled Vertices from n/2-1 backwards
                    \foreach \s in {\lbh,...,\numexpr\bh+2\relax}
                        {
                            \node[dot] (halfnminus\s) at ({360/\n * (\s)-90}:\radius) {};
                        }
                    % Visible Dots Vertices from n/2 forwards
                    \foreach \s in {0,...,\numexpr\fh\relax}
                        {
                            \node[ddot] (disphalfnplus\s) at ({360/\n * (-\s)-90}:\radius) {};
                        }
                    % Visible Dots Vertices from n/2-1 backwards
                    \foreach \s in {1,...,\numexpr\bh\relax}
                        {
                            \node[ddot] (disphalfnminus\s) at ({360/\n * (\s)-90}:\radius) {};
                        }

                    \begin{scope}[on background layer]
                        %halfEdges of F1
                        \foreach \s/\y in  {nminus2/halfnminus2}
                            {
                                \draw [circF1]  (\s) -- (\y);

                            }
                        %2Edges of F1
                        \foreach \s/\y in  {0/2,1/3,4/6,5/7, halfnminus11/halfnminus9,halfnminus10/halfnminus8,halfnminus7/halfnminus5,halfnminus6/halfnminus4,halfnminus3/halfnminus1,halfn/halfnplus2,halfnplus1/halfnplus3,halfnplus4/halfnplus6,halfnplus5/halfnplus7,nminus11/nminus9,nminus10/nminus8,nminus7/nminus5,nminus6/nminus4,nminus3/nminus1}
                            {
                                \draw [circF1]  (\s) to[bend left=-45] (\y);

                            }
                        %halfEdges of  F2
                        \foreach \s/\y in {0/halfn,1/halfnplus1,2/halfnplus2,3/halfnplus3,4/halfnplus4,5/halfnplus5, nminus1/halfnminus1,nminus3/halfnminus3,nminus5/halfnminus5,nminus7/halfnminus7,nminus9/halfnminus9,nminus10/halfnminus10,nminus11/halfnminus11}
                            {
                                \draw [circF2]  (\s) -- (\y);
                            }
                        %2Edges of F2
                        \foreach \s/\y in  {halfnminus8/halfnminus6,halfnminus4/halfnminus2,nminus8/nminus6,nminus4/nminus2}
                            {
                                \draw [circF2]  (\s) to[bend left=-45] (\y);

                            }
                        %halfEdges of  F3
                        \foreach \s/\y in {nminus8/halfnminus8,nminus6/halfnminus6,nminus4/halfnminus4}
                            {
                                \draw [circF3]  (\s) -- (\y);

                            }
                        % %2Edges of F3
                        \foreach \s/\y in  {nminus13/nminus11,nminus12/nminus10,nminus9/nminus7,nminus5/nminus3,nminus2/0,nminus1/1,2/4,3/5,halfnminus13/halfnminus11,halfnminus12/halfnminus10,halfnminus9/halfnminus7,halfnminus5/halfnminus3,halfnminus2/halfn,halfnminus1/halfnplus1,halfnplus2/halfnplus4,halfnplus3/halfnplus5}
                            {
                                \draw [circF3]  (\s) to[bend left=-45] (\y);

                            }
                    \end{scope}
                    \begin{scope}[node distance=1cm, every node/.style={font=\sffamily}, align=left]
                        \matrix [above right=of current bounding box.north east, yshift=-0.5cm,xshift=-0.25cm,anchor=north west, nodes={inner sep=0pt}, row sep=0.15cm] {
                        % Line for Edge type 1
                        \node [label=right:{\(F_a\)}] (legend1) {};
                        \draw [circF1] ([xshift=-1.1cm]legend1.west) -- ([xshift=-0.1cm]legend1.west);
                        \\
                        % Line for Edge type 2
                        \node [label=right:{\(F_b\)}] (legend2) {};
                        \draw [circF2] ([xshift=-1.1cm]legend2.west) -- ([xshift=-0.1cm]legend2.west);
                        \\
                        % Line for Edge type 3
                        \node [label=right:{\(F_c\)}] (legend3) {};
                        \draw [circF3] ([xshift=-1.1cm]legend3.west) -- ([xshift=-0.1cm]legend3.west);
                        \\
                        };
                    \end{scope}
                \end{tikzpicture}
            }
        \end{subfigure}
        \caption{Construction of 3-B1F of \(Circ(2n,\left\lbrace 2,n \right\rbrace)\) for \(n\equiv 3 \pmod 4\).}
        \label{Fig: 6 mod 8 General Construction 3-B1F Circ(n,{2,n/2})}
    \end{figure}

    Similar to before, we will show that \(\mathcal{F}_\beta\) is a 3-B1F by showing that the union of each pair of 1-factors is of a different type.

    The union $F_a\cup F_b$ is of type \(\left[10,4^{\tfrac{2n-10}{4} }\right]\)
    with cycles
    \begin{gather*}
        \left(2n-2,2n-4,2n-6,2n-8,2n-10, n-10, n-8, n-6, n-4,n-2 \right)\text{ and }\\
        \left(x,x+2,n+x+2,n+x\right) \text{ for } x\in \left\lbrace 0, 4, \dots, n-3  \right\rbrace \cup \left\lbrace 1, 5, \dots n-14  \right\rbrace.
    \end{gather*}
    The union $F_a\cup F_c$ is of type \(\left[2n-4,4\right]\) with cycles
    \begin{gather*}
        \left(0, 2, 4, \dots, 2n-8, n-8, n-10, \dots, 1, 2n-1, 2n-3, \dots, n-2, 2n-\right)\text{ and }\\
        \left(2n-6,2n-4,n-4,n-6\right).
    \end{gather*}
    The union $F_b\cup F_c$ is of type \(\left[6,4^{\tfrac{2n-6}{4} }\right]\)
    with cycles
    \begin{gather*}
        \left(0,2n-2,2n-4,n-4,n-2, n\right)\text{ and } \\ \left(x,x+2, x+2+n, x+n\right)\text{ for } x\in \left\lbrace 2, 6, \dots, n-1 \right\rbrace \cup \left\lbrace 3, 7, \dots, n-8  \right\rbrace.
    \end{gather*}
    Thus, $\mathcal{F}_\beta$ is a 3-B1F with types \(\left(\left[10,4^{\tfrac{2n-10}{4} }\right],\left[2n-4,4\right],\left[6,4^{\tfrac{2n-6}{4} }\right]\right)\).

    Therefore the connected 3-regular circulant graph, \(Circ(2n,\left\lbrace 2,n  \right\rbrace)\), admits a 3-B1F if and only if \(n \geq 9\).
\end{proof}

\section{B1Fs of Connected 4-Regular Circulant Graphs}
\label{Subsec:B1Fs of 4-Regular Circulant Graphs}
Note that an \(m\)-B1F of a 4-regular graph requires \(m\in \left\lbrace 1,2,3,6 \right\rbrace\).
We will first consider circulant graphs with connection set \(\left\lbrace 1,2 \right\rbrace\) as we will see that this connection set makes 1-factorisations quite restricted. Our work in \ref{Subsubsec:B1Fs of Circ(n,{1,2})} together with Herke's work in \cite{SaraPhDThesis} allows us to completely characterise the 4-regular circulant graphs with connection set \(\left\lbrace 1,2 \right\rbrace\) that admit an \(m\)-B1F (see Theorem \ref{Thm: m-B1Fs of Circ(n,{1,2}) existence}). Afterwards we look towards finding constructions of families of B1Fs of 4-regular circulant graphs with different connection sets.

\subsection{B1Fs of \(\mathbf{Circ(2n,\left\lbrace 1,2 \right\rbrace)}\)}
\label{Subsubsec:B1Fs of Circ(n,{1,2})}
Herke \cite{SaraPhDThesis} showed that \(Circ(2n,\left\lbrace 1,2 \right\rbrace)\) admits a 1-B1F if and only if \(n\in \left\lbrace  2,3 \right\rbrace\), so we will focus on \(m\)-B1Fs where \(m\in \left\lbrace  2,3,6 \right\rbrace\).
We first need to introduce a number of definitions.
A \emph{\(1\)-edge} (\emph{\(2\)-edge}) of \(Circ(2n,\left\lbrace 1,2 \right\rbrace)\) is an edge of the form $\left\{v,v+1\right\}$ \(\left(\left\lbrace v,v+2 \right\rbrace\right)\) where addition is performed modulo \(2n\).
Further, the \emph{out edge} of a vertex $v$ is the edge \(\left\lbrace v,v+\epsilon \right\rbrace\) for some positive integer \(\epsilon\), similarly the \emph{in edge} of a vertex \(v\) is the edge \(\left\lbrace v-\epsilon,v \right\rbrace\) for some positive integer \(\epsilon\).
We also adapt the ideas of \(k\)-configurations from \cite{Herke2013P1FCirc}. The \emph{configuration} \(C_e\) of the 1-edge \(e=\left\lbrace v,v+1 \right\rbrace\) is the set of edges \(\left\lbrace \left\lbrace v-1,v+1 \right\rbrace, \left\lbrace v,v+1 \right\rbrace,\left\lbrace v,v+2 \right\rbrace \right\rbrace\). Intuitively, if we imagine the vertices of \(Circ(2n,\left\lbrace  1,2 \right\rbrace)\) arranged along a circle in the natural order, then \(C_e\) is the set of all the edges that intersect with an imagined line bisecting \(e\). Given some edge colouring of the edges of \(Circ(2n,\left\lbrace 1,2 \right\rbrace)\), a configuration of a 1-edge is a \emph{\(k\)-configuration} if \(k\) distinct colours are used on its edges.
The \emph{out edges} (\emph{in edges}) of a configuration \(C_e\) are the out edges (in edges) of the vertices of \(e\). We say that two configurations \(C_e\) and \(C_{e'}\) are adjacent if the edges \(e\) and \(e'\) share a vertex.

To set up the observations and lemmas that follow, we let \(\mathcal{F}=\left\lbrace R,G,B,Y \right\rbrace\) be an arbitrary 1-factorisation of \(Circ(2n,\left\lbrace 1,2 \right\rbrace)\) for even \(n \geq  3\), and we colour the edges of the 1-factors \(R,G,B,Y\) with red, green, blue, and yellow, respectively.

\begin{observation}
    The 1-edges of \(\mathcal{F}\) must be 2-configurations or 3-configurations. Further, the 2-edges of a 2-configuration must be the same colour.
\end{observation}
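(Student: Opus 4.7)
The plan is to rule out a 1-configuration by a vertex-incidence argument, and then use the same argument to pin down what a 2-configuration must look like. The key is simply that any two edges of a configuration that share a common vertex must lie in different 1-factors, and hence carry different colours.

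First I would unpack $C_e$ for $e=\{v,v+1\}$, noting that it consists of the 1-edge $\{v,v+1\}$ together with the two 2-edges $\{v-1,v+1\}$ and $\{v,v+2\}$. I would then observe the two incidences that drive everything: the 1-edge $\{v,v+1\}$ shares the vertex $v$ with the 2-edge $\{v,v+2\}$, and shares the vertex $v+1$ with the 2-edge $\{v-1,v+1\}$. Since $\mathcal{F}$ is a 1-factorisation, each vertex lies in exactly one edge of each colour, so these two pairs of edges must receive different colours. This immediately shows that the 1-edge $\{v,v+1\}$ has a colour distinct from both 2-edges, so at least two colours occur on $C_e$; a 1-configuration is therefore impossible. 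Since $|C_e|=3$, the upper bound of three colours is trivial, giving the first claim.

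For the second claim, suppose $C_e$ is a 2-configuration, so exactly two colours appear. By the argument above the 1-edge $\{v,v+1\}$ has one of these two colours, and the two 2-edges $\{v-1,v+1\}$ and $\{v,v+2\}$ each have a colour different from that of the 1-edge. Since only one other colour is available, both 2-edges must carry that colour, proving the ``same colour'' conclusion.

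No step should pose real difficulty here: the whole observation rests on the fact that the three edges of $C_e$ form a path-like incidence pattern ($\{v-1,v+1\}$--$\{v,v+1\}$--$\{v,v+2\}$ glued at $v+1$ and $v$), so the only relevant proof device is the proper-edge-colouring constraint at these two shared vertices. I would keep the write-up to a few sentences and possibly indicate the incidence pattern explicitly to make the pigeonhole step on the second claim entirely transparent.
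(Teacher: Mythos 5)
Your proof is correct and is exactly the intended justification: the paper states this as an Observation without an explicit proof, and the vertex-incidence argument you give (the 1-edge meets each 2-edge at a vertex, forcing distinct colours, while the two 2-edges are vertex-disjoint and so may share the remaining colour) is the standard reasoning the paper implicitly relies on.
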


\begin{lemma}\label{Lem: 2-configurations surround 3-configurations}
    If \(\mathcal{F}\) contains a 3-configuration \(C_e\), then the two configurations that are adjacent to \(C_e\) will be 2-configurations. Further, the 1-edges of these 2-configurations will be the same colour.
\end{lemma}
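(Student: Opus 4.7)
The plan is to set up coordinates around the 1-edge $e = \{v, v+1\}$ and exploit the 1-factorisation constraint that the four colours appear exactly once at every vertex. Label the colours on $C_e$ so that $e$ has colour $X$, the 2-edge $\{v-1, v+1\}$ has colour $Y$, and the 2-edge $\{v, v+2\}$ has colour $Z$; being a 3-configuration, $X, Y, Z$ are distinct. Let $W$ denote the remaining fourth colour. The two configurations adjacent to $C_e$ are $C_{\{v-1,v\}}$ and $C_{\{v+1,v+2\}}$, and by the reflective symmetry of the situation it suffices to treat the first of these in detail.

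Next I would focus on vertex $v$. Its four incident edges $\{v-2, v\}$, $\{v-1, v\}$, $\{v, v+1\}$, $\{v, v+2\}$ must carry the four colours $W, X, Y, Z$ bijectively. Two of these are pre-determined, so $\{v-1, v\}$ and $\{v-2, v\}$ together bear $\{Y, W\}$ in some order. The one substantive step is to rule out colour $Y$ for $\{v-1, v\}$: the 2-edge $\{v-1, v+1\}$ already has colour $Y$ and is incident to vertex $v-1$, so $\{v-1, v\}$ cannot also be $Y$. Therefore $\{v-1, v\}$ has colour $W$ and $\{v-2, v\}$ has colour $Y$.

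Feeding these assignments back into $C_{\{v-1,v\}} = \{\{v-2,v\}, \{v-1,v\}, \{v-1, v+1\}\}$ gives a triple of colours $Y, W, Y$ using only two distinct colours, so $C_{\{v-1,v\}}$ is a 2-configuration whose 1-edge has colour $W$. Running the mirror-image argument at vertex $v+1$ (with $Z$ in place of $Y$) shows analogously that $\{v+1, v+2\}$ must receive colour $W$ and that $C_{\{v+1,v+2\}}$ is a 2-configuration. Both adjacent 1-edges then share colour $W$, yielding the claim.

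The proof is essentially a finite case analysis and poses no real conceptual difficulty; the only subtlety is recognising that a 3-configuration $C_e$ forces each of its 2-edges to \emph{repel} one of the two candidate colours on the adjacent 1-edge at the shared vertex ($Y$ at $v-1$ and $Z$ at $v+2$), which is what propagates colour $W$ outward symmetrically on both sides.
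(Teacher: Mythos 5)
Your proof is correct and follows essentially the same route as the paper: fix the three colours on the 3-configuration, use the 1-factorisation condition at the two endpoints of $e$ to see that the adjacent 1-edge must avoid the colour of the 2-edge meeting it at the far vertex, and conclude that both adjacent 1-edges receive the fourth colour, making both adjacent configurations 2-configurations. The only difference is cosmetic: you work with abstract colour labels $X,Y,Z,W$ where the paper fixes a concrete WLOG colouring.
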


\begin{figure}[ht!]
    \begin{center}
        \begin{tikzpicture}[dot/.style={circle, fill=black, inner sep=0pt, minimum size=1pt},ddot/.style={circle,fill=black,inner sep=0pt,minimum size=10pt},
                lbl/.style={font=\Large\bfseries} ]
            % Radius of the circle
            \def\radius{5}
            % Starting angle for the nodes
            \def\startAngle{90}
            % Angle increment for each node
            \def\angleInc{20}

            % Nodes
            \node[dot,label={[lbl,label distance=2mm]\startAngle+\angleInc*2.5:n-1}] (noden) at (\startAngle+\angleInc*2.5:\radius) {};

            \node[dot,label={[lbl,label distance=2mm]\startAngle+\angleInc*1.5:0}] (node0) at (\startAngle+\angleInc*1.5:\radius) {};

            \node[dot,label={[lbl,label distance=2mm]\startAngle+\angleInc*0.5:1}] (node1) at (\startAngle+\angleInc*0.5:\radius) {};

            \node[dot,label={[lbl,label distance=2mm]\startAngle-\angleInc*0.5:2}] (node2) at (\startAngle-\angleInc*0.5:\radius) {};

            \node[dot,label={[lbl,label distance=2mm]\startAngle-\angleInc*1.5:3}] (node3) at (\startAngle-\angleInc*1.5:\radius) {};

            \node[dot,label={[lbl,label distance=2mm]\startAngle-\angleInc*2.5:4}] (node4) at (\startAngle-\angleInc*2.5:\radius) {};

            \node[ddot] (dispdnoden) at (\startAngle+\angleInc*2.5:\radius) {};

            \node[ddot] (dispdnode0) at (\startAngle+\angleInc*1.5:\radius) {};

            \node[ddot] (dispdnode1) at (\startAngle+\angleInc*0.5:\radius) {};

            \node[ddot] (dispdnode2) at (\startAngle-\angleInc*0.5:\radius) {};

            \node[ddot] (dispdnode3) at (\startAngle-\angleInc*1.5:\radius) {};

            \node[ddot] (dispdnode4) at (\startAngle-\angleInc*2.5:\radius) {};
            \begin{scope}[on background layer]
                % Edges
                \draw [circ_darkyellow] (node0) -- (node1);
                \draw [circ_darkred] (node1) -- (node2);
                \draw [circ_darkblue] (noden) to [bend right=15] (node1);
                \draw [circ_darkblue] (node0) to [bend right=15] (node2);
                \draw [circ_darkyellow] (node2) -- (node3);
                \draw [circ_darkgreen] (node1) to [bend right=15] (node3);
                \draw [circ_darkgreen] (node2) to [bend right=15] (node4);

            \end{scope}
        \end{tikzpicture}
        \caption{The adjacent configurations of a 3-configuration.}\label{Fig: 3-config neighbours}
    \end{center}
\end{figure}
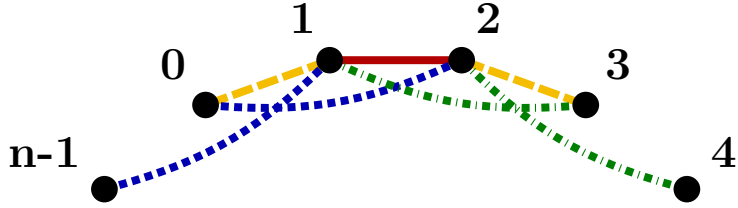

\begin{proof}
    Without loss of generality, we can assume that the configuration \(C_{\left\lbrace 1,2 \right\rbrace}\) is a 3-configuration with edges \(\left\lbrace 0,2 \right\rbrace,\left\lbrace 1,2 \right\rbrace,\left\lbrace 1,3 \right\rbrace\) coloured blue, red, and green, respectively (see Figure \ref{Fig: 3-config neighbours}).
    Since the colours represent a 1-factorisation, the edge \(\left\lbrace 0,1 \right\rbrace\) must be yellow and the edge \(\left\lbrace n-1,1 \right\rbrace\) must be blue. Thus, the configuration \(C_{\left\lbrace 0,1 \right\rbrace}\) must be a 2-configuration.
    Similarly, \(\left\lbrace 2,3 \right\rbrace\) must be yellow and \(\left\lbrace 2,4 \right\rbrace\) must be green, making the configuration \(C_{\left\lbrace 2,3 \right\rbrace}\) a 2-configuration. Thus, the adjacent configurations of a 3-configurations are 2-configurations with their 1-edges having the same colour.
\end{proof}

\begin{lemma}
    If \(\mathcal{F}\) contains a 2-configuration \(C_e\), then the configurations that are adjacent to \(C_e\) will be 3-configurations.
\end{lemma}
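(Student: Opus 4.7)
The plan is to mirror the argument of Lemma \ref{Lem: 2-configurations surround 3-configurations}, working locally around a 2-configuration and using the fact that $\mathcal{F}$ consists of only four colours, so the Latin-square-like constraints at each vertex force the adjacent configurations to use a third colour.

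First I would fix notation by rotating so that the 2-configuration is $C_{\{1,2\}}$, with edges $\{0,2\}$, $\{1,2\}$, $\{1,3\}$. By the preceding observation, the two 2-edges $\{0,2\}$ and $\{1,3\}$ share a colour; say both are blue, and say $\{1,2\}$ is red. The configurations adjacent to $C_{\{1,2\}}$ are $C_{\{0,1\}}$ and $C_{\{2,3\}}$, since these are the only 1-edges sharing a vertex with $\{1,2\}$ other than $\{1,2\}$ itself.

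Next I would analyse $C_{\{0,1\}}$, whose edges are $\{-1,1\}$, $\{0,1\}$, $\{0,2\}$. The edge $\{0,2\}$ is blue. At vertex $1$, the edges $\{1,2\}$ and $\{1,3\}$ already use red and blue, so the two remaining edges incident to $1$, namely $\{-1,1\}$ and $\{0,1\}$, must carry the two remaining colours (green and yellow). In particular, $\{-1,1\}$ and $\{0,1\}$ are distinct from each other and from blue, so the three edges of $C_{\{0,1\}}$ use three distinct colours, making it a 3-configuration. The identical argument applied at vertex $2$ (where $\{0,2\}$ and $\{1,2\}$ already occupy blue and red) shows that $\{2,3\}$ and $\{2,4\}$ must be the remaining two colours, so $C_{\{2,3\}}$ is also a 3-configuration.

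There is essentially no hard step here: the whole proof is a colour-count at the two endpoints of $e$, combined with the fact that the 2-edges of the given 2-configuration are already forced to share a colour. The only thing worth being careful about is making sure both adjacent configurations exist and are genuinely distinct from $C_e$, which is immediate because the 1-edges of $Circ(2n,\{1,2\})$ incident to vertex $1$ are $\{0,1\}$ and $\{1,2\}$, and those incident to vertex $2$ are $\{1,2\}$ and $\{2,3\}$.
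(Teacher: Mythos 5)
Your proposal is correct and follows essentially the same route as the paper: fix the 2-configuration as $C_{\{1,2\}}$ with its two 2-edges sharing a colour, then observe that the two unclassified edges of each adjacent configuration meet at an endpoint of $e$ whose other two incident edges already occupy two colours, forcing the remaining two colours and hence a 3-configuration. The only cosmetic difference is that you argue explicitly at both vertices while the paper treats vertex $2$ and appeals to symmetry for the other side.
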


\begin{figure}[ht!]
    \begin{center}
        \begin{tikzpicture}[dot/.style={circle, fill=black, inner sep=0pt, minimum size=1pt},ddot/.style={circle,fill=black,inner sep=0pt,minimum size=10pt},
                lbl/.style={font=\Large\bfseries} ]
            % Radius of the circle
            \def\radius{5}
            % Starting angle for the nodes
            \def\startAngle{90}
            % Angle increment for each node
            \def\angleInc{20}

            % Nodes

            \node[dot,label={[lbl,label distance=2mm]\startAngle+\angleInc*1.5:0}] (node0) at (\startAngle+\angleInc*1.5:\radius) {};

            \node[dot,label={[lbl,label distance=2mm]\startAngle+\angleInc*0.5:1}] (node1) at (\startAngle+\angleInc*0.5:\radius) {};

            \node[dot,label={[lbl,label distance=2mm]\startAngle-\angleInc*0.5:2}] (node2) at (\startAngle-\angleInc*0.5:\radius) {};

            \node[dot,label={[lbl,label distance=2mm]\startAngle-\angleInc*1.5:3}] (node3) at (\startAngle-\angleInc*1.5:\radius) {};

            \node[dot,label={[lbl,label distance=2mm]\startAngle-\angleInc*2.5:4}] (node4) at (\startAngle-\angleInc*2.5:\radius) {};

            \node[ddot] (dispDnode0) at (\startAngle+\angleInc*1.5:\radius) {};

            \node[ddot] (dispDnode1) at (\startAngle+\angleInc*0.5:\radius) {};

            \node[ddot] (dispDnode2) at (\startAngle-\angleInc*0.5:\radius) {};

            \node[ddot] (dispDnode3) at (\startAngle-\angleInc*1.5:\radius) {};

            \node[ddot] (dispDnode4) at (\startAngle-\angleInc*2.5:\radius) {};
            \begin{scope}[on background layer]
                % Edges
                \draw [darkgreen, dash dot, line width=3pt] (node1) -- (node2);
                \draw [darkred, line width=3pt] (node0) to [bend right=15] (node2);
                \draw [gray,dash pattern=on 8pt off 2pt, line width=3pt] (node2) -- (node3);
                \draw [darkred, line width=3pt] (node1) to [bend right=15] (node3);
                \draw [gray, dash pattern=on 8pt off 2pt, line width=3pt] (node2) to [bend right=15] (node4);

            \end{scope}
        \end{tikzpicture}
        \caption{The adjacent configuration of a 2-configuration.}\label{Fig: 2-config neighbours}
    \end{center}
\end{figure}
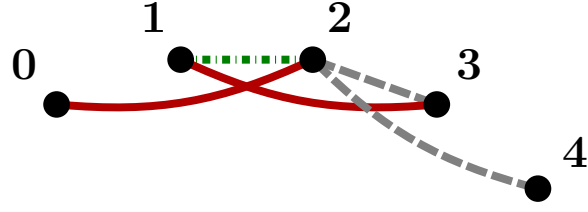

\begin{proof}
    Without loss of generality, we can assume that the configuration \(C_{\left\lbrace 1,2 \right\rbrace}\) is a 2-configuration with edges \( \left\lbrace 0,2 \right\rbrace,\left\lbrace 1,2 \right\rbrace,\left\lbrace 1,3 \right\rbrace \) coloured red, green, and red, respectively (see Figure \ref{Fig: 2-config neighbours}).
    Consider the configuration \(C_{\left\lbrace 2,3 \right\rbrace}\). Clearly the edges \(\left\lbrace 2,3 \right\rbrace \) and \(\left\lbrace 2,4 \right\rbrace\) must have different colours, neither of which can be red, so \(C_{\left\lbrace 2,3 \right\rbrace}\) must be a 3-configuration. A similar argument shows \(C_{\left\lbrace 0,1 \right\rbrace}\) is also a 3-configuration.
\end{proof}

\begin{corollary} \label{alternating configs}
    \(\mathcal{F}\) must contain alternating 2-configurations and 3-configurations.
\end{corollary}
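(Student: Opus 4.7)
The plan is to read the corollary as a direct consequence of the two preceding lemmas applied along the natural cyclic sequence of 1-edges of $Circ(2n,\{1,2\})$. First I would observe that the 1-edges of $Circ(2n,\{1,2\})$ are precisely the edges $e_v=\{v,v+1\}$ for $v\in\mathbb{Z}_{2n}$, and that these form a Hamilton cycle of the underlying graph. In particular, each $e_v$ shares the vertex $v+1$ with $e_{v+1}$, so the configurations $C_{e_v}$ and $C_{e_{v+1}}$ are adjacent in the sense defined just before the observation. This gives a cyclic sequence
\[
C_{e_0},\ C_{e_1},\ C_{e_2},\ \dots,\ C_{e_{2n-1}}
\]
in which every consecutive pair (including the wrap-around pair $C_{e_{2n-1}},C_{e_0}$) is adjacent.

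Next I would invoke the observation to classify each $C_{e_v}$ as either a 2-configuration or a 3-configuration, and then apply the two preceding lemmas: Lemma on 3-configurations says that a configuration adjacent to a 3-configuration must be a 2-configuration, while the Lemma on 2-configurations says that a configuration adjacent to a 2-configuration must be a 3-configuration. Together these two lemmas say exactly that the type of $C_{e_v}$ differs from the type of $C_{e_{v+1}}$ for every $v$. Walking around the cyclic sequence above, the types must therefore strictly alternate, which is what the corollary asserts.

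The only point worth noting is consistency with the cycle length: since the sequence has length $2n$, which is even, alternating assignments of the two types are compatible with the wrap-around and no contradiction arises. So there is no real obstacle here — the corollary is essentially a one-line deduction. The main thing to get right in writing is to articulate clearly that the cyclic structure of the 1-edges along $\mathbb{Z}_{2n}$ is what lets the local statements of the two lemmas be chained into a global alternation.
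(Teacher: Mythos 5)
Your proposal is correct and matches the paper's (implicit) reasoning: the corollary is stated without proof precisely because it follows by chaining the two preceding lemmas around the cyclic sequence of the $2n$ adjacent 1-edge configurations, exactly as you describe. Your remark that the even length $2n$ of this cycle makes the alternation consistent is the only detail worth recording, and you have it.
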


\begin{figure}[ht!]
    \centering
    \begin{subfigure}[b]{0.4\textwidth}
        \begin{center}
            \definecolor{darkyellow}{rgb}{0.960938, 0.742188, 0}
            \definecolor{darkred}{rgb}{0.7, 0, 0}
            \definecolor{darkgreen}{rgb}{0, 0.5, 0}
            \definecolor{darkblue}{rgb}{0, 0, 0.7}
            \begin{tikzpicture}
                % Radius of the circle
                \def\radius{4}
                % Starting angle for the nodes
                \def\startAngle{90}
                % Angle increment for each node
                \def\angleInc{25}

                % Connection Nodes
                \node[circle,fill=black,inner sep=0pt,minimum size=1pt] (node0) at (\startAngle+\angleInc*1.5:\radius) {};
                \node[circle,fill=black,inner sep=0pt,minimum size=1pt] (node1) at (\startAngle+\angleInc*0.5:\radius) {};
                \node[circle,fill=black,inner sep=0pt,minimum size=1pt] (node2) at (\startAngle-\angleInc*0.5:\radius) {};
                \node[circle,fill=black,inner sep=0pt,minimum size=1pt] (node3) at (\startAngle-\angleInc*1.5:\radius) {};
                \node[circle,fill=black,inner sep=0pt,minimum size=1pt] (node4) at (\startAngle-\angleInc*2.5:\radius) {};
                % Drawn Nodes
                \node[circle,fill=black,inner sep=0pt,minimum size=10pt] (Drawnode0) at (\startAngle+\angleInc*1.5:\radius) {};
                \node[circle,fill=black,inner sep=0pt,minimum size=10pt] (Drawnode1) at (\startAngle+\angleInc*0.5:\radius) {};
                \node[circle,fill=black,inner sep=0pt,minimum size=10pt] (Drawnode2) at (\startAngle-\angleInc*0.5:\radius) {};
                \node[circle,fill=black,inner sep=0pt,minimum size=10pt] (Drawnode3) at (\startAngle-\angleInc*1.5:\radius) {};
                \node[circle,fill=black,inner sep=0pt,minimum size=10pt] (Drawnode4) at (\startAngle-\angleInc*2.5:\radius) {};
                \begin{scope}[on background layer]
                    % Edges
                    \draw [darkyellow, line width=3pt] (node1) -- (node2);
                    \draw [darkblue, dotted, line width=3pt] (node0) to [bend right=15] (node2);
                    \draw [yellow, line width=8pt,draw opacity = 0.75] (node1) to [bend right=15] (node3);
                    \draw [yellow, line width=8pt,draw opacity = 0.75] (node2) to [bend right=15] (node4);
                    \draw [darkblue, dotted, line width=3pt] (node1) to [bend right=15] (node3);
                    \draw [darkred, dash pattern=on 8pt off 2pt, line width=3pt] (node2) to [bend right=15] (node4);
                \end{scope}
            \end{tikzpicture}
            \caption{Out edges of a 2-configuration.}
        \end{center}
    \end{subfigure}
    \hfil
    \begin{subfigure}[b]{0.4\textwidth}
        \begin{center}
            \begin{tikzpicture}
                % Radius of the circle
                \def\radius{4}
                % Starting angle for the nodes
                \def\startAngle{90}
                % Angle increment for each node
                \def\angleInc{25}

                % Connection Nodes
                \node[circle,fill=black,inner sep=0pt,minimum size=1pt] (node0) at (\startAngle+\angleInc*1.5:\radius) {};
                \node[circle,fill=black,inner sep=0pt,minimum size=1pt] (node1) at (\startAngle+\angleInc*0.5:\radius) {};
                \node[circle,fill=black,inner sep=0pt,minimum size=1pt] (node2) at (\startAngle-\angleInc*0.5:\radius) {};
                \node[circle,fill=black,inner sep=0pt,minimum size=1pt] (node3) at (\startAngle-\angleInc*1.5:\radius) {};
                \node[circle,fill=black,inner sep=0pt,minimum size=1pt] (node4) at (\startAngle-\angleInc*2.5:\radius) {};
                % Drawn Nodes
                \node[circle,fill=black,inner sep=0pt,minimum size=10pt] (Drawnode0) at (\startAngle+\angleInc*1.5:\radius) {};
                \node[circle,fill=black,inner sep=0pt,minimum size=10pt] (Drawnode1) at (\startAngle+\angleInc*0.5:\radius) {};
                \node[circle,fill=black,inner sep=0pt,minimum size=10pt] (Drawnode2) at (\startAngle-\angleInc*0.5:\radius) {};
                \node[circle,fill=black,inner sep=0pt,minimum size=10pt] (Drawnode3) at (\startAngle-\angleInc*1.5:\radius) {};
                \node[circle,fill=black,inner sep=0pt,minimum size=10pt] (Drawnode4) at (\startAngle-\angleInc*2.5:\radius) {};

                \begin{scope}[on background layer]
                    % Edges
                    \draw [darkred, line width=3pt] (node1) -- (node2);
                    \draw [darkblue, dotted, line width=3pt] (node0) to [bend right=15] (node2);
                    \draw [yellow, line width=8pt,draw opacity = 0.75] (node1) to [bend right=15] (node3);
                    \draw [yellow, line width=8pt,draw opacity = 0.75] (node2) to [bend right=15] (node4);
                    \draw [darkgreen, dash pattern=on 8pt off 2pt, line width=3pt] (node1) to [bend right=15] (node3);
                    \draw [darkgreen, dash pattern=on 8pt off 2pt, line width=3pt] (node2) to [bend right=15] (node4);
                \end{scope}
            \end{tikzpicture}
            \caption{Out edges of a 3-configuration.}
        \end{center}
    \end{subfigure}
    \caption{The out 2-edges of the two possible configuration types.}
    \label{Fig:out edges of configs}
\end{figure}
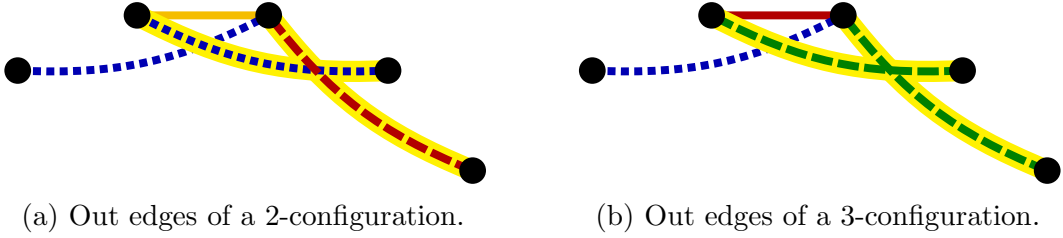

\begin{observation} \label{Obs: Config outedges}
    The out 2-edges of a 2-configuration are different colours, and the out 2-edges of a 3-configuration are the same colour. (See Figure \ref{Fig:out edges of configs} for an example.)
\end{observation}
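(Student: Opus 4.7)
The plan is to reduce the claim to the structure of the adjacent configuration. Without loss of generality take $e=\{v,v+1\}$, so that $C_e=\{\{v-1,v+1\},\{v,v+1\},\{v,v+2\}\}$. The four out edges of the two vertices of $e$ are $\{v,v+1\}$, $\{v,v+2\}$, $\{v+1,v+2\}$, and $\{v+1,v+3\}$, so the two out 2-edges of $C_e$ are exactly
\[
\{v,v+2\}\quad\text{and}\quad \{v+1,v+3\}.
\]
The key observation I would highlight is that these are precisely the two 2-edges of the adjacent configuration $C_{\{v+1,v+2\}}=\{\{v,v+2\},\{v+1,v+2\},\{v+1,v+3\}\}$.

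Given this, the rest is a short case split driven by Corollary \ref{alternating configs}. If $C_e$ is a 2-configuration, then $C_{\{v+1,v+2\}}$ is a 3-configuration, so all three of its edges receive distinct colours of $\mathcal{F}$; in particular, its two 2-edges --- which are the out 2-edges of $C_e$ --- are coloured differently. If instead $C_e$ is a 3-configuration, then $C_{\{v+1,v+2\}}$ is a 2-configuration, and by the first observation of this section the two 2-edges of any 2-configuration share a colour; hence the out 2-edges of $C_e$ likewise share a colour.

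The only substantive step is the identification of the out 2-edges of $C_e$ with the 2-edges of the neighbouring configuration $C_{\{v+1,v+2\}}$; once that coincidence is noted, the claim follows immediately from the preceding observation on the colouring of 2-edges within a 2-configuration together with Corollary \ref{alternating configs}. There is no genuine obstacle, as no new combinatorial input is required beyond what has already been established.
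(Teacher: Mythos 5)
Your argument is correct. The paper states this as an observation without a written proof, and your reasoning is exactly the justification it leaves implicit: the out 2-edges of \(C_{\{v,v+1\}}\) are precisely the 2-edges of the adjacent configuration \(C_{\{v+1,v+2\}}\), which by the alternation of configuration types is a 3-configuration (giving distinct colours) when \(C_{\{v,v+1\}}\) is a 2-configuration, and a 2-configuration (whose 2-edges share a colour, by the first observation of the section) when \(C_{\{v,v+1\}}\) is a 3-configuration.
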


Combining Corollary \ref{alternating configs} and Observation \ref{Obs: Config outedges} we have:
\begin{observation} \label{2-edge in pairs}
    There are an even number of 2-edges belonging to each 1-factor of \(\mathcal{F}\).
\end{observation}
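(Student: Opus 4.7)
The plan is to combine Corollary \ref{alternating configs} and Observation \ref{Obs: Config outedges} to pair the $2n$ 2-edges of $Circ(2n,\{1,2\})$ into $n$ monochromatic pairs; the parity conclusion then follows at once.

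First I would index the 2-edges by their smaller endpoint, setting $e_v=\{v,v+2\}$ for $v\in\mathbb{Z}_{2n}$, and note that the two out 2-edges of the configuration $C_{\{v,v+1\}}$ are precisely $e_v$ (the out 2-edge at $v$) and $e_{v+1}$ (the out 2-edge at $v+1$). Observation \ref{Obs: Config outedges} then says that $e_v$ and $e_{v+1}$ belong to the same 1-factor exactly when $C_{\{v,v+1\}}$ is a 3-configuration, and to distinct 1-factors exactly when $C_{\{v,v+1\}}$ is a 2-configuration.

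Next I would invoke Corollary \ref{alternating configs}: the types of the configurations $C_{\{v,v+1\}}$ strictly alternate as $v$ runs through $\mathbb{Z}_{2n}$, and because $2n$ is even this alternation is globally consistent. After a cyclic relabelling of the vertex set if necessary, one may take the 3-configurations to be exactly those $C_{\{v,v+1\}}$ with $v$ even, which forces $e_{2k}$ and $e_{2k+1}$ to lie in a common 1-factor for every $k\in\{0,\ldots,n-1\}$. The $2n$ 2-edges therefore partition into the $n$ monochromatic pairs $\{e_{2k},e_{2k+1}\}$, and since every such pair contributes either $0$ or $2$ edges to any fixed 1-factor, each of $R$, $G$, $B$, $Y$ contains an even number of 2-edges.

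There is no substantive obstacle here, as both ingredients have already been proved in this section and the argument reduces to reading off a single pairing on $\mathbb{Z}_{2n}$. The only point that warrants care is confirming that alternation around a cycle of even length $2n$ yields a well-defined 2-colouring of positions, so that the monochromatic pairs $\{e_{2k},e_{2k+1}\}$ actually tile the full set of 2-edges.
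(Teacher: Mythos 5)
Your proposal is correct and follows exactly the route the paper intends: the paper states this observation with no written proof beyond ``Combining Corollary \ref{alternating configs} and Observation \ref{Obs: Config outedges} we have:'', and your argument is precisely that combination, with the details of the pairing of the $2n$ 2-edges into $n$ monochromatic pairs (one per 3-configuration) spelled out. No gaps.
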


\begin{lemma} \label{yellow factor is base edges}
    At least one 1-factor of \(\mathcal{F}\) must consist entirely of 1-edges.
\end{lemma}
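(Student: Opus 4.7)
The plan is to track the colour \(\ell_v\) of the 1-edge \(\{v,v+1\}\) and the colour \(c_v\) of the 2-edge \(\{v,v+2\}\), with indices taken modulo \(2n\). Because the four edges incident to any vertex \(v\)---namely \(\{v-1,v\}\), \(\{v,v+1\}\), \(\{v-2,v\}\), and \(\{v,v+2\}\)---must between them carry all four colours of \(\mathcal{F}\), I obtain the key identity
\[
\{\ell_{v-1},\ell_v\}=\{R,G,B,Y\}\setminus\{c_{v-2},c_v\}\qquad\text{for every }v.
\]

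By Corollary~\ref{alternating configs} the configurations alternate around the cycle, so after a cyclic relabelling of vertices I may assume the 2-configurations are precisely the \(C_{\{2k,2k+1\}}\); the defining property of a 2-configuration then forces \(c_{2k-1}=c_{2k}\) for each \(k\). Writing \(d_k := c_{2k-1}=c_{2k}\), a short computation shows that \(c_{2k-2}=d_{k-1}\), \(c_{2k}=d_k\), and \(c_{2k+2}=d_{k+1}\); hence the right-hand side of the key identity at both \(v=2k+1\) and \(v=2k+2\) is \(\{R,G,B,Y\}\setminus\{d_k,d_{k+1}\}\).

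The heart of the argument is the collapse that follows: these two instances of the identity give \(\{\ell_{2k},\ell_{2k+1}\}=\{\ell_{2k+1},\ell_{2k+2}\}\); since adjacent 1-edges share a vertex, \(\ell_{2k}\neq\ell_{2k+1}\neq\ell_{2k+2}\), and so the equality of these two-element sets forces \(\ell_{2k}=\ell_{2k+2}\). Iterating cyclically, all of \(\ell_0,\ell_2,\dots,\ell_{2n-2}\) equal a single colour \(e\).

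Finally, the edges \(\{0,1\},\{2,3\},\dots,\{2n-2,2n-1\}\) form a perfect matching on the \(2n\) vertices, so these \(n\) 1-edges already exhaust an entire 1-factor; since they are all coloured \(e\), the 1-factor of colour \(e\) consists precisely of these 1-edges, as required. The main subtlety I anticipate is justifying the initial cyclic relabelling (aligning the 2-configurations with even 1-edges); once that is in place, the rest is the two-line collapse described above.
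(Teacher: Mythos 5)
Your proof is correct and follows essentially the same route as the paper: use the alternation of 2- and 3-configurations (Corollary \ref{alternating configs}) to propagate a single colour along the 1-edges of all the 2-configurations, then note that these \(n\) edges form a perfect matching and therefore exhaust that colour's 1-factor. The only difference is that where the paper cites Lemma \ref{Lem: 2-configurations surround 3-configurations} for the step \(\ell_{2k}=\ell_{2k+2}\), you re-derive it from the fact that the four edges at each vertex carry all four colours; this is a valid, slightly more self-contained rendering of the same argument.
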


\begin{proof}
    Using Corollary \ref{alternating configs} we know that \(\mathcal{F}\) must contain an alternating sequence of 2-configurations and 3-configurations. From Lemma \ref{Lem: 2-configurations surround 3-configurations} it follows that the 1-edges of the 2-configurations must all be the same colour.
\end{proof}

We can now, without loss of generality, let \(Y\) be the 1-factor consisting only of 1-edges corresponding to 2-configurations.

\begin{lemma} \label{Yellow union is Ham}
    The union of Y with any of R, G, or B, is a Hamilton cycle.
\end{lemma}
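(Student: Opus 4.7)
The plan is to analyse $Y \cup R$ by contracting each $Y$-pair and showing the resulting graph is the $n$-cycle; by symmetry the same argument then handles $G$ and $B$. Using Corollary~\ref{alternating configs} and Lemma~\ref{yellow factor is base edges} I would first fix coordinates so that the 2-configurations sit on the 1-edges $\{2i,2i+1\}$ (so $Y = \{\{2i,2i+1\} : 0 \leq i < n\}$) and the 3-configurations on $\{2i+1,2i+2\}$. Let $c_i \in \{R,G,B\}$ denote the common colour of the two 2-edges $\{2i-1,2i+1\}$ and $\{2i,2i+2\}$ of $C_{\{2i,2i+1\}}$. Since the 3-configuration $C_{\{2i+1,2i+2\}}$ also contains these two 2-edges and must see three distinct colours, $c_i \neq c_{i+1}$ for all $i$, and its 1-edge $\{2i+1,2i+2\}$ receives the third colour in $\{R,G,B\} \setminus \{c_i, c_{i+1}\}$.

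Set $P_i = \{2i, 2i+1\}$. Each vertex has a unique red incident edge, and since no red edge is a $Y$-edge, both red edges at $P_i$ leave the pair; by the circulant structure each lands in either $P_{i-1}$ or $P_{i+1}$. The key lemma I would establish is that from each $P_i$ exactly one red edge goes to $P_{i+1}$ and one to $P_{i-1}$. This follows from a short dichotomy on whether $c_i = R$: when $c_i = R$, the red edges at $2i$ and $2i+1$ are $\{2i,2i+2\}$ and $\{2i-1,2i+1\}$, one pointing forward and one backward; when $c_i \neq R$, the red edge at $2i$ is either $\{2i-2,2i\}$ (if $c_{i-1} = R$) or the 1-edge $\{2i-1,2i\}$ (if $c_{i-1}, c_i \neq R$), both of which are backward, while the red edge at $2i+1$ is then forced to point forward.

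Therefore, contracting $Y$-pairs turns $Y \cup R$ into the $n$-cycle $P_0 P_1 \cdots P_{n-1}$. Lifting back through the $Y$-edge inside each pair then glues everything into a single cycle of the form $y_0 x_0 y_1 x_1 \cdots y_{n-1} x_{n-1} y_0$ of length $2n$, where $x_i \in P_i$ is the vertex sending its red edge to $P_{i+1}$ and $y_i \in P_i$ the other; this is a Hamilton cycle. The only nontrivial step is the forward/backward bookkeeping, but it is clean once one records that the three non-yellow edges at $2i$ have colours $c_{i-1}$, the third colour of $C_{\{2i-1,2i\}}$, and $c_i$ (and analogously at $2i+1$), after which the dichotomy $c_i = R$ vs.\ $c_i \neq R$ immediately forces the one-forward-one-backward split.
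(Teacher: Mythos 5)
Your argument is correct, and every step checks out: with the $2$-configurations normalised to sit on the $1$-edges $\{2i,2i+1\}$, the three non-yellow edges at $2i$ have colours $c_{i-1}$, the third colour of $\{R,G,B\}\setminus\{c_{i-1},c_i\}$, and $c_i$, with only the last pointing forward, and dually at $2i+1$ only the edge of colour $c_i$ points backward; since $c_i\neq c_{i+1}$ is forced by the $3$-configuration $C_{\{2i+1,2i+2\}}$, your dichotomy $c_i=R$ versus $c_i\neq R$ really does give exactly one forward and one backward red edge per yellow pair, and the contraction--lift step is sound. The route is genuinely different from the paper's, though. The paper never contracts and never tracks colours globally: it observes that a vertex $2k$ whose $X$-edge is an in edge and whose $Y$-edge is the out $1$-edge $\{2k,2k+1\}$ lies on one of only two local path shapes in $X\cup Y$, namely $[2k,2k+1,2k+2]$ or $[2k,2k+1,2k+3,2k+2,2k+4]$, each of which occupies a block of consecutive vertices and terminates at another vertex of the same kind; concatenating these internally disjoint segments around the circle immediately yields a Hamilton cycle. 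The paper's proof is shorter and needs little beyond the fact that $X$ is a $1$-factor together with the alternating-configuration structure, whereas your quotient argument costs more setup but makes the mechanism (one red edge forward and one back per yellow pair) completely explicit and pins down the exact cyclic order $y_0x_0y_1x_1\cdots y_{n-1}x_{n-1}$ of the vertices on the Hamilton cycle, which the segment-concatenation argument leaves implicit.
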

\begin{proof}
    Let \(X\in \mathcal{F}\setminus Y\) be a non-yellow 1-factor, and consider the union \(X\cup Y\).
    Without loss of generality, let the yellow edges be \(\left\lbrace 0,1 \right\rbrace,\left\lbrace 2,3 \right\rbrace, \dots , \left\lbrace n-2,n-1 \right\rbrace \) and consider some vertex $2k$ with an in edge in \(X\) and an out 1-edge in \(Y\). Depending on the edges of \(X\) this vertex lies on the path \(\left[2k,2k+1,2k+2\right]\) or the path \(\left[2k,2k+1,2k+3,2k+2,2k+4\right]\) in \(X \cup Y\)
    (see Figure \ref{Fig:two path cases yellow ham}). We note here that in either case the vertex set of the path is a set of consecutive vertices and all internal vertices lie between the endpoints. As this is the case for every even-labelled vertex with an in edge in \(X\) and an out 1-edge in \(Y\), and since these paths end on even-labelled vertices with an in edge in \(X\) and an out 1-edge in \(Y\), we repeat this argument to see that \(X\cup Y\) consists of the union of these internally-disjoint paths that together form a Hamilton cycle.
\end{proof}

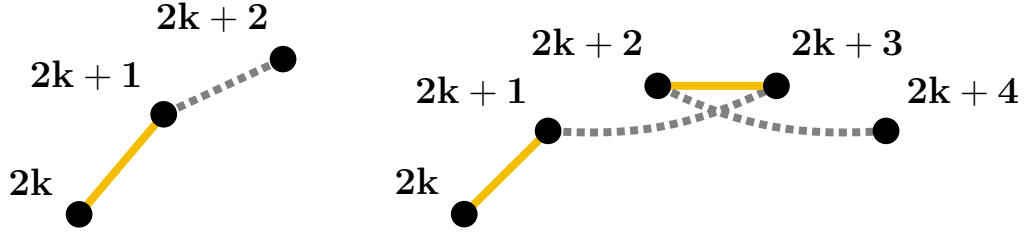
\begin{figure}[ht!]
    \centering
    \begin{subfigure}[b]{0.1\textwidth}
        \begin{center}
            \definecolor{darkyellow}{rgb}{0.960938, 0.742188, 0}
            \begin{tikzpicture}[dot/.style={circle, fill=black, inner sep=0pt, minimum size=1pt},ddot/.style={circle,fill=black,inner sep=0pt,minimum size=10pt},
                    lbl/.style={font=\large\bfseries} ]
                % Radius of the circle
                \def\radius{4}
                % Starting angle for the nodes
                \def\startAngle{90}
                % Angle increment for each node
                \def\angleInc{25}

                % Connection Nodes
                \node[dot,label={[lbl,label distance=2mm]\startAngle+\angleInc*2.5:\(\mathbf{2k}\)}] (noden) at (\startAngle+\angleInc*2.5:\radius) {};

                \node[dot,label={[lbl,label distance=2mm]\startAngle+\angleInc*1.5:\(\mathbf{2k+1}\)}] (node0) at (\startAngle+\angleInc*1.5:\radius) {};

                \node[dot,label={[lbl,label distance=2mm]\startAngle+\angleInc*0.5:\(\mathbf{2k+2}\)}] (node1) at (\startAngle+\angleInc*0.5:\radius) {};

                % \node[dot,label={[lbl,label distance=2mm]\startAngle-\angleInc*0.5:2}] (node2) at (\startAngle-\angleInc*0.5:\radius) {};

                % \node[dot,label={[lbl,label distance=2mm]\startAngle-\angleInc*1.5:3}] (node3) at (\startAngle-\angleInc*1.5:\radius) {};

                % \node[dot,label={[lbl,label distance=2mm]\startAngle-\angleInc*2.5:4}] (node4) at (\startAngle-\angleInc*2.5:\radius) {};

                \node[ddot] (dispdnoden) at (\startAngle+\angleInc*2.5:\radius) {};

                \node[ddot] (dispdnode0) at (\startAngle+\angleInc*1.5:\radius) {};

                \node[ddot] (dispdnode1) at (\startAngle+\angleInc*0.5:\radius) {};

                % \node[ddot] (dispdnode2) at (\startAngle-\angleInc*0.5:\radius) {};

                % \node[ddot] (dispdnode3) at (\startAngle-\angleInc*1.5:\radius) {};

                % \node[ddot] (dispdnode4) at (\startAngle-\angleInc*2.5:\radius) {};

                \begin{scope}[on background layer]
                    % Edges
                    \draw [darkyellow, line width=3pt] (noden) -- (node0);
                    \draw [gray, dotted, line width=3pt] (node0) -- (node1);
                    % \draw [darkyellow, line width=3pt] (node2) -- (node3);
                \end{scope}
            \end{tikzpicture}
        \end{center}
    \end{subfigure}
    \hspace{0.20\textwidth}
    \begin{subfigure}[b]{0.4\textwidth}
        \begin{center}
            \begin{tikzpicture}[dot/.style={circle, fill=black, inner sep=0pt, minimum size=1pt},ddot/.style={circle,fill=black,inner sep=0pt,minimum size=10pt},
                    lbl/.style={font=\large\bfseries} ]
                % Radius of the circle
                \def\radius{4}
                % Starting angle for the nodes
                \def\startAngle{90}
                % Angle increment for each node
                \def\angleInc{22.5}

                % Connection Nodes
                \node[dot,label={[lbl,label distance=2mm]\startAngle+\angleInc*2.5:\(\mathbf{2k}\)}] (noden) at (\startAngle+\angleInc*2.5:\radius) {};

                \node[dot,label={[lbl,label distance=2mm]\startAngle+\angleInc*1.5:\(\mathbf{2k+1}\)}] (node0) at (\startAngle+\angleInc*1.5:\radius) {};

                \node[dot,label={[lbl,label distance=2mm]\startAngle+\angleInc*0.5:\(\mathbf{2k+2}\)}] (node1) at (\startAngle+\angleInc*0.5:\radius) {};

                \node[dot,label={[lbl,label distance=2mm]\startAngle-\angleInc*0.5:\(\mathbf{2k+3}\)}] (node2) at (\startAngle-\angleInc*0.5:\radius) {};

                \node[dot,label={[lbl,label distance=2mm]\startAngle-\angleInc*1.5:\(\mathbf{2k+4}\)}] (node3) at (\startAngle-\angleInc*1.5:\radius) {};

                % \node[dot,label={[lbl,label distance=2mm]\startAngle-\angleInc*2.5:\(\mathbf{2k+5}\)}] (node4) at (\startAngle-\angleInc*2.5:\radius) {};

                \node[ddot] (dispdnoden) at (\startAngle+\angleInc*2.5:\radius) {};

                \node[ddot] (dispdnode0) at (\startAngle+\angleInc*1.5:\radius) {};

                \node[ddot] (dispdnode1) at (\startAngle+\angleInc*0.5:\radius) {};

                \node[ddot] (dispdnode2) at (\startAngle-\angleInc*0.5:\radius) {};

                \node[ddot] (dispdnode3) at (\startAngle-\angleInc*1.5:\radius) {};

                % \node[ddot] (dispdnode4) at (\startAngle-\angleInc*2.5:\radius) {};

                \begin{scope}[on background layer]
                    % Edges
                    \draw [darkyellow, line width=3pt] (noden) -- (node0);
                    \draw [gray, dotted, line width=3pt] (node0) to[bend right =15] (node2);
                    \draw [darkyellow, line width=3pt] (node2) -- (node1);
                    \draw [gray, dotted, line width=3pt] (node1) to[bend right =15] (node3);

                \end{scope}
            \end{tikzpicture}
        \end{center}
    \end{subfigure}
    \caption{The two cases of path segments in \(X\cup Y\).}
    \label{Fig:two path cases yellow ham}
\end{figure}

\begin{corollary}
    If \(\mathcal{F}=\left\lbrace R,G,B,Y \right\rbrace\) is an B1F, it must be a 1-B1F or 2-B1F. Further, \(R\cup G, R\cup B,\) and \(G\cup B\) must be of the same type.
\end{corollary}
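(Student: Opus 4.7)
The plan is to combine the two previous lemmas. By Lemma \ref{Yellow union is Ham}, each of the three pairs $Y\cup R$, $Y\cup G$, $Y\cup B$ is a Hamilton cycle of $Circ(2n,\{1,2\})$, i.e.\ each is of type $[2n]$. So among the $\binom{4}{2}=6$ unordered pairs of 1-factors in $\mathcal{F}$, at least three pairs share a common type.

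Now suppose $\mathcal{F}$ is an $m$-B1F. Since $m\mid\binom{4}{2}=6$, we have $m\in\{1,2,3,6\}$, and by definition each of the $m$ types occurs exactly $6/m$ times. The existence of three pairs all of type $[2n]$ forces $6/m\ge 3$, hence $m\le 2$. This rules out $m=3$ and $m=6$, leaving $m\in\{1,2\}$, which gives the first assertion.

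For the second assertion, observe that if $m=1$ then trivially every pair, and in particular $R\cup G$, $R\cup B$, $G\cup B$, has the same (unique) type. If $m=2$ then each of the two types occurs exactly three times; since the type $[2n]$ is already realised by the three pairs $Y\cup R$, $Y\cup G$, $Y\cup B$, the three remaining pairs $R\cup G$, $R\cup B$, $G\cup B$ must all realise the other type. In either case $R\cup G$, $R\cup B$, and $G\cup B$ share a common type, completing the proof.

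There is no real obstacle here; the corollary is essentially a pigeonhole count on top of Lemma \ref{Yellow union is Ham}, and the only point to be careful about is noting that the divisibility condition $m\mid 6$ together with the inequality $6/m\ge 3$ forced by the three Hamilton-cycle pairs immediately excludes $m\in\{3,6\}$.
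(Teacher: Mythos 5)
Your proof is correct and is essentially the argument the paper intends: the corollary appears with no written proof, being presented as an immediate consequence of Lemma \ref{Yellow union is Ham} (the three pairs involving $Y$ are all Hamilton cycles, so the balance condition forces $6/m\ge 3$ and pins the remaining three pairs to a single common type). Your pigeonhole bookkeeping just makes explicit what the paper leaves implicit.
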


\begin{lemma}
    A cycle in \(R\cup G, R\cup B,\) or \(G\cup B\), will contain either zero or two 1-edges.
\end{lemma}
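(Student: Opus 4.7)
The plan is to focus on $R\cup G$ (the cases $R\cup B$ and $G\cup B$ are entirely analogous) and, assuming a cycle $C$ of $R\cup G$ contains at least one 1-edge $e=\{v,v+1\}$, trace $C$ from $e$ to show that it must close through exactly one further 1-edge, with only 2-edges in between.

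First I would record the local structure at each vertex. Since $Y$ is a 1-factor of yellow 1-edges, each vertex has exactly one yellow edge; hence among its three non-yellow edges, one is a 1-edge and two are 2-edges. So at any vertex the pair of $R\cup G$ edges is either one 1-edge together with one 2-edge (when the non-yellow 1-edge at that vertex is coloured $R$ or $G$), or two 2-edges (when it is coloured $B$). In particular, no two 1-edges of $R\cup G$ share a vertex.

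Next, assume $C$ contains the 1-edge $e=\{v,v+1\}$, coloured without loss of generality $R$. By Lemma \ref{Lem: 2-configurations surround 3-configurations}, $C_e$ is a 3-configuration flanked by the 2-configurations $C_{\{v-1,v\}}$ and $C_{\{v+1,v+2\}}$, and, as observed earlier, the two 2-edges of any 2-configuration have the same colour. This forces $\{v-2,v\}$ and $\{v-1,v+1\}$ to share a colour, and $\{v,v+2\}$ and $\{v+1,v+3\}$ to share a colour. Since the two 2-edges at $v$ (respectively at $v+1$) must together use the colours $\{G,B\}$, the $G$-coloured edge at $v$ and the $G$-coloured edge at $v+1$ are forced to lie on the same side of $e$, so $C$ extends through $e$ as a pair of parallel 2-edges. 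I would then iterate this argument: at each subsequent step, applying the same pair of observations to the next 2-configuration encountered forces the two sides of $C$ either to extend in parallel by another matching pair of 2-edges (when the alternating required colour appears as a 2-edge), or else to meet at a single non-yellow 1-edge (when that required colour is only available via a 1-edge). In the latter case both sides land on the \emph{same} 1-edge, closing $C$ and providing the second and final 1-edge.

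The main obstacle is guaranteeing that the two sides of $C$ remain synchronised at every step: whenever one side is forced to jump to a 1-edge rather than continue along a 2-edge, the other side must be forced to jump to the same 1-edge. This synchronisation is exactly what the matching-colour property of the 2-configurations yields, so the heart of the argument is formalising how that local colour coincidence propagates around the cycle from the starting 1-edge $e$ until it closes back on itself.
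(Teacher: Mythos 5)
Your argument is correct and takes essentially the same route as the paper: starting from a 1-edge (whose configuration is a 3-configuration), you propagate the cycle outward in parallel pairs of 2-edges and show both strands must close on a common second 1-edge. The only cosmetic difference is that you read the synchronisation off the matching colours of the intervening 2-configurations, whereas the paper reads it off the out 2-edges of the 3-configurations --- but these are the very same pairs of edges, so the proofs coincide.
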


\begin{proof}
    Consider two distinct non-yellow 1-factors; without loss of generality, let them be \(R\) and \(G\) and consider \(R \cup G\).
    We first show that a cycle with zero 1-edges can exist in \(R\cup G\).
    If $n\equiv 0 \pmod 4$, then one possibility of a cycle in \(R \cup G\) is a cycle of length $\tfrac{n}{2}$ made from alternating 2-edges.

    We now show that if there exists a 1-edge in a cycle of \(R \cup G\), then that cycle must contain exactly two 1-edges.
    Without loss of generality, suppose there is a cycle in \(R\cup G\) that contains the green 1-edge \(\left\lbrace 1,2 \right\rbrace\) whose out 2-edges are red (recalling that \(C_{\left\lbrace  1,2 \right\rbrace}\) must be a 3-configuration in \(\mathcal{F}\) and that the out 2-edges of a 3-configuration are the same colour). From Corollary \ref{alternating configs} we know that \(C_{\left\lbrace 3,4 \right\rbrace}\) is also a 3-configuration, thus either \(\left\lbrace 3,4 \right\rbrace \in R \cup G\) in which case the edge \(\left\lbrace 1,2 \right\rbrace\) belongs to a 4-cycle in \(R\cup G\) (which contains exactly two 1-edges), or the out 2-edges of \(C_{\left\lbrace\,3,4 \right\rbrace}\) belong to \(R\cup G\) in which case the edge \(\left\lbrace  1,2 \right\rbrace\) belongs to a cycle of length at least 6. If the latter is the case we use similar logic to conclude that either \(\left\lbrace  1,2 \right\rbrace\) belongs to a 6-cycle with exactly two 1-edges, or \(\left\lbrace  1,2 \right\rbrace\) belongs to a cycle of length at least 8. If we continue this logic, we can see that the edge \(\left\lbrace  1,2 \right\rbrace\) will always belong to an even length cycle that contains exactly two 1-edges.
\end{proof}
We also note that if the union of two non-yellow 1-factors contains a cycle of length $\tfrac{n}{2}$ made from alternating 2-edges, then the vertices not on that cycle must lie on another cycle of length $\tfrac{n}{2}$ made from alternating 2-edges. This along with the previous lemma gives the following.
\begin{corollary} \label{Cor: num cycles is half 1-edges}
    Let \(X,Z\in \mathcal{F}\setminus Y\) be non-yellow 1-factors of \(\mathcal{F}\) and consider their union \(X \cup Z\).
    The number of cycles in \(X\cup Z\) is equal to half the number of 1-edges in the pair of 1-factors if there is at least one 1-edge, and equal to two if there are no 1-edges.
\end{corollary}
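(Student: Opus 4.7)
The plan is to split on whether $X\cup Z$ contains any 1-edge, and in each case to combine the preceding lemma (every cycle contains zero or two 1-edges) with the note preceding the corollary (a cycle of alternating 2-edges forces a complementary one).

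If $X\cup Z$ contains no 1-edges, then every one of its edges is a 2-edge. Since $X$ and $Z$ each have $n$ edges, $X\cup Z$ has exactly $2n$ edges, which equals the total number of 2-edges in $Circ(2n,\{1,2\})$. Hence $X\cup Z$ coincides with the full 2-edge subgraph $Circ(2n,\{2\})$, and this subgraph decomposes into the two cycles on the even-indexed and odd-indexed vertices. This gives the desired count of two cycles.

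If $X\cup Z$ contains at least one 1-edge, I would show that every cycle of $X\cup Z$ must contain exactly two 1-edges; summing over cycles then yields that the number of cycles equals half the number of 1-edges. By the preceding lemma each cycle contains zero or two 1-edges, so it suffices to rule out the possibility of a cycle with zero 1-edges. Suppose for contradiction that such a cycle $C$ exists. Then $C$ is made entirely of 2-edges, and because $C$ lies in the union of two 1-factors its edges necessarily alternate between $X$ and $Z$. Applying the note preceding the corollary, the vertices not on $C$ must also form a cycle of alternating 2-edges, so every edge of $X\cup Z$ is a 2-edge, contradicting the presence of a 1-edge. Hence every cycle contains exactly two 1-edges and the claim follows.

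The main subtle point is precisely this contradiction: the lemma by itself allows a mixed decomposition in which some cycles contain two 1-edges and others contain none, and the note on complementary alternating-2-edge cycles is exactly the extra structural fact needed to forbid such mixtures. Once this is in place the rest of the argument is straightforward edge-counting.
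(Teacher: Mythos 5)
Your proof is correct and follows essentially the same route the paper intends: the corollary is derived from the preceding lemma (each cycle contains zero or two 1-edges) together with the note that an alternating-2-edge cycle forces a complementary one, and you correctly identify that the only real content is ruling out a mixed decomposition. Your direct edge count in the no-1-edge case ($2n$ edges in $X\cup Z$ versus $2n$ total 2-edges, hence $X\cup Z = Circ(2n,\{2\})$) is a slightly cleaner way to get the "two cycles" conclusion than re-invoking the note, but it is the same argument in substance.
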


\begin{theorem}
    \(\mathcal{F}\) is not a 2-B1F of $Circ(2n,\{1,2\})$ when $n\equiv 1,2 \pmod 3, n\geq 5$.
\end{theorem}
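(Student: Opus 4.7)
The plan is to leverage the tight structural constraints established by the preceding lemmas to reduce the question to a simple counting obstruction. Lemma \ref{Yellow union is Ham} shows that each of $Y\cup R$, $Y\cup G$, $Y\cup B$ is a Hamilton cycle, hence of type $[2n]$, so for $\mathcal{F}$ to be a 2-B1F the three remaining unions $R\cup G$, $R\cup B$, $G\cup B$ must share a single common type $T\neq[2n]$ (they already share some common type by the corollary immediately following Lemma \ref{Yellow union is Ham}). The first concrete step is to count 1-edges: $Circ(2n,\{1,2\})$ has exactly $2n$ 1-edges, of which $Y$ contains $n$, so writing $r$, $g$, $b$ for the number of 1-edges in $R$, $G$, $B$ respectively gives $r+g+b=n$.

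Next I would apply Corollary \ref{Cor: num cycles is half 1-edges} to count the cycles in each of the three non-yellow unions: the number of cycles equals $(r+g)/2$, $(r+b)/2$, $(g+b)/2$ whenever the corresponding sum is positive, and equals $2$ when the sum is zero. For $R\cup G$, $R\cup B$, $G\cup B$ to share a common type, their cycle counts must agree, which I would split into three cases. If all three sums are positive, equating $(r+g)/2 = (r+b)/2 = (g+b)/2$ forces $r=g=b=n/3$, requiring $3\mid n$ and contradicting $n\equiv 1,2\pmod 3$. If exactly one sum vanishes, say $r+g=0$, then $b=n$ and agreement of cycle counts demands $n/2 = 2$, forcing $n=4$ and contradicting $n\geq 5$. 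If two or more of the sums vanish, then $r=g=b=0$, contradicting $r+g+b=n\geq 5$.

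Each case yields a contradiction, so no 2-B1F exists under the stated hypotheses. The heavy lifting has already been carried out by the structural lemmas (alternating configurations, $Y$ consisting of 1-edges, and the cycle-counting corollary), so I do not anticipate a genuine obstacle. The only point worth checking carefully is the degenerate case where one of $r$, $g$, $b$ vanishes: one must confirm that $n/2 = 2$ really is the unique integer solution, ruling out any agreement with $n\geq 5$.
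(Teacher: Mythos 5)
Your proposal is correct and follows essentially the same route as the paper: both use Lemma \ref{Yellow union is Ham} to reduce to the three non-yellow unions, count 1-edges (totalling $n$ across $R$, $G$, $B$), and apply Corollary \ref{Cor: num cycles is half 1-edges} to show the cycle counts cannot all agree when $3\nmid n$, with the same special handling of the degenerate case where a union has no 1-edges. The only cosmetic difference is that you equate all three cycle counts symmetrically, whereas the paper orders $x_B\leq x_G\leq x_R$ and compares just $B\cup G$ with $R\cup G$.
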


\begin{proof}
    From Lemma \ref{Yellow union is Ham} we know that $R\cup Y,G\cup Y,B\cup Y$ all form Hamilton cycles, thus to show that there is no 2-B1F it suffices to show that $B \cup G$ and $R \cup G$ do not share the same cycle structure.

    Let $x_R,x_G,x_B$ $(y_R,y_G,y_B)$ denote the number of 1-edges (2-edges) in R, G, B respectively. Without loss of generality let $x_B\leq x_G \leq x_R$.
    As \(2n \not\equiv 0 \pmod 3\), we note that $B\cup G$ has fewer 1-edges than $R\cup G$. We also know that since $x_i+y_i=n , \forall i \in \{R,G,B\}$ and \(y_i\) is even for all \(i \in \left\lbrace R,G,B \right\rbrace\) by Observation \ref{2-edge in pairs}, $x_i\equiv n \pmod 2$. It follows that \(B\cup G\) and \(R\cup G\) will both have an even number of 1-edges, so let these counts be $2j_{B \cup G}$ and $2j_{R \cup G}$, respectively, where \(j_{B \cup G}\) and \(j_{R \cup G}\) are integers.

    If $2j_{B \cup G}>0$, then by Corollary \ref{Cor: num cycles is half 1-edges} we have that the number of cycles in $B \cup G$ is $j_{B \cup G}$ and the number of cycles in $R \cup G$ is $j_{R \cup G}$.
    As $j_{B \cup G} < j_{R \cup G}$, $B\cup G$ and $R\cup G$ cannot have the same cycle structure.
    If $2j_{B \cup G}=0$, then $2j_{R \cup G}=n$, thus \(2j_{R \cup G} \geq 8\).
    From Corollary \ref{Cor: num cycles is half 1-edges}, we can see that there are two cycles in $B\cup G$ and at least four cycles in $R\cup G$.
    Hence, $B\cup G$ and $R\cup G$ cannot have the same cycle structure.

    Thus, this 1-factorisation is not an B1F. As the 1-factorisation was arbitrary, $Circ(2n,\{1,2\})$ does not admit a B1F when $n\equiv 1,2 \pmod 3, n\geq 5$.
\end{proof}

We now show that for \(n \geq 3  \), if \(n=4\) or \(n \equiv 0 \pmod 3 \) then \(Circ(2n,\left\lbrace 1,2 \right\rbrace)\) admits a 2-B1F.

\begin{lemma}
    There exists a 2-B1F of \(Circ(8,\left\lbrace  1,2 \right\rbrace)\).
\end{lemma}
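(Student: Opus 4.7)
The plan is to exhibit a single explicit 1-factorisation of $Circ(8,\{1,2\})$ and verify by direct inspection that its six pair-unions split $3+3$ between two isomorphism types. The structural results already proved in the section pin down almost completely what such a 1-factorisation must look like: Lemma \ref{yellow factor is base edges} forces some 1-factor $Y$ to consist entirely of 1-edges, Lemma \ref{Yellow union is Ham} forces each union $X \cup Y$ with $X \in \mathcal{F} \setminus \{Y\}$ to be a Hamilton cycle, and the corollary following Lemma \ref{Yellow union is Ham} forces the three unions among the non-yellow factors to share a common type $T$. So a 2-B1F of this graph must realise exactly the types $([8], T)$ for some $T \neq [8]$, with three pairs of each.

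To build one I would exploit an accident available at this small order: the $1$-edges of $Circ(8,\{1,2\})$ form a single 8-cycle, and both of its perfect matchings are themselves 1-factors of the whole graph, so both can be used in $\mathcal{F}$. Concretely I propose
\begin{align*}
F_1 &= \{\{0,1\},\{2,3\},\{4,5\},\{6,7\}\}, \\
F_2 &= \{\{1,2\},\{3,4\},\{5,6\},\{7,0\}\}, \\
F_3 &= \{\{0,2\},\{1,3\},\{4,6\},\{5,7\}\}, \\
F_4 &= \{\{2,4\},\{3,5\},\{6,0\},\{7,1\}\}.
\end{align*}
The factors $F_1$ and $F_2$ together exhaust the eight 1-edges, and $F_3, F_4$ partition the eight 2-edges by taking alternate edges on each of the two 4-cycles that make up $Circ(8,\{2\})$. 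That this is a valid 1-factorisation is then immediate by inspection.

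The substantive step is to trace each of the six pair-unions as a 2-regular graph. I expect to find that $F_1 \cup F_2$ is the 8-cycle of 1-edges, $F_3 \cup F_4$ is the disjoint union of the two 2-edge 4-cycles on the even and odd vertices, each of $F_2 \cup F_3$ and $F_2 \cup F_4$ forms a single Hamilton cycle, and each of $F_1 \cup F_3$ and $F_1 \cup F_4$ decomposes into two 4-cycles. The resulting tally is three unions of type $[8]$ and three of type $[4,4]$, which proves that $\mathcal{F}$ is a 2-B1F with types $([8],[4,4])$.

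There is no real obstacle here; the proof is essentially a construction plus case-check. The only mild subtlety worth flagging is the asymmetric role played by the two 1-edge matchings: although $F_1$ and $F_2$ look structurally interchangeable, exactly one of them (namely $F_2$) is the "yellow" factor whose 1-edges coincide with the 1-edges of the 2-configurations, while $F_1$ technically counts as a non-yellow factor that happens to consist of 1-edges. A useful sanity check during the write-up is to verify that the configurations of $\mathcal{F}$ alternate between 2-configurations and 3-configurations as required by Corollary \ref{alternating configs}; this both confirms the construction fits the structural framework of the section and explains why the three mixed pairs containing $F_2$ yield Hamilton cycles while the three mixed pairs containing $F_1$ yield $[4,4]$.
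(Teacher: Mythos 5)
Your proposal is correct and in fact exhibits exactly the same 1-factorisation as the paper (your $F_1,F_2,F_3,F_4$ are the paper's $R,Y,G,B$), verified in the same way by direct inspection of the six pair-unions, yielding three unions of type $\left[8\right]$ and three of type $\left[4,4\right]$. The surrounding structural commentary is accurate but not needed; the paper likewise just states the construction and checks it.
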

\begin{proof}
    Consider the 1-factorisation \(\mathcal{F}=\left\lbrace R,G,B,Y \right\rbrace\) defined as follows:
    \begin{align*}
        R & = \left\lbrace \left\lbrace 0,1 \right\rbrace, \left\lbrace 2,3 \right\rbrace,\left\lbrace 4,5 \right\rbrace,\left\lbrace 6,7 \right\rbrace \right\rbrace,  \\
        G & = \left\lbrace \left\lbrace 0,2 \right\rbrace,\left\lbrace 1,3 \right\rbrace,\left\lbrace 4,6 \right\rbrace,\left\lbrace 5,7 \right\rbrace \right\rbrace,   \\
        B & = \left\lbrace \left\lbrace 0,6 \right\rbrace, \left\lbrace 1,7 \right\rbrace, \left\lbrace 2,4 \right\rbrace,\left\lbrace 3,5 \right\rbrace \right\rbrace, \\
        Y & = \left\lbrace \left\lbrace 0,7 \right\rbrace,\left\lbrace 1,2 \right\rbrace,\left\lbrace 3,4 \right\rbrace,\left\lbrace 5,6 \right\rbrace \right\rbrace.
    \end{align*}
    \begin{figure}
        \centering
        \begin{tikzpicture}[dot/.style={circle, fill=black, inner sep=0pt, minimum size=1pt},ddot/.style={circle,fill=black,inner sep=0pt,minimum size=10pt},
                lbl/.style={font=\large\bfseries} ]
            % Radius of the circle
            \def\radius{2}
            % Starting angle for the nodes
            \def\startAngle{90}
            % Angle increment for each node
            \def\angleInc{22.5}

            \foreach \s in {0,...,7}
                {
                    \node[dot,label={[lbl,label distance=2mm]{360/8 * (-\s)+67.5}:\(\mathbf{\s}\)}] (\s) at ({360/8 * (-\s)+67.5}:\radius) {{\tiny$\s$}};
                }
            \foreach \s in {0,...,7}
                {
                    \node[ddot] (dispdnode\s) at ({360/8 * (-\s)+67.5}:\radius) {{\tiny$\s$}};
                }

            \begin{scope}[on background layer]
                % Edges
                \draw [circ_darkred] (0) -- (1);
                \draw [circ_darkred] (2) -- (3);
                \draw [circ_darkred] (4) -- (5);
                \draw [circ_darkred] (6) -- (7);
                \draw [circ_darkyellow] (0) -- (7);
                \draw [circ_darkyellow] (1) -- (2);
                \draw [circ_darkyellow] (3) -- (4);
                \draw [circ_darkyellow] (5) -- (6);
                \draw [circ_darkblue] (6) to[bend right = 15] (0);
                \draw [circ_darkblue] (7) to[bend right = 15] (1);
                \draw [circ_darkblue] (2) to[bend right = 15] (4);
                \draw [circ_darkblue] (3) to[bend right = 15] (5);
                \draw [circ_darkgreen] (0) to[bend right = 15] (2);
                \draw [circ_darkgreen] (1) to[bend right = 15] (3);
                \draw [circ_darkgreen] (4) to[bend right = 15] (6);
                \draw [circ_darkgreen] (5) to[bend right = 15] (7);
            \end{scope}
            \begin{scope}[node distance=1cm, every node/.style={font=\sffamily}, align=left]
                \matrix [above right = of current bounding box.north east, yshift=-0.5cm,anchor=north, nodes={inner sep=0pt}, row sep=0.15cm] {
                % Line for Edge type 1
                \node [label=right:{\small\(R\)}] (legend2) {};
                \draw [circ_darkred] ([xshift=-0.75cm]legend2.west) -- (legend2.west);
                \\
                % Line for Edge type 2
                \node [label=right:{\small\(G\)}] (legend4) {};
                \draw [circ_darkgreen] ([xshift=-0.75cm]legend4.west) -- (legend4.west);
                \\
                % Line for Edge type 3
                \node [label=right:{\small\(B\)}] (legend3) {};
                \draw [circ_darkblue] ([xshift=-0.75cm]legend3.west) -- (legend3.west);
                \\
                % Line for Edge type 4
                \node [label=right:{\small\(Y\)}] (legend1) {};
                \draw [circ_darkyellow] ([xshift=-0.75cm]legend1.west) -- (legend1.west);
                \\
                };
            \end{scope}
        \end{tikzpicture}
        \caption{A 2-B1F of \(Circ(8,\left\lbrace 1,2 \right\rbrace)\).}\label{Fig: 2-B1F Circ(8,{1,2})}
    \end{figure}
    It is straightforward to confirm that \(\mathcal{F}\) is a 2-B1F with types \(\left(\left[8\right],\left[4^2\right]\right)\). (See Figure \ref{Fig: 2-B1F Circ(8,{1,2})}.)
\end{proof}

\begin{lemma} \label{Lem: Construction of 2-B1F of Circ(6a,{1,3})}
    For all \(n\equiv 0 \pmod 3\), \(n>3\), $Circ(2n,\{1,2\})$ admits a 2-B1F with types \(\left(\left[2n\right],\left[6^{\tfrac{2n}{6}}\right]\right)\).
\end{lemma}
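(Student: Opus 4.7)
The plan is to exhibit an explicit 1-factorisation $\mathcal{F} = \{R, G, B, Y\}$ of $Circ(2n, \{1,2\})$, built from period-6 blocks with a $\mathbb{Z}_3$-symmetry among the non-yellow factors, and then verify the 2-B1F property directly. Writing $n = 3k$ with $k \geq 2$, I would set $Y = \bigl\{\{2j,2j+1\} : 0 \leq j \leq n-1\bigr\}$ and
\[ R = \bigl\{\{6j+1,\,6j+2\},\ \{6j+3,\,6j+5\},\ \{6j+4,\,6j+6\} : 0 \leq j \leq k-1\bigr\}, \]
with all indices taken mod $2n$, and then $G = R + 2$ and $B = R + 4$ under the cyclic shift on $\mathbb{Z}_{2n}$.

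I would first check that $\mathcal{F}$ is a 1-factorisation. Each of $R, G, B$ has $3k = n$ edges and covers every vertex exactly once (the three edge types in $R$ cover the vertices of residues $\{1,2\}$, $\{3,5\}$ and $\{0,4\}$ modulo 6, respectively). The partition of $E(Circ(2n,\{1,2\}))$ reduces to a short residue-modulo-3 argument: the ``odd'' 1-edges $\{2j+1, 2j+2\}$, the even-vertex 2-edges, and the odd-vertex 2-edges are each split among $R$, $G$, $B$ according to a residue class modulo 3. Since the 1-factor $Y$ consists entirely of 1-edges (precisely the ones corresponding to the 2-configurations of $\mathcal{F}$), Lemma \ref{Yellow union is Ham} immediately yields that $R \cup Y$, $G \cup Y$, and $B \cup Y$ are Hamilton cycles, supplying the three pairs of type $[2n]$.

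For the remaining three pairs I would trace edges directly from the definitions. One checks that $R \cup G$ decomposes into the $k = n/3$ vertex-disjoint 6-cycles
\[ (6j+3,\,6j+5,\,6j+7,\,6j+8,\,6j+6,\,6j+4), \quad j = 0, 1, \ldots, k-1, \]
so $R \cup G$ has type $[6^{n/3}]$. Since $G \cup B = (R \cup G) + 2$ under the cyclic shift, it has the same type. A similar short edge-trace shows that $R \cup B$ decomposes into the $k$ hexagons $(6j+1, 6j+2, 6j+4, 6j+6, 6j+5, 6j+3)$, again of type $[6^{n/3}]$. Combined with the Hamilton cycles above, this makes $\mathcal{F}$ a 2-B1F with types $([2n], [6^{n/3}])$.

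The main obstacle is really just bookkeeping: one needs to handle the mod-$2n$ wrap when $j = k-1$ (so that, e.g., the last hexagon of $R \cup G$ uses the indices $6k-3,\,6k-1,\,1,\,2,\,0,\,6k-2$), and confirm that no edges coincide between $R$, $R+2$ and $R+4$. Once the period-6 ansatz with a $\mathbb{Z}_3$-symmetry among $R, G, B$ is in place, the verification is routine.
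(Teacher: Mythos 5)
Your proposal is correct and follows essentially the same route as the paper: the same four 1-factors (your $R$, $G=R+2$, $B=R+4$, $Y$ coincide with the paper's residue-class description), the same appeal to Lemma \ref{Yellow union is Ham} for the three Hamilton-cycle pairs, and the same hexagon decomposition $(x,x+2,x+4,x+5,x+3,x+1)$ for the non-yellow pairs via the shift symmetry $\phi(v)=v+2$. The only cosmetic difference is that you trace $R\cup B$ explicitly where the paper deduces it (along with $G\cup B$) from the $\mathbb{Z}_3$-symmetry after checking $R\cup G$ alone.
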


\begin{proof}
    Let \(G=Circ(2n,\{1,2\})\) for some integer \(n>3\) where \(n\equiv 0\pmod 3\). We begin by constructing four 1-factors as follows
    \begin{align*}
        \text{R}	\,  = & \left\lbrace
        \left\lbrace
        x,x+1
        \right\rbrace
        : x \equiv 1\Mod{6}
        \right\rbrace
        \cup
        \left\lbrace
        \left\lbrace
        x,x+2
        \right\rbrace
        : x \equiv 3,4\Mod{6}
        \right\rbrace  ,              \\
        \text{G}	\,  = & \left\lbrace
        \left\lbrace
        x,x+1
        \right\rbrace
        : x \equiv 3\Mod{6}
        \right\rbrace
        \cup
        \left\lbrace
        \left\lbrace
        x,x+2
        \right\rbrace
        : x \equiv 0,5\Mod{6}
        \right\rbrace ,               \\
        \text{B}	  =   & \left\lbrace
        \left\lbrace
        x,x+1
        \right\rbrace
        : x \equiv 5\Mod{6}
        \right\rbrace
        \cup
        \left\lbrace
        \left\lbrace
        x,x+2
        \right\rbrace
        : x \equiv 1,2\Mod{6}
        \right\rbrace,                \\
        \text{Y}	  =   & \left\lbrace
        \left\lbrace
        x,x+1
        \right\rbrace
        : x \equiv 0\Mod{2}
        \right\rbrace.
    \end{align*}
    Clearly, \(\mathcal{F}=\left\lbrace R, G, B, Y \right\rbrace\) is a 1-factorisation of \(G\). Define a mapping \(\phi : V(G) \rightarrow V(G)\) where \(\phi(v)=v+2\) (addition is performed modulo \(2n\)). One can easily confirm that \(\phi(Y)=Y, \phi(R)=G, \phi(G)=B\), and \(\phi(B)=R\).

    From Lemma \ref{Yellow union is Ham} we know that \(R \cup Y, G \cup Y, \textrm{and }B \cup Y\) are all of type \(\left[n\right]\). Due to the symmetry of \(R,G,B\) under \(\phi\), the remaining unions of pairs are all isomorphic to each other. To determine the type, we will consider the union \(R \cup G\). It is straightforward to check that \(R \cup G\) is of type \(\left[6^{\tfrac{2n}{6}}\right]\) with cycles \[(x,x+2,x+4,x+5,x+3,x+1)\text{ for } x\equiv 3 \Mod 6.\] Thus, \(\mathcal{F}\) is a 2-B1F with types \(\left(\left[2n\right],\left[6^{\tfrac{2n}{6}}\right]\right)\).
\end{proof}

Recall that Herke \cite{SaraPhDThesis} showed that \(Circ(2n,\left\lbrace 1,2 \right\rbrace)\) admits a 1-B1F if and only if \(n\in \left\lbrace  2,3 \right\rbrace\). Combining this result with the above two lemmas lets us state the following theorem.

\begin{theorem}\label{Thm: m-B1Fs of Circ(n,{1,2}) existence}
    For \(n \geq 2 \), \(Circ(2n,\left\lbrace 1,2 \right\rbrace)\) admits an \(m\)-B1F if and only if
    \begin{enumerate}
        \item \(m=1\) and \(n\in \left\lbrace 2,3 \right\rbrace\); or
        \item \(m=2\) and either \(n=4\) or \(n\equiv 0 \pmod 3\).
    \end{enumerate}
\end{theorem}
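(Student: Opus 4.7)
The plan is to assemble this theorem from the results already established in the subsection; no new construction or argument is needed. For the sufficiency direction I will observe that the four listed cases are already covered: Herke's characterisation supplies a $1$-B1F for $n\in\{2,3\}$, the first lemma above supplies a $2$-B1F of $Circ(8,\{1,2\})$, and Lemma \ref{Lem: Construction of 2-B1F of Circ(6a,{1,3})} supplies a $2$-B1F for every $n\equiv 0\pmod 3$ with $n>3$.

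For the necessity direction my main tool is the corollary following Lemma \ref{Yellow union is Ham}, which states that every B1F of $Circ(2n,\{1,2\})$ (for the 4-regular range $n\geq 3$) is either a $1$-B1F or a $2$-B1F. This immediately kills the cases $m\in\{3,6\}$ for all $n\geq 3$. The case $n=2$ is different because the graph $Circ(4,\{1,2\})\cong K_4$ is $3$-regular, so the divisibility requirement forces $m\in\{1,3\}$; and $m=3$ is impossible since any two of the three $1$-factors of $K_4$ union to a $4$-cycle, giving only the single type $[4]$. So only $m\in\{1,2\}$ remains to be considered.

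For $m=1$, Herke's cited result gives exactly $n\in\{2,3\}$. For $m=2$, I will dispatch the three types of remaining cases one at a time. The case $n=2$ fails immediately since $2\nmid\binom{3}{2}=3$. The case $n=3$ is an edge case not covered by the previous theorem, but it is easy: in $Circ(6,\{1,2\})$ the union of any two $1$-factors is a $2$-regular spanning subgraph on six vertices whose cycles must have even length at least $4$, so it must be a single $6$-cycle. Hence only the type $[6]$ ever appears, no two distinct types are available, and no $2$-B1F can exist. All remaining cases have $n\geq 5$ with $n\equiv 1,2\pmod 3$, and are excluded by the preceding theorem.

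There is no substantial obstacle — the proof is essentially a bookkeeping exercise — but the subtlety to watch out for is the two edge values $n=2$ and $n=3$, which lie outside the range of both the ``$n\geq 5$ and $n\equiv 1,2\pmod 3$'' non-existence theorem and the ``$n\equiv 0\pmod 3$, $n>3$'' construction lemma, and which also involve the $3$-regular degenerate case $n=2$ where the general corollary about alternating configurations does not literally apply. Once these two values are explicitly treated, the remainder of the proof reduces to citing each previously-proved statement in turn.
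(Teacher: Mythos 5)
Your proposal is correct and follows essentially the same route as the paper, which simply assembles Herke's characterisation of 1-B1Fs, the corollary restricting any B1F of \(Circ(2n,\{1,2\})\) to a 1-B1F or 2-B1F, the non-existence theorem for \(n\equiv 1,2\pmod 3\), \(n\geq 5\), and the two construction lemmas; the paper states the theorem with no further argument beyond ``combining'' these. Your version is more careful in exactly the right places: the paper silently skips the degenerate case \(n=2\) (where the graph is \(K_4\) and the four-factor machinery does not apply) and the case \(n=3\). Note, though, that your (correct) argument that \(Circ(6,\{1,2\})\) admits only the type \([6]\), hence no 2-B1F, is in tension with the theorem as literally stated, since \(n=3\) satisfies \(n\equiv 0\pmod 3\); the construction lemma carries the hypothesis \(n>3\), so the theorem's second clause should be read (or restated) as \(n\equiv 0\pmod 3\) with \(n>3\), and your proof establishes that corrected statement.
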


\subsection{B1Fs of \(\mathbf{Circ(2n,\left\lbrace 1,3 \right\rbrace)}\)}
Herke \cite{SaraPhDThesis} showed that \(Circ(2n,\left\lbrace 1,3 \right\rbrace)\) admits a \(1\)-B1F if and only if \(n=4\), \(n>3\) and \(n\equiv 1 \pmod 2\), or \(n \geq 10 \) and \(n\equiv 0 \pmod 2\). We will give constructions of \(m\)-B1Fs of \(Circ(2n,\left\lbrace 1,3 \right\rbrace)\) for \(m\in \left\lbrace  2,3,6 \right\rbrace\). The proofs in this subsection will make use of the following definitions and observations.

We say that a 1-factor, \(F\), of a circulant graph has a \emph{gap} at vertices \((v,v+1)\) if \(F\) does not contain the edges \(\left\lbrace v,v+1 \right\rbrace,\left\lbrace v-2,v+1 \right\rbrace,\left\lbrace v,v+3 \right\rbrace,\) or \(\left\lbrace v-1,v+2 \right\rbrace\). We say that a circulant graph has \(z\) \emph{sequential gaps} between vertices \(v\) and \(v+z\) in the order \(F_1,F_2, \dots, F_z \) if there are \(z\) 1-factors, \(F_1,F_2, \dots, F_z \), that have gaps at \((v,v+1),(v+1,v+2), \dots, (v+z-1,v+z) \) respectively. For our coming construction we are interested in 1-factorisations with 4 sequential gaps; the following lemma will be helpful.

\begin{lemma} \label{Lem: Inner edges of 4 gap}
    Let \(\mathcal{F}=\left\lbrace R,G,B,Y \right\rbrace\) be a 1-factorisation of \(Circ(2n,\left\lbrace 1,3 \right\rbrace)\). If \(\mathcal{F}\) has 4 sequential gaps between vertices \(v\) and \(v+4\) in the order \(R,G,B,Y\), then \(\left\lbrace v,v+3 \right\rbrace\in Y \) and \(\left\lbrace v+1,v+4 \right\rbrace\in R\).
\end{lemma}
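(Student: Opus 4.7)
The plan is to directly apply the definition of a gap. A gap at $(w,w+1)$ in a 1-factor $F$ forbids the four edges $\{w,w+1\},\{w-2,w+1\},\{w,w+3\},\{w-1,w+2\}$ from lying in $F$ — intuitively, these are precisely the four edges of $Circ(2n,\{1,3\})$ that cross the imaginary radial bisector between $w$ and $w+1$. Since $\mathcal{F}$ is a 1-factorisation, every edge of the graph lies in exactly one of $R,G,B,Y$, so to locate an edge it suffices to eliminate three of the four 1-factors.

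First I would handle the edge $\{v,v+3\}$. Substituting $w=v$ into the gap definition for $R$ shows $\{v,v+3\}\notin R$. Substituting $w=v+1$ into the gap definition for $G$ gives, among the four forbidden edges, $\{v,v+3\}$, so $\{v,v+3\}\notin G$. Substituting $w=v+2$ into the gap definition for $B$ also produces $\{v,v+3\}$ in its list of forbidden edges, so $\{v,v+3\}\notin B$. Since $\{v,v+3\}$ is an edge of $Circ(2n,\{1,3\})$, it must therefore belong to $Y$.

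An identical argument handles $\{v+1,v+4\}$. It is forbidden from $G$ (take $w=v+1$), from $B$ (take $w=v+2$), and from $Y$ (take $w=v+3$, whose gap list contains $\{v+1,v+4\}$). By elimination, $\{v+1,v+4\}\in R$.

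There is essentially no obstacle here; the whole lemma reduces to reading off the four-edge list attached to each of the four sequential gaps and observing that the two "inner" 3-edges $\{v,v+3\}$ and $\{v+1,v+4\}$ each appear in three of the four lists. The only care needed is to verify systematically, for each gap position $w\in\{v,v+1,v+2,v+3\}$, that the two edges under consideration do (or do not) appear in the forbidden list, which I would tabulate concisely rather than spelling out all sixteen entries.
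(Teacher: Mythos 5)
Your proof is correct and rests on the same underlying idea as the paper's: each edge lies in exactly one 1-factor, and the gap definitions eliminate three of the four candidates for each of $\{v,v+3\}$ and $\{v+1,v+4\}$. The paper phrases this as a short contradiction argument via the four edges incident to $v+3$, whereas you eliminate directly by reading the edge off three forbidden lists; this is only a cosmetic difference, and your direct version is if anything slightly cleaner.
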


\begin{proof}
    Suppose that \(\mathcal{F}\) has 4 sequential gaps between vertices \(v\) and \(v+4\) in the order \(R,G,B,Y\) and a that the edge \(\left\lbrace v,v+3 \right\rbrace \not\in Y\). As \(Y\) is a 1-factor and has a gap at \((v+3,v+4)\), it follows that \(\left\lbrace v+2,v+3 \right\rbrace \in Y\). Similarly, it follows that the edges \(\left\lbrace v+3,v+4 \right\rbrace, \left\lbrace v+3,v+6 \right\rbrace\) must belong to \(G\) and \(B\) in some arrangement. Thus, \(\left\lbrace v,v+3 \right\rbrace \in R\). This contradicts with our assumption that \(R\) had a gap at vertices \(v,v+1\). Thus, \(\left\lbrace v,v+3 \right\rbrace\in Y\). A similar argument shows that \(\left\lbrace v+1,v+4 \right\rbrace\in R\).
\end{proof}

A 1-factorisation, \(\mathcal{F}=\left\lbrace F_1,F_2,F_3,F_4 \right\rbrace\), of \(Circ(2n,\left\lbrace 1,3 \right\rbrace)\) that has 4 sequential gaps between \(0\) and \(4\) in the order \(F_1,F_2,F_3,F_4\) that also has the edge \(\left\lbrace 2,3 \right\rbrace\) in \(F_1\) and the edge \(\left\lbrace 1,2 \right\rbrace\) in \(F_4\), is said to satisfy \emph{Condition C}. We now show that we can build a larger 1-factorisation satisfying Condition C from a 1-factorisation that satisfies Condition C.

\begin{lemma} \label{Lem: Inductive Construction}
    If there exists a 1-factorisation of \(Circ(2n,\left\lbrace 1,3 \right\rbrace)\) that satisfies Condition C, then there exists a 1-factorisation of \(Circ(2n+4,\left\lbrace 1,3 \right\rbrace)\) that also satisfies Condition C.
\end{lemma}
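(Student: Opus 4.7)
The plan is to construct $\mathcal{F}' = \{F'_1, F'_2, F'_3, F'_4\}$ on $Circ(2n+4,\{1,3\})$ from $\mathcal{F}$ by ``splicing'' four new vertices into the gap region. First, I would use Condition~C together with Lemma~\ref{Lem: Inner edges of 4 gap} to pin down the edges of $\mathcal{F}$ near the gap region: Condition~C gives $\{2,3\} \in F_1$ and $\{1,2\} \in F_4$, the lemma gives $\{0,3\} \in F_4$ and $\{1,4\} \in F_1$, and then a short matching argument at vertex~$2$ (using the fact that the gap at $(1,2)$ in $F_2$ forbids $\{-1,2\}$) forces $\{2,5\} \in F_2$ and $\{-1,2\} \in F_3$. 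This isolates exactly which old factor each ``inner'' edge belongs to.

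Next, I would introduce the relabelling $\sigma : \mathbb{Z}_{2n} \to \mathbb{Z}_{2n+4}$ defined by $\sigma(v) = v$ for $v \le 2$ and $\sigma(v) = v+4$ for $v \ge 3$, and observe that the only edges of $Circ(2n,\{1,3\})$ whose $\sigma$-image fails to be an edge of $Circ(2n+4,\{1,3\})$ are exactly the four \emph{broken} edges $\{0,3\}, \{2,3\}, \{1,4\}, \{2,5\}$; the wrap-around edges $\{0,2n{-}1\}, \{0,2n{-}3\}, \{1,2n{-}2\}, \{2,2n{-}1\}$ all shift to valid edges because circular distance is preserved away from the insertion. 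I would then define $F'_i$ as the $\sigma$-images of the un-broken edges of $F_i$, together with an explicit matching $M_i$ chosen from the twelve edges of $Circ(2n+4,\{1,3\})$ incident to the newly inserted vertex set $\{3,4,5,6\}$:
\begin{align*}
M_1 &= \bigl\{\{2,3\},\{1,4\},\{5,8\},\{6,7\}\bigr\}, & M_2 &= \bigl\{\{2,5\},\{3,4\},\{6,9\}\bigr\},\\
M_3 &= \bigl\{\{3,6\},\{4,5\}\bigr\}, & M_4 &= \bigl\{\{0,3\},\{4,7\},\{5,6\}\bigr\}.
\end{align*}

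To finish, I would verify that $\mathcal{F}'$ is a 1-factorisation of $Circ(2n+4,\{1,3\})$ satisfying Condition~C, which reduces to four checks: (a) the twelve edges of $M_1 \cup M_2 \cup M_3 \cup M_4$ are exactly the twelve edges of $Circ(2n+4,\{1,3\})$ incident to $\{3,4,5,6\}$, each appearing in a unique $M_i$; (b) for each $i$, $M_i$ covers precisely the vertices left uncovered by the $\sigma$-images of $F_i$, namely $\{3,4,5,6\}$ together with the $\sigma$-images of the endpoints of the broken edges in $F_i$, so that the \emph{deficit sets} of $F'_1, F'_2, F'_3, F'_4$ are $\{1,2,3,4,5,6,7,8\}, \{2,3,4,5,6,9\}, \{3,4,5,6\}, \{0,3,4,5,6,7\}$ respectively; (c) each $M_i$ avoids the four edges forbidden by the gap at $(i{-}1, i)$ in $Circ(2n+4,\{1,3\})$; and (d) $\{2,3\} \in M_1 \subset F'_1$ and $\{1,2\} \in F'_4$ (the latter inherited from $F_4$ since both endpoints are $\le 2$). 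The main obstacle is the bookkeeping in (b): once the deficit sets are correctly computed, the matchings $M_i$ above are essentially forced by the requirement that $\bigcup_i M_i$ partition the twelve new-vertex edges while respecting the gap constraints, and the remaining verification then becomes mechanical.
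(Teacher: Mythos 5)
Your proposal is correct and is essentially the paper's own argument: the paper likewise inserts four new vertices and extends each 1-factor by two edges, describing the same object via four factor-specific shift maps (cutting $R$ between $0$ and $1$, $B$ between $1$ and $2$, etc.) rather than your single relabelling $\sigma$ plus repair matchings $M_i$. Indeed, using the edge-ownership facts you derive from Condition C and Lemma \ref{Lem: Inner edges of 4 gap}, one checks that your $F'_1,F'_2,F'_3,F'_4$ coincide edge-for-edge with the paper's $R',B',G',Y'$, so the difference is purely one of bookkeeping.
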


\begin{proof}
    Let \(\mathcal{F}\) be a 1-factorisation of \(Circ(2n,\left\lbrace 1,3 \right\rbrace)\) that satisfies Condition C.
    Label the 1-factors of \(\mathcal{F}\) by \(R, B, G, Y\) such that the order of the four gaps is \(R, B, G, Y\). It follows that \(\left\lbrace 2,3 \right\rbrace\in R\) and \(\left\lbrace 1,2 \right\rbrace\in Y\).

    We now use \(\mathcal{F}\) to construct a 1-factorisation, \(\mathcal{F}'=\left\lbrace R',B',G',Y' \right\rbrace\), of \(Circ(2n+4,\left\lbrace 1,3 \right\rbrace)\) as follows:

    \begin{align*}
        R' & = \left\lbrace \left\lbrace v_1+4,v_2+4 \right\rbrace \textrm{ if } \left\lbrace v_1,v_2 \right\rbrace \in E(R) \textrm{ and } 1 \leq v_1,v_2  \right\rbrace \\ &\qquad \cup \left\lbrace \left\lbrace 0,v_1 + 4  \right\rbrace \textrm{if} \left\lbrace 0,v_1 \right\rbrace \in E(R) \right\rbrace \\ & \qquad \cup \left\lbrace \left\lbrace 1,4 \right\rbrace,\left\lbrace 2,3 \right\rbrace \right\rbrace                                                                  \\
        B' & = \left\lbrace \left\lbrace v_1+4,v_2+4 \right\rbrace \textrm{ if } \left\lbrace v_1,v_2 \right\rbrace \in E(B) \textrm{ and } 2 \leq v_1,v_2  \right\rbrace
        \\ &\qquad \cup \left\lbrace
        \left\lbrace u_1,u_2 \right\rbrace \textrm{ if } \left\lbrace u_1,u_2 \right\rbrace \in E(B) \textrm{ and } 0 \leq u_1,u_2 \leq 1
        \right\rbrace
        \\ &\qquad \cup \left\lbrace \left\lbrace u,v_1 + 4  \right\rbrace \textrm{ if } \left\lbrace u_1,v_1 \right\rbrace \in E(B) \textrm{ and } 0 \leq u_1 \leq 1 \textrm{ and } 2 \leq v_1 \right\rbrace                                     \\ & \qquad \cup \left\lbrace \left\lbrace 2,5 \right\rbrace,\left\lbrace 3,4 \right\rbrace \right\rbrace                                                                                                                                                                   \\
        G' & = \left\lbrace \left\lbrace v_1+4,v_2+4 \right\rbrace \textrm{ if } \left\lbrace v_1,v_2 \right\rbrace \in E(G) \textrm{ and } 3 \leq v_1,v_2  \right\rbrace
        \\ &\qquad \cup \left\lbrace
        \left\lbrace u_1,u_2 \right\rbrace \textrm{ if } \left\lbrace u_1,u_2 \right\rbrace \in E(G) \textrm{ and } 0 \leq u_1,u_2 \leq 2
        \right\rbrace
        \\ &\qquad \cup \left\lbrace \left\lbrace u,v_1 + 4  \right\rbrace \textrm{ if } \left\lbrace u_1,v_1 \right\rbrace \in E(G) \textrm{ and } 0 \leq u_1 \leq 2 \textrm{ and } 3 \leq v_1 \right\rbrace                                     \\ & \qquad \cup \left\lbrace \left\lbrace 3,6 \right\rbrace,\left\lbrace 4,5 \right\rbrace \right\rbrace                                                                                                                                                                                                                           \\
        Y' & = \left\lbrace \left\lbrace v_1+4,v_2+4 \right\rbrace \textrm{ if } \left\lbrace v_1,v_2 \right\rbrace \in E(Y) \textrm{ and } 4 \leq v_1,v_2  \right\rbrace
        \\ &\qquad \cup \left\lbrace
        \left\lbrace u_1,u_2 \right\rbrace \textrm{ if } \left\lbrace u_1,u_2 \right\rbrace \in E(Y) \textrm{ and } 0 \leq u_1,u_2 \leq 3
        \right\rbrace
        \\ &\qquad \cup \left\lbrace \left\lbrace u,v_1 + 4  \right\rbrace \textrm{ if } \left\lbrace u_1,v_1 \right\rbrace \in E(Y) \textrm{ and } 0 \leq u_1 \leq 3 \textrm{ and } 4 \leq v_1 \right\rbrace                                     \\ & \qquad \cup \left\lbrace \left\lbrace 4,7 \right\rbrace,\left\lbrace 5,6 \right\rbrace \right\rbrace
    \end{align*}
    It is straightforward to confirm that \(\mathcal{F}'\) is a 1-factorisation of \(Circ(2n+4,\left\lbrace 1,3 \right\rbrace)\) that satisfies Condition C.
\end{proof}

We now consider the cycle structure of pairs of 1-factors of \(\mathcal{F}'\).

\begin{lemma} \label{Lem: Extension Cycle Types}
    Let \(\mathcal{F}\) be a 1-factorisation of \(Circ(2n,\left\lbrace 1,3 \right\rbrace)\) that satisfies Condition C.
    Label the 1-factors of \(\mathcal{F}\) by \(R, B, G, \textrm{and } Y\) such that the order of the four gaps is \(R, B, G, Y\). Further, let \(\mathcal{F}'=\left\lbrace R',B',G',Y' \right\rbrace\) be the 1-factorisation obtained from \(\mathcal{F}\) using the construction from Lemma \ref{Lem: Inductive Construction}.

    Let \(X\) and \(Y\) be 1-factors of \(\mathcal{F}\) that have type \(\left[ c_1, c_2, \dots, c_{t_{X \cup Y}} \right]\) and let \([C_1,C_2, \dots, C_{t_{X\cup Y}} ]\) be the cycles of \(X\cup Y\) where \(|V(C_i)| = c_i\). Then Table \ref{Tab: Pair of Factors types} gives the type of the corresponding 1-factors \(X'\) and \(Y'\) of \(\mathcal{F}'\), depending on which vertex condition is satisfied.

    \begin{table}[H]
        \centering
        \begin{tabular}{|l|l|l|l|}\hline
            \(X\cup Y\) & Type of \(X\cup Y\)                                  & Vertex Condition      & Type of \(X'\cup Y'\)                                    \\ \hline
            \(R\cup B\) & \(\left[ c_1, c_2, \dots, c_{t_{R \cup B}} \right]\) & \(1\in C_1\)          & \(\left[ c_1+4, c_2, \dots, c_{t_{R \cup B}} \right]\)   \\ \hline
            \(B\cup G\) & \(\left[ c_1, c_2, \dots, c_{t_{B \cup G}} \right]\) & \(2\in C_1\)          & \(\left[ c_1+4, c_2, \dots, c_{t_{B \cup G}} \right]\)   \\ \hline
            \(G\cup Y\) & \(\left[ c_1, c_2, \dots, c_{t_{G \cup Y}} \right]\) & \(3\in C_1\)          & \(\left[ c_1+4, c_2, \dots, c_{t_{G \cup Y}} \right]\)   \\ \hline
            \(R\cup G\) & \(\left[ c_1, c_2, \dots, c_{t_{R \cup G}} \right]\) & \(1,2\in C_1\)        & \(\left[ c_1+4, c_2, \dots, c_{t_{R \cup G}} \right]\)   \\ \hline
            \(R\cup G\) & \(\left[ c_1, c_2, \dots, c_{t_{R \cup G}} \right]\) & \(1\in C_1,2\in C_2\) & \(\left[ c_1+2, c_2+2, \dots, c_{t_{R \cup G}} \right]\) \\ \hline
            \(B\cup Y\) & \(\left[ c_1, c_2, \dots, c_{t_{B \cup Y}} \right]\) & \(2,3\in C_1\)        & \(\left[ c_1+4, c_2, \dots, c_{t_{B \cup Y}} \right]\)   \\ \hline
            \(B\cup Y\) & \(\left[ c_1, c_2, \dots, c_{t_{B \cup Y}} \right]\) & \(2\in C_1,3\in C_2\) & \(\left[ c_1+2, c_2+2, \dots, c_{t_{B \cup Y}} \right]\) \\ \hline
            \(R\cup Y\) & \(\left[ c_1, c_2, \dots, c_{t_{R \cup Y}} \right]\) & \(0\in C_1\)          & \(\left[ c_1+4, c_2, \dots, c_{t_{R \cup Y}} \right]\)   \\ \hline
        \end{tabular}
        \caption{The types of pairs of 1-factors of \(\mathcal{F}'\).}\label{Tab: Pair of Factors types}
    \end{table}

\end{lemma}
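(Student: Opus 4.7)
The plan is to verify each of the eight rows of Table \ref{Tab: Pair of Factors types} by directly tracing how cycles of \(X\cup Y\) transform into cycles of \(X'\cup Y'\) under the construction from Lemma \ref{Lem: Inductive Construction}. The essential observation underlying the proof is that the construction effectively ``cuts'' the old cyclic vertex order at a factor-dependent position (\(R'\) cuts between old \(0\) and old \(1\), \(B'\) between \(1\) and \(2\), \(G'\) between \(2\) and \(3\), and \(Y'\) between \(3\) and \(4\)) and inserts four new vertices covered by the two added edges of each factor. Consequently, every old edge of \(X\cup Y\) has a unique image in \(X'\cup Y'\)---either a literal preservation (when both endpoints lie in the kept range of both factors) or a shift of one or both endpoints by \(4\)---and these images together with the eight added edges produce the cycles of \(X'\cup Y'\).

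The key bookkeeping concept is that of a \emph{split} old vertex: within a given pair \((X',Y')\), an old vertex \(v\) is split whenever \(v\) lies in the kept range of exactly one of \(X',Y'\). In that case \(v\)'s two old edges land at two distinct new vertices and the added edges together with the purely-new vertices provide an alternating path reconnecting the two halves of \(v\). For instance, in \(R'\cup G'\) old vertex \(1\) is kept as new vertex \(1\) by \(G'\) but shifted to new vertex \(5\) by \(R'\), and the added edges \(\{1,4\}\in R'\) and \(\{4,5\}\in G'\) together with the purely-new vertex \(4\) form the alternating path \(1\!-\!4\!-\!5\) that reconnects the two halves. For each of the six pairs it is straightforward to enumerate the split vertices, the purely-new vertices, and the attachment targets of the resulting alternating path(s), and a short case-by-case computation shows that the total number of extra new vertices (splits plus purely-new) is always exactly \(4\).

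With this setup the eight rows are verified as follows. For the single-path rows (\(1\), \(2\), \(3\), and \(8\), corresponding to pairs \(R\cup B\), \(B\cup G\), \(G\cup Y\), and \(R\cup Y\)), the single alternating path attaches via its two endpoints to images of vertices in one old cycle \(C_1\), so \(C_1\) gains exactly \(4\) vertices and every other old cycle is preserved with unchanged length (being built entirely from images of old edges that never involve the ``special'' vertices \(\{0,1,2,3\}\)). For the two-path rows (\(4, 5, 6, 7\), corresponding to \(R\cup G\) and \(B\cup Y\)), the added edges form two disjoint alternating paths, each contributing exactly \(2\) extra vertices to the cycle it attaches to; if the two attachment cycles coincide (rows \(4, 6\)) then \(C_1\) gains \(2+2=4\) vertices, while if they are distinct (rows \(5, 7\)) then each of \(C_1\) and \(C_2\) independently gains \(2\) vertices and the two remain separate cycles in \(X'\cup Y'\).

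The principal obstacle is the enumeration itself, since each row requires identifying the attachment targets of its alternating path(s) using Condition~C and Lemma \ref{Lem: Inner edges of 4 gap}. To keep the presentation manageable I would write out one representative case in full---row \(5\) is a natural choice, since it exhibits both the vertex-splitting mechanism and the two-cycle behaviour---and verify the remaining seven rows via a compact auxiliary table listing the split vertices, the purely-new vertices, and the attachment targets for each pair. A minor additional check arises in the pair \((R',Y')\), where the preserved old edges \(\{0,3\}\in Y\) and \(\{1,2\}\in Y\) also participate in the insertion structure and lengthen the single inserted path into one running across nine consecutive new vertices; this case is handled directly using Condition~C and gives the same total of \(4\) extra vertices.
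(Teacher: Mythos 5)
Your proposal is correct and takes essentially the same approach as the paper: the paper also proves each row by tracing cycles through the construction, defining for each pair a relabelling map \(f_{X\cup Y}\) away from a small special vertex set and tabulating how paths through those special vertices are replaced by longer paths built from the eight added edges (including the longer nine-vertex replacement path in the \(R\cup Y\) case forced by Condition C and the inner-edge lemma), which is precisely your split-vertex/alternating-path bookkeeping. The only difference is presentational: the paper records this data in two auxiliary tables rather than via a worked representative row.
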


\begin{proof}
    Let \(V\) and \(V'\) be the vertex sets of \(Circ(2n,\left\lbrace 1,3 \right\rbrace)\) and \(Circ(2n+4,\left\lbrace 1,3 \right\rbrace)\) respectively. For each pair of 1-factors, \(X\) and \(Y\), we define a set of vertices \(S_{X \cup Y}\) and a function \(f_{X \cup Y}: V\setminus S_{X \cup Y} \mapsto V'\) as seen in Table \ref{Tab: Pair of Factors fs}. For any pair of 1-factors, \(X\) and \(Y\), any path \([v_1, v_2, \dots, v_{k} ]\) in \(X \cup Y\) that does not contain any vertices of \(S_{X \cup Y}\) will correspond to the path \([f(v_1), f(v_2), \dots, f(v_{k}) ]\) in \(X'\cup Y'\). Table \ref{Tab: Special Paths}, states what paths in \(X\cup Y\) that contain vertices of \(S_{X \cup Y}\) correspond to in \(X'\cup Y'\). The results follow from these two tables. Also observe that the vertex conditions of \(X\cup Y\) will continue to hold in \(X'\cup Y'\).
    \begin{table}[H]
        \centering
        \begin{tabular}{|l|l|l|}\hline
            \(X\cup Y\) & \(S_{X \cup Y}\)                          & \(f_{X \cup Y}\)                                 \\ \hline
            \(R\cup B\) & \(\left\lbrace 1 \right\rbrace\)          & \(
            f_{R\cup B}(v_i) = \begin{cases}
                                   v_i   & v_i = 0            \\
                                   v_i+4 & 2\leq v_i \leq n-1
                               \end{cases}
            \)                                                                                                         \\ \hline
            \(B\cup G\) & \(\left\lbrace 2 \right\rbrace  \)        & \(
            f_{B\cup G}(v_i) = \begin{cases}
                                   v_i   & 0 \leq v_i \leq 1   \\
                                   v_i+4 & 3 \leq v_i \leq n-1
                               \end{cases}
            \)                                                                                                         \\ \hline
            \(G\cup Y\) & \(\left\lbrace 3 \right\rbrace \)         & \(
            f_{G\cup Y}(v_i) = \begin{cases}
                                   v_i   & 0 \leq v_i \leq 2   \\
                                   v_i+4 & 4 \leq v_i \leq n-1
                               \end{cases}
            \)                                                                                                         \\ \hline
            \(R\cup G\) & \(\left\lbrace 1,2 \right\rbrace \)       & \(f_{R\cup G}(v_i) = \begin{cases}
                                                                                               v_i   & v_i = 0             \\
                                                                                               v_i+4 & 3 \leq v_i \leq n-1
                                                                                           \end{cases}  \) \\ \hline
            \(B\cup Y\) & \(\left\lbrace 2,3 \right\rbrace \)       & \(f_{B\cup Y}(v_i) = \begin{cases}
                                                                                               v_i   & 0 \leq v_i \leq 1   \\
                                                                                               v_i+4 & 4 \leq v_i \leq n-1
                                                                                           \end{cases}\) \\ \hline
            \(R\cup Y\) & \(\left\lbrace 0,1,2,3,4 \right\rbrace \) & \(f_{R\cup Y}(v_i) = \begin{cases}
                                                                                               v_i+4 & 5 \leq v_i \leq n-1
                                                                                           \end{cases}\) \\ \hline
        \end{tabular}
        \caption{Sets and functions for each pair of 1-factors of \(\mathcal{F}\).}\label{Tab: Pair of Factors fs}
    \end{table}
    \begin{table}[h]
        \centering
        \begin{tabular}{|l|l|l|}\hline
            \(X\cup Y\) & Path in \(X\cup Y\)     & Path in \(X'\cup Y'\)                 \\ \hline
            \(R\cup B\) & \([v_1,1,v_2]\)         & \([f(v_1),1,4,3,2,5, f(v_2)]\)        \\ \hline
            \(B\cup G\) & \([v_1,2,v_2]\)         & \([f(v_1),2,5,4,3,6, f(v_2)]\)        \\ \hline
            \(G\cup Y\) & \([v_1,3,v_2]\)         & \([f(v_1),3,6,5,4,7, f(v_2)]\)        \\ \hline
            \(R\cup G\) & \([v_1,1,v_2]\)         & \([f(v_1),1,4,5,f(v_2)]\)             \\ \hline
            \(R\cup G\) & \([u_1,2,u_2]\)         & \([f(u_1),2,3,6,f(u_2)]\)             \\ \hline
            \(B\cup Y\) & \([v_1,2,v_2]\)         & \([f(v_1),2,5,6,f(v_2)]\)             \\ \hline
            \(B\cup Y\) & \([u_1,3,u_2]\)         & \([f(u_1),3,4,7,f(u_2)]\)             \\ \hline
            \(R\cup Y\) & \([v_1,0,3,2,1,4,v_2]\) & \([f(v_1),0,3,2,1,4,7,6,5,8,f(v_2)]\) \\ \hline
        \end{tabular}
        \caption{Paths in \(X\cup Y\), and their corresponding paths in \({X'}\cup{Y'}\)}\label{Tab: Special Paths}
    \end{table}
\end{proof}

We let \(\mathcal{F}^k=\left\lbrace R^k,B^k,G^k,Y^k \right\rbrace\) be the 1-factorisation obtained from \(\mathcal{F}\) using the construction from Lemma \ref{Lem: Inductive Construction} \(k\) consecutive times.

\begin{corollary} \label{Cor: Types after repeated extension}
    Let \(\mathcal{F}\) be a 1-factorisation of \(Circ(2n,\left\lbrace 1,3 \right\rbrace)\) that satisfies Condition C.
    Label the 1-factors of \(\mathcal{F}\) by \(R, B, G, Y\) such that the order of the four gaps is \(R, B, G, Y\). 
    Table \ref{Tab: Pair of Factors types multiple} describes the type of each pair of 1-factors of \(\mathcal{F}^k\).
    \begin{table}[h]
        \centering
        \begin{tabular}{|l|l|l|l|}\hline
            \(X\cup Y\) & Type of \(X\cup Y\)                                  & Vertex Condition      & Type of \(X^k\cup Y^k\)                                    \\ \hline
            \(R\cup B\) & \(\left[ a_1, a_2, \dots, a_{t_{R \cup B}} \right]\) & \(1\in a_1\)          & \(\left[ a_1+4k, a_2, \dots, a_{t_{R \cup B}} \right]\)    \\ \hline
            \(B\cup G\) & \(\left[ a_1, a_2, \dots, a_{t_{B \cup G}} \right]\) & \(2\in a_1\)          & \(\left[ a_1+4k, a_2, \dots, a_{t_{B \cup G}} \right]\)    \\ \hline
            \(G\cup Y\) & \(\left[ a_1, a_2, \dots, a_{t_{G \cup Y}} \right]\) & \(3\in a_1\)          & \(\left[ a_1+4k, a_2, \dots, a_{t_{G \cup Y}} \right]\)    \\ \hline
            \(R\cup G\) & \(\left[ a_1, a_2, \dots, a_{t_{R \cup G}} \right]\) & \(1,2\in a_1\)        & \(\left[ a_1+4k, a_2, \dots, a_{t_{R \cup G}} \right]\)    \\ \hline
            \(R\cup G\) & \(\left[ a_1, a_2, \dots, a_{t_{R \cup G}} \right]\) & \(1\in a_1,2\in a_2\) & \(\left[ a_1+2k, a_2+2k, \dots, a_{t_{R \cup G}} \right]\) \\ \hline
            \(B\cup Y\) & \(\left[ a_1, a_2, \dots, a_{t_{B \cup Y}} \right]\) & \(2,3\in a_1\)        & \(\left[ a_1+4k, a_2, \dots, a_{t_{B \cup Y}} \right]\)    \\ \hline
            \(B\cup Y\) & \(\left[ a_1, a_2, \dots, a_{t_{B \cup Y}} \right]\) & \(2\in a_1,3\in a_2\) & \(\left[ a_1+2k, a_2+2k, \dots, a_{t_{B \cup Y}} \right]\) \\ \hline
            \(R\cup Y\) & \(\left[ a_1, a_2, \dots, a_{t_{R \cup Y}} \right]\) & \(0\in a_1\)          & \(\left[ a_1+4k, a_2, \dots, a_{t_{R \cup Y}} \right]\)    \\ \hline
        \end{tabular}
        \caption{The types of pairs of 1-factors of \(\mathcal{F}^k\).}\label{Tab: Pair of Factors types multiple}
    \end{table}
\end{corollary}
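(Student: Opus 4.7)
The plan is to prove this by straightforward induction on $k$, with Lemma \ref{Lem: Extension Cycle Types} supplying both the base case and the inductive step. The base case $k=1$ is precisely Lemma \ref{Lem: Extension Cycle Types}, after observing that the entries in Table \ref{Tab: Pair of Factors types multiple} reduce to those of Table \ref{Tab: Pair of Factors types} when $k=1$.

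For the inductive step, suppose the claim holds for some $k\geq 1$, so that $\mathcal{F}^{k}$ is a 1-factorisation of $Circ(2n+4k,\{1,3\})$ whose pair-of-1-factor types are as described in Table \ref{Tab: Pair of Factors types multiple}. First I would check that $\mathcal{F}^{k}$ still satisfies Condition C; this follows immediately from Lemma \ref{Lem: Inductive Construction} because the construction there is explicitly designed so that the output is a 1-factorisation satisfying Condition C (the new gaps are at $(0,1),(1,2),(2,3),(3,4)$ in the same order $R,B,G,Y$ and the edges $\{2,3\}\in R'$, $\{1,2\}\in Y'$ are inserted by definition). Hence the construction of Lemma \ref{Lem: Inductive Construction} may be applied to $\mathcal{F}^{k}$ to produce $\mathcal{F}^{k+1}$, and Lemma \ref{Lem: Extension Cycle Types} governs how the types of pairs of 1-factors change under this single application.

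Next I would verify that the vertex conditions used to index the rows of the table remain valid after one application. This is the step that requires a small amount of care: when the second column of Table \ref{Tab: Pair of Factors types multiple} says, for instance, that $1,2\in a_1$ for $R^{k}\cup G^{k}$, I need to ensure that the cycle of $R^{k+1}\cup G^{k+1}$ containing vertex $1$ (respectively $2$) is still the cycle whose length has just been increased by $4$. This is exactly the final observation in the proof of Lemma \ref{Lem: Extension Cycle Types}, namely that the vertex conditions of $X\cup Y$ continue to hold in $X'\cup Y'$; the function $f_{X\cup Y}$ fixes the distinguished low-index vertices (or replaces them along a short inserted sub-path through the newly added vertices $4,5,6,7$), and in every case the affected vertex stays on the same cycle that was lengthened. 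Combining this invariance with the inductive hypothesis, the $(k+1)$-st application of the cycle-length update from Table \ref{Tab: Pair of Factors types} sends $a_{1}+4k$ to $a_{1}+4(k+1)$ (or $a_1+2k,a_2+2k$ to $a_1+2(k+1),a_2+2(k+1)$ in the split cases), exactly as claimed.

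The only part of the argument that is non-routine is this persistence of the vertex conditions; every other ingredient is direct bookkeeping. I would therefore emphasise this step, but otherwise the proof is essentially an iteration of Lemma \ref{Lem: Extension Cycle Types} $k$ times, with the telescoping sum of the $+4$ or $+2$ length increments yielding the closed-form entries of Table \ref{Tab: Pair of Factors types multiple}.
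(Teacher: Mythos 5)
Your proposal is correct and follows exactly the route the paper intends: the corollary is stated without proof precisely because it is the $k$-fold iteration of Lemma \ref{Lem: Extension Cycle Types}, with Condition C preserved by Lemma \ref{Lem: Inductive Construction} and the vertex conditions preserved by the closing observation in the proof of Lemma \ref{Lem: Extension Cycle Types}. You have simply made the implicit induction explicit, correctly identifying the persistence of the vertex conditions as the one step that needs to be checked.
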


With the above we can now prove some existence results of \(m\)-B1Fs of \(Circ(2n,\left\lbrace 1,3 \right\rbrace)\).

\begin{theorem}\label{Thm: 2-B1Fs of Circ(n,{1,3})}
    The 4-regular circulant graph \(Circ(2n,\left\lbrace 1,3 \right\rbrace)\) admits a 2-B1F if and only if \(n \geq 5 \).
\end{theorem}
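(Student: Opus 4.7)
My plan is to split the argument into the two implications and rely on the inductive machinery just established for the sufficiency direction.

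For necessity I would observe that $Circ(2n,\{1,3\})$ is 4-regular only for $n\geq 4$ (since $Circ(6,\{1,3\})$ has $3=n$, so the $3$-edges coincide with the involution $\{v,v+n\}$ and the graph is only 3-regular; for $2n\leq 4$ the graph is degenerate), so only $n=4$ needs to be excluded. For $Circ(8,\{1,3\})$ I would run a short computer enumeration of all its 1-factorisations, in the same style as the enumeration checks used earlier in the paper, and verify that none has exactly two distinct pair-types.

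For sufficiency I would exhibit two explicit base 1-factorisations, $\mathcal{F}_5$ of $Circ(10,\{1,3\})$ and $\mathcal{F}_6$ of $Circ(12,\{1,3\})$, each satisfying Condition~C and each being a 2-B1F. Applying Lemma~\ref{Lem: Inductive Construction} repeatedly, iterated extension of $\mathcal{F}_5$ produces 1-factorisations of $Circ(2n,\{1,3\})$ for every odd $n\geq 5$, and iterated extension of $\mathcal{F}_6$ does so for every even $n\geq 6$, so the two base cases together cover all $n\geq 5$. What remains is to confirm via Corollary~\ref{Cor: Types after repeated extension} that each $\mathcal{F}_5^{\,k}$ and $\mathcal{F}_6^{\,k}$ is itself a 2-B1F.

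The delicate part is designing the base cases so that the 2-B1F property is preserved under extension. For a 2-B1F, the six pair-unions must split into two groups of three sharing a common type. Corollary~\ref{Cor: Types after repeated extension} describes how each pair's type evolves under one extension, and the evolution depends on a vertex condition governing which of the tracked vertices $\{0,1,2,3\}$ lies in which cycle. I would therefore choose $\mathcal{F}_5$ and $\mathcal{F}_6$ so that, within each intended group of three pairs, the base cycle structures and the vertex conditions align; this guarantees that the two groups remain mutually distinct types of equal multiplicity after any number of extensions. The main obstacle I anticipate is locating such compatible base cases: Condition~C together with Lemma~\ref{Lem: Inner edges of 4 gap} already fix several edges of $\mathcal{F}$, so the remaining freedom is tight. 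I would resolve this either by a targeted computer search over Condition~C 1-factorisations of $Circ(10,\{1,3\})$ and $Circ(12,\{1,3\})$, or by hand-tuned construction. Once the base cases are in hand, the proof reduces to reading off cycle types from the tables in Corollary~\ref{Cor: Types after repeated extension} and verifying the required 2-B1F split.
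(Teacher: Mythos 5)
Your overall strategy is the same as the paper's: rule out small orders by computer enumeration, then combine explicit Condition~C base cases with Lemma~\ref{Lem: Inductive Construction} and Corollary~\ref{Cor: Types after repeated extension} to cover two residue classes of orders modulo $4$. Your treatment of necessity is fine (and your degree observation for $n\leq 3$ is a slightly cleaner justification than a blanket computer check).

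The one genuine gap is in where you anchor the induction. You assert that there exist 1-factorisations $\mathcal{F}_5$ of $Circ(10,\{1,3\})$ and $\mathcal{F}_6$ of $Circ(12,\{1,3\})$ that simultaneously satisfy Condition~C, are 2-B1Fs, and have their vertex conditions aligned so that each group of three pair-unions evolves identically under extension; none of these objects is exhibited, and their existence is not obvious. Condition~C together with Lemma~\ref{Lem: Inner edges of 4 gap} already forces four edges near the gap window, which on only $10$ or $12$ vertices leaves very little freedom, and it is telling that the paper does \emph{not} do what you propose: it handles $n=5$ and $n=6$ by ad hoc explicit 2-B1Fs that are never claimed to satisfy Condition~C, and anchors the extension machinery only at orders $14$ and $16$ (covering $n\geq 7$). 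Your plan is therefore correct in architecture but contingent on a search that may come back empty; if it does, you must fall back to exactly the paper's arrangement --- separate explicit constructions for $n=5,6$ and Condition~C base cases at $2n=14$ and $2n=16$. The coverage arithmetic in either arrangement is correct, so this is the only point at which your write-up could fail.
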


\begin{proof}
    For \(n \leq 4\), we enumerated all possible 1-factorisations of \(Circ(2n,\left\lbrace 1,3 \right\rbrace)\) by computer and determined that none are 2-B1Fs. It remains to show that there exists a 2-B1F of \(Circ(2n, \left\lbrace 1,3 \right\rbrace)\) for \(n \geq 5 \). For \(n=5\) and \(n=6\) we give these explicitly.

    The following 1-factorisation of \(Circ(10,\left\lbrace 1,3 \right\rbrace)\) with factors \( R,B,G,Y\) is a 2-B1F with types  \((\left[10\right],\left[6,4\right])\).
    \begin{align*}
        R & = \left\lbrace \left\lbrace 0,1 \right\rbrace, \left\lbrace 2,3 \right\rbrace,\left\lbrace 4,5 \right\rbrace,\left\lbrace 6,7 \right\rbrace,\left\lbrace 8,9 \right\rbrace \right\rbrace \\
        B & = \left\lbrace \left\lbrace 0,3 \right\rbrace,\left\lbrace 1,2 \right\rbrace,\left\lbrace 4,7 \right\rbrace,\left\lbrace 5,8 \right\rbrace,\left\lbrace 6,9 \right\rbrace \right\rbrace  \\
        G & = \left\lbrace \left\lbrace 0,9 \right\rbrace,\left\lbrace 1,4 \right\rbrace,\left\lbrace 2,5 \right\rbrace,\left\lbrace 3,6 \right\rbrace,\left\lbrace 7,8 \right\rbrace \right\rbrace  \\
        Y & = \left\lbrace \left\lbrace 0,7 \right\rbrace,\left\lbrace 1,8 \right\rbrace,\left\lbrace 2,9 \right\rbrace,\left\lbrace 3,4 \right\rbrace,\left\lbrace 5,6 \right\rbrace \right\rbrace
    \end{align*}

    The following 1-factorisation of \(Circ(12,\left\lbrace 1,3 \right\rbrace)\) with factors \( R,B,G,Y\) is a 2-B1F with types  \((\left[12\right],\left[8,4\right])\).
    \begin{align*}
        R & = \left\lbrace
        \left\lbrace
        0,1
        \right\rbrace,
        \left\lbrace
        2,5
        \right\rbrace,
        \left\lbrace
        3,4
        \right\rbrace,
        \left\lbrace
        6,9
        \right\rbrace,
        \left\lbrace
        7,10
        \right\rbrace,
        \left\lbrace
        8,11
        \right\rbrace
        \right\rbrace
        \\
        B & = \left\lbrace
        \left\lbrace
        0,3
        \right\rbrace,
        \left\lbrace
        1,4
        \right\rbrace,
        \left\lbrace
        2,11
        \right\rbrace,
        \left\lbrace
        5,6
        \right\rbrace,
        \left\lbrace
        7,8
        \right\rbrace,
        \left\lbrace
        9,10
        \right\rbrace
        \right\rbrace
        \\
        G & = \left\lbrace
        \left\lbrace
        0,11
        \right\rbrace,
        \left\lbrace
        1,10
        \right\rbrace,
        \left\lbrace
        2,3
        \right\rbrace,
        \left\lbrace
        4,5
        \right\rbrace,
        \left\lbrace
        6,7
        \right\rbrace,
        \left\lbrace
        8,9
        \right\rbrace
        \right\rbrace
        \\
        Y & = \left\lbrace
        \left\lbrace
        0,9
        \right\rbrace,
        \left\lbrace
        1,2
        \right\rbrace,
        \left\lbrace
        3,6
        \right\rbrace,
        \left\lbrace
        4,7
        \right\rbrace,
        \left\lbrace
        5,8
        \right\rbrace,
        \left\lbrace
        10,11
        \right\rbrace
        \right\rbrace
    \end{align*}

    For \(n \geq 7 \) we will show the existence of 2-B1Fs of \(Circ(16+4k,\left\lbrace 1,3 \right\rbrace)\) and \(Circ(14+4k,\left\lbrace 1,3 \right\rbrace)\) for all \(k \geq 0 \) seperately.

    The following 1-factorisation, \(\mathcal{F}_\gamma=\left\lbrace R,B,G,Y \right\rbrace\), of \(Circ(16,\left\lbrace 1,3 \right\rbrace)\) is a 2-B1F with types \(\left(\left[16\right],\left[12,4\right]\right)\).
    \begin{align*}
        R & = \left\lbrace\left\lbrace0, 15\right\rbrace, \left\lbrace1, 4\right\rbrace, \left\lbrace2, 3\right\rbrace, \left\lbrace5, 8\right\rbrace, \left\lbrace6, 9\right\rbrace, \left\lbrace7, 10\right\rbrace, \left\lbrace11, 14\right\rbrace, \left\lbrace12, 13\right\rbrace\right\rbrace \\
        B & = \left\lbrace\left\lbrace0, 13\right\rbrace, \left\lbrace1, 14\right\rbrace, \left\lbrace2, 5\right\rbrace, \left\lbrace3, 4\right\rbrace, \left\lbrace6, 7\right\rbrace, \left\lbrace8, 9\right\rbrace, \left\lbrace10, 11\right\rbrace, \left\lbrace12, 15\right\rbrace\right\rbrace \\
        G & = \left\lbrace\left\lbrace0, 1\right\rbrace, \left\lbrace2, 15\right\rbrace, \left\lbrace3, 6\right\rbrace,\left\lbrace4, 5\right\rbrace, \left\lbrace7, 8\right\rbrace, \left\lbrace9, 10\right\rbrace, \left\lbrace11, 12\right\rbrace, \left\lbrace13, 14\right\rbrace\right\rbrace  \\
        Y & = \left\lbrace\left\lbrace0, 3\right\rbrace, \left\lbrace1, 2\right\rbrace, \left\lbrace4, 7\right\rbrace, \left\lbrace5, 6\right\rbrace, \left\lbrace8, 11\right\rbrace, \left\lbrace9, 12\right\rbrace, \left\lbrace10, 13\right\rbrace, \left\lbrace14, 15\right\rbrace\right\rbrace
    \end{align*}
    It is straightforward to check that \(\mathcal{F}_\gamma\) satisfies Condition C. It is also straightforward to check that
    \begin{multicols}{2}
    \begin{itemize}
        \item \(1\) lies on a \(12\)-cycle in \(R\cup B\),
        \item \(2\) lies on a \(12\)-cycle in \(B\cup G\),
        \item \(3\) lies on a \(16\)-cycle in \(G\cup Y\),
    \end{itemize}
    \columnbreak
    \begin{itemize}
        \item \(1\) and \(2\) lie on a \(12\)-cycle in \(R\cup G\),
        \item \(2\) and \(3\) lie on a \(16\)-cycle in \(B\cup Y\),
        \item \(0\) lies on a \(16\)-cycle in \(R\cup Y\).
    \end{itemize}
    \end{multicols}
    Thus, by Corollary \ref{Cor: Types after repeated extension}, \(\mathcal{F}_\gamma^k\) is a 1-factorisation of \(Circ(16+4k,\left\lbrace 1,3 \right\rbrace)\) that will have three pairs of 1-factors with type \([16+4k]\) and three pairs of 1-factors with type \([12+4k,4]\). As these types are clearly distinct for every integer \(k\), \(\mathcal{F}_\gamma^k\) is a 2-B1F of \(Circ(16+4k,\left\lbrace 1,3 \right\rbrace)\) with types \(([16+4k],[12+4k,4])\) for all \(k \geq 0 \).

    Now, we claim that the following 1-factorisation, \(\mathcal{F}_\delta=\left\lbrace R,B,G,Y \right\rbrace\), of \(Circ(14,\left\lbrace 1,3 \right\rbrace)\) is a 2-B1F with types \(\left(\left[14\right],\left[10,4\right]\right)\).
    \begin{align*}
        R & = \left\lbrace\left\lbrace0, 13\right\rbrace, \left\lbrace1, 4\right\rbrace, \left\lbrace2, 3\right\rbrace, \left\lbrace5, 8\right\rbrace, \left\lbrace6, 7\right\rbrace, \left\lbrace9, 12\right\rbrace, \left\lbrace10, 11\right\rbrace\right\rbrace \\
        B & = \left\lbrace\left\lbrace0, 1\right\rbrace, \left\lbrace2, 5\right\rbrace, \left\lbrace3, 6\right\rbrace, \left\lbrace4, 7\right\rbrace, \left\lbrace8, 9\right\rbrace, \left\lbrace10, 13\right\rbrace, \left\lbrace11, 12\right\rbrace\right\rbrace \\
        G & = \left\lbrace\left\lbrace0, 11\right\rbrace, \left\lbrace1, 12\right\rbrace, \left\lbrace2, 13\right\rbrace, \left\lbrace3, 4\right\rbrace, \left\lbrace5, 6\right\rbrace, \left\lbrace7, 8\right\rbrace, \left\lbrace9, 10\right\rbrace\right\rbrace \\
        Y & = \left\lbrace\left\lbrace0, 3\right\rbrace, \left\lbrace1, 2\right\rbrace, \left\lbrace4, 5\right\rbrace, \left\lbrace6, 9\right\rbrace, \left\lbrace7, 10\right\rbrace, \left\lbrace8, 11\right\rbrace, \left\lbrace12, 13\right\rbrace\right\rbrace
    \end{align*}
    It is straightforward to check that \(\mathcal{F}_\delta\) satisfies Condition C. It is also clear that
    \begin{multicols}{2}
    \begin{itemize}
        \item \(1\) lies on a \(14\)-cycle in \(R\cup B\),
        \item \(2\) lies on a \(10\)-cycle in \(B\cup G\),
        \item \(3\) lies on a \(10\)-cycle in \(G\cup Y\),
    \end{itemize}
    \columnbreak
    \begin{itemize}
        \item \(1\) and \(2\) lie on a \(10\)-cycle in \(R\cup G\),
        \item \(2\) and \(3\) lie on a \(14\)-cycle in \(B\cup Y\),
        \item and \(0\) lies on a \(14\)-cycle in \(R\cup Y\).
    \end{itemize}
\end{multicols}
    Thus, by Corollary \ref{Cor: Types after repeated extension}, \(\mathcal{F}_\delta^k\) is a 1-factorisation of \(Circ(14+4k,\left\lbrace 1,3 \right\rbrace)\) that will have three pairs of 1-factors with type \([14+4k]\) and three pairs of 1-factors with type \([10+4k,4]\). As these types are clearly distinct for every integer \(k\), \(\mathcal{F}_\delta^k\) is a 2-B1F of \(Circ(14+4k,\left\lbrace 1,3 \right\rbrace)\) with types \(([14+4k],[10+4k,4])\) for all \(k \geq 0 \).

    Hence, we have shown that \(Circ(2n,\left\lbrace 1,3 \right\rbrace)\) admits a 2-B1F if and only if \(n \geq 5 \).
\end{proof}

\begin{theorem}\label{Thm: 3-B1Fs of Circ(n,{1,3})}
    The 4-regular circulant graph \(Circ(2n,\left\lbrace 1,3 \right\rbrace)\) admits a 3-B1F if and only if \(n \geq 6 \).
\end{theorem}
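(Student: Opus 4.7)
The plan is to mirror the strategy used for Theorem \ref{Thm: 2-B1Fs of Circ(n,{1,3})}: rule out small \(n\) by exhaustive enumeration, construct two base 1-factorisations satisfying Condition C, and then iterate Lemma \ref{Lem: Inductive Construction} to cover all larger \(n\).

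For the ``only if'' direction, note that \(Circ(2n,\{1,3\})\) is only 4-regular for \(n \geq 4\) (since \(3 \equiv -3 \pmod 6\) collapses the connection set when \(n = 3\)), so the small cases reduce to \(n \in \{4,5\}\). I would handle these by a computer enumeration of all 1-factorisations of \(Circ(2n,\{1,3\})\) and verify directly that none is a 3-B1F, as was done for the corresponding small cases of the 2-B1F theorem.

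For the ``if'' direction, since Lemma \ref{Lem: Inductive Construction} sends \(n\) to \(n+2\), it suffices to exhibit 3-B1Fs for \(n = 6\) and \(n = 7\) that satisfy Condition C; iteration then covers all \(n \geq 6\). I would construct explicit 1-factorisations \(\mathcal{F}_\epsilon\) of \(Circ(12,\{1,3\})\) and \(\mathcal{F}_\zeta\) of \(Circ(14,\{1,3\})\), label their 1-factors \(R, B, G, Y\) so that the gap order matches Condition C, and verify that the six pair-unions partition into exactly three types with each type appearing in two pairs. I would then invoke Corollary \ref{Cor: Types after repeated extension} to track how each type evolves under the \(k\)-fold extension: for each pair of 1-factors, I would identify which cycle contains the relevant vertex or vertices among \(\{0,1,2,3\}\), select the appropriate row of Table \ref{Tab: Pair of Factors types multiple}, and confirm that the three balanced type classes of \(\mathcal{F}_\epsilon^k\) and \(\mathcal{F}_\zeta^k\) remain pairwise distinct for every \(k \geq 0\).

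The main obstacle is designing the base 1-factorisations so that the 3-B1F property is preserved by every extension. The corollary treats \(R \cup G\) and \(B \cup Y\) in a bifurcated manner --- depending on whether the pair \(\{1,2\}\) (respectively \(\{2,3\}\)) lies on one cycle or two, the type is shifted either as \(c_1 \mapsto c_1 + 4k\) or as \((c_1, c_2) \mapsto (c_1 + 2k, c_2 + 2k)\) --- whereas the remaining four pair-unions always undergo a single \(+4k\) shift on one cycle. To keep the three balanced classes intact for all \(k\), the base cases must pair pair-unions with matching shift behaviour: for example, pairing \(R \cup Y\) with one of \(R \cup B, B \cup G, G \cup Y\) (all four of which shift by \(+4k\) on a single cycle), and pairing \(R \cup G\) with \(B \cup Y\) in a consistent bifurcation state so their type sequences agree as functions of \(k\). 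Once a viable pairing pattern is fixed, producing the base cases reduces to a constrained search over 1-factorisations satisfying both Condition C and the matching-of-types requirement, and the resulting explicit examples can then be checked by direct verification.
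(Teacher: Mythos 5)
Your overall strategy coincides with the paper's: eliminate \(n\le 5\) by computer enumeration, then combine explicit base 1-factorisations satisfying Condition C with Lemma \ref{Lem: Inductive Construction} and Corollary \ref{Cor: Types after repeated extension}, working in two residue classes because the extension sends \(n\) to \(n+2\). You have also correctly isolated the delicate point: the two bifurcated pairs \(R\cup G\) and \(B\cup Y\) must be matched so that both members of each balanced class evolve identically under the \(k\)-fold extension and the three classes remain pairwise distinct for all \(k\ge 0\).

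The gap is the assertion that ``it suffices to exhibit 3-B1Fs for \(n=6\) and \(n=7\) that satisfy Condition C'': you never establish that such base cases exist, and once the machinery is in place this existence claim \emph{is} the content of the ``if'' direction. Deferring it to ``a constrained search over 1-factorisations satisfying both Condition C and the matching-of-types requirement'' leaves the proof incomplete, and there is concrete reason to doubt the search succeeds at those orders: the paper does not start its induction at \(n=6,7\). Instead it gives standalone 3-B1Fs for \(n=6,7,8\) (making no claim that they satisfy Condition C, and not using them as induction bases) and launches the extension only from base cases on \(18\) and \(20\) vertices, i.e.\ \(n=9\) and \(n=10\). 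Condition C is a strong local constraint (four sequential gaps plus two prescribed edges), and on \(12\) or \(14\) vertices it may be incompatible with the 3-B1F property together with the required consistent bifurcation behaviour; note also that some type combinations genuinely collide under extension (for instance at \(n=7\), a class of type \(\left[10,4\right]\) evolving as \(\left[10+2k,4+2k\right]\) meets a class of type \(\left[8,6\right]\) evolving as \(\left[8+4k,6\right]\) at \(k=1\), both giving \(\left[12,6\right]\)), so the matching requirement is not vacuous. To close the argument you must either exhibit valid base cases at \(n=6,7\), or do what the paper does: push the induction base up to \(n=9,10\) and supply separate explicit 3-B1Fs for the intervening values \(n=6,7,8\).
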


\begin{proof}
    For even \(n \leq 5\), we enumerated all possible 1-factorisations of \(Circ(2n,\left\lbrace 1,3 \right\rbrace)\) by computer and determined that none are 3-B1Fs. Thus it remains to show that there exists a 3-B1F of \(Circ(2n, \left\lbrace 1,3 \right\rbrace)\) for all \(n \geq 6 \). For \(n=6,7, \text{ and } 8\) we give these explicitly.

    When \(n=6\) we define the 1-factorisation of \(Circ(12,\left\lbrace 1,3 \right\rbrace)\) \(\mathcal{F}_\alpha=\left\lbrace F_1,F_2,F_3,F_4 \right\rbrace\) as follows:
    \begin{align*}
        F_a & = \left\lbrace \left\lbrace 0,1 \right\rbrace, \left\lbrace 2,3 \right\rbrace,\left\lbrace 4,5 \right\rbrace,\left\lbrace 6,7 \right\rbrace,\left\lbrace 8,9 \right\rbrace,\left\lbrace 10,11 \right\rbrace \right\rbrace, \\
        F_b & = \left\lbrace \left\lbrace 0,3 \right\rbrace,\left\lbrace 1,2 \right\rbrace,\left\lbrace 4,7 \right\rbrace,\left\lbrace 5,6 \right\rbrace,\left\lbrace 8,11 \right\rbrace,\left\lbrace 9,10 \right\rbrace \right\rbrace,  \\
        F_c & = \left\lbrace \left\lbrace 0,11 \right\rbrace,\left\lbrace 1,10 \right\rbrace,\left\lbrace 2,5 \right\rbrace,\left\lbrace 3,4 \right\rbrace,\left\lbrace 6,9 \right\rbrace, \left\lbrace 7,8 \right\rbrace \right\rbrace, \\
        F_d & = \left\lbrace \left\lbrace 0,9 \right\rbrace,\left\lbrace 1,4 \right\rbrace,\left\lbrace 2,11 \right\rbrace,\left\lbrace 3,6 \right\rbrace,\left\lbrace 5,8 \right\rbrace,\left\lbrace 7,10 \right\rbrace \right\rbrace.
    \end{align*}
    It is straightforward to confirm that \(\mathcal{F}_\alpha\) is a 3-B1F with types \(\left(\left[12\right],\left[6^2\right],\left[4^3\right]\right)\).

    When \(n=7\) we define the 1-factorisation of \(Circ(14,\{1,3\})\) \(\mathcal{F}_\beta=\left\lbrace F_1,F_2,F_3,F_4 \right\rbrace\) as follows:
    \begin{align*}
        F_1 & = \left\lbrace
        \left\lbrace
        0,1
        \right\rbrace,
        \left\lbrace
        2,3
        \right\rbrace,
        \left\lbrace
        4,5
        \right\rbrace,
        \left\lbrace
        6,7
        \right\rbrace,
        \left\lbrace
        8,9
        \right\rbrace,
        \left\lbrace
        10,13
        \right\rbrace,
        \left\lbrace
        11,12
        \right\rbrace
        \right\rbrace        \\
        F_2 & = \left\lbrace
        \left\lbrace
        0,13
        \right\rbrace,
        \left\lbrace
        1,2
        \right\rbrace,
        \left\lbrace
        3,4
        \right\rbrace,
        \left\lbrace
        5,6
        \right\rbrace,
        \left\lbrace
        7,8
        \right\rbrace,
        \left\lbrace
        9,12
        \right\rbrace,
        \left\lbrace
        10,11
        \right\rbrace
        \right\rbrace        \\
        F_3 & = \left\lbrace
        \left\lbrace
        0,3
        \right\rbrace,
        \left\lbrace
        1,4
        \right\rbrace,
        \left\lbrace
        2,5
        \right\rbrace,
        \left\lbrace
        6,9
        \right\rbrace,
        \left\lbrace
        7,10
        \right\rbrace,
        \left\lbrace
        8,11
        \right\rbrace,
        \left\lbrace
        12,13
        \right\rbrace
        \right\rbrace        \\
        F_4 & = \left\lbrace
        \left\lbrace
        0,11
        \right\rbrace,
        \left\lbrace
        1,12
        \right\rbrace,
        \left\lbrace
        2,13
        \right\rbrace,
        \left\lbrace
        3,6
        \right\rbrace,
        \left\lbrace
        4,7
        \right\rbrace,
        \left\lbrace
        5,8
        \right\rbrace,
        \left\lbrace
        9,10
        \right\rbrace
        \right\rbrace
    \end{align*}

    It is straightforward to confirm that \(\mathcal{F}_\beta\) is a 3-B1F with types \((\left[14\right],\left[10,4\right],\left[8,6\right])\).

    When \(n=8\) we define the 1-factorisation of \(Circ(16,\{1,3\})\) \(\mathcal{F}_\gamma=\left\lbrace F_1,F_2,F_3,F_4 \right\rbrace\) as follows:
    \begin{align*}
        F_a & = \left\lbrace
        \left\lbrace
        0,1
        \right\rbrace,
        \left\lbrace
        2,3
        \right\rbrace,
        \left\lbrace
        4,5
        \right\rbrace,
        \left\lbrace
        6,9
        \right\rbrace,
        \left\lbrace
        7,8
        \right\rbrace,
        \left\lbrace
        10,13
        \right\rbrace,
        \left\lbrace
        11,14
        \right\rbrace,
        \left\lbrace
        12,15
        \right\rbrace
        \right\rbrace        \\
        F_b & = \left\lbrace
        \left\lbrace
        0,3
        \right\rbrace,
        \left\lbrace
        1,4
        \right\rbrace,
        \left\lbrace
        2,15
        \right\rbrace,
        \left\lbrace
        5,6
        \right\rbrace,
        \left\lbrace
        7,10
        \right\rbrace,
        \left\lbrace
        8,11
        \right\rbrace,
        \left\lbrace
        9,12
        \right\rbrace,
        \left\lbrace
        13,14
        \right\rbrace
        \right\rbrace        \\
        F_c & = \left\lbrace
        \left\lbrace
        0,15
        \right\rbrace,
        \left\lbrace
        1,14
        \right\rbrace,
        \left\lbrace
        2,5
        \right\rbrace,
        \left\lbrace
        3,6
        \right\rbrace,
        \left\lbrace
        4,7
        \right\rbrace,
        \left\lbrace
        8,9
        \right\rbrace,
        \left\lbrace
        10,11
        \right\rbrace,
        \left\lbrace
        12,13
        \right\rbrace
        \right\rbrace        \\
        F_d & = \left\lbrace
        \left\lbrace
        0,13
        \right\rbrace,
        \left\lbrace
        1,2
        \right\rbrace,
        \left\lbrace
        3,4
        \right\rbrace,
        \left\lbrace
        5,8
        \right\rbrace,
        \left\lbrace
        6,7
        \right\rbrace,
        \left\lbrace
        9,10
        \right\rbrace,
        \left\lbrace
        11,12
        \right\rbrace,
        \left\lbrace
        14,15
        \right\rbrace
        \right\rbrace
    \end{align*}

    It is straightforward to confirm that \(\mathcal{F}_\gamma\) is a 3-B1F with types \((\left[12,4\right],\left[10,6\right],\left[8,8\right])\).

    For \(n \geq 9 \) we will show the existence of 3-B1Fs of \(Circ(18+4k,\left\lbrace 1,3 \right\rbrace)\) and \(Circ(20+4k,\left\lbrace 1,3 \right\rbrace)\) for all \(k \geq 0 \) seperately.

    We claim that the following 1-factorisation, \(\mathcal{F}_\gamma=\left\lbrace R,B,G,Y \right\rbrace\), of \(Circ(18,\left\lbrace 1,3 \right\rbrace)\) is a 3-B1F with types \(\left(\left[18\right],\left[14,4\right],\left[12,6\right]\right)\).
    \begin{align*}
        R & = \left\lbrace\left\lbrace0, 17\right\rbrace, \left\lbrace1, 4\right\rbrace, \left\lbrace2, 3\right\rbrace, \left\lbrace5, 6\right\rbrace, \left\lbrace7, 8\right\rbrace, \left\lbrace9, 10\right\rbrace, \left\lbrace11, 12\right\rbrace, \left\lbrace13, 16\right\rbrace, \left\lbrace18, 19\right\rbrace\right\rbrace \\
        B & = \left\lbrace\left\lbrace0, 19\right\rbrace, \left\lbrace1, 18\right\rbrace, \left\lbrace2, 5\right\rbrace, \left\lbrace3, 4\right\rbrace, \left\lbrace6, 7\right\rbrace, \left\lbrace8, 9\right\rbrace, \left\lbrace10, 11\right\rbrace, \left\lbrace12, 13\right\rbrace, \left\lbrace16, 17\right\rbrace\right\rbrace \\
        G & = \left\lbrace\left\lbrace0, 1\right\rbrace, \left\lbrace2, 19\right\rbrace, \left\lbrace3, 6\right\rbrace, \left\lbrace4, 7\right\rbrace, \left\lbrace5, 8\right\rbrace, \left\lbrace9, 12\right\rbrace, \left\lbrace10, 13\right\rbrace, \left\lbrace11, 16\right\rbrace, \left\lbrace17, 18\right\rbrace\right\rbrace \\
        Y & = \left\lbrace\left\lbrace0, 3\right\rbrace, \left\lbrace1, 2\right\rbrace, \left\lbrace4, 5\right\rbrace, \left\lbrace6, 9\right\rbrace, \left\lbrace7, 10\right\rbrace, \left\lbrace8, 11\right\rbrace, \left\lbrace12, 17\right\rbrace, \left\lbrace13, 18\right\rbrace, \left\lbrace16, 19\right\rbrace\right\rbrace
    \end{align*}
    It is straightforward to check that \(\mathcal{F}_\gamma\) satisfies Condition C. It is also straightforward to check that
    \begin{multicols}{2}
    \begin{itemize}
        \item \(1\) lies on a \(18\)-cycle in \(R\cup B\),
        \item \(2\) lies on a \(14\)-cycle in \(B\cup G\),
        \item \(3\) lies on a \(18\)-cycle in \(G\cup Y\),
    \end{itemize}
    \columnbreak
    \begin{itemize}
        \item \(1\) and \(2\) lie on a \(12\)-cycle in \(R\cup G\),
        \item \(2\) and \(3\) lie on a \(12\)-cycle in \(B\cup Y\),
        \item \(0\) and lies on a \(14\)-cycle in \(R\cup Y\).
              % \item and \(\left\lbrace 16+4 \right\rbrace \neq \left\lbrace 12+4,4 \right\rbrace\)
    \end{itemize}
\end{multicols}
    Thus, by Corollary \ref{Cor: Types after repeated extension}, \(\mathcal{F}_\gamma^k\) is a 1-factorisation of \(Circ(18+4k,\left\lbrace 1,3 \right\rbrace)\) that will have two pairs of 1-factors with type \([18+4k]\), two pairs of 1-factors with type \([14+4k,4]\), and two pairs of 1-factors with type \([12+4k,6]\). As these types are clearly distinct for every integer \(k\), \(\mathcal{F}_\gamma^k\) is a 3-B1F of \(Circ(18+4k,\left\lbrace 1,3 \right\rbrace)\) with types \(\left(\left[18+4k\right],\left[14+4k,4\right],\left[12+4k,6\right]\right)\) for all \(k \geq 0 \).

    Now, we claim that the following 1-factorisation, \(\mathcal{F}_\delta=\left\lbrace R,B,G,Y \right\rbrace\), of \(Circ(20,\left\lbrace 1,3 \right\rbrace)\) is a 3-B1F with types \(\left(\left[16,4\right],\left[14,6\right],\left[12,8\right]\right)\).
    \begin{align*}
        R & = \left\lbrace\left\lbrace0, 19\right\rbrace, \left\lbrace1, 4\right\rbrace, \left\lbrace2, 3\right\rbrace, \left\lbrace5, 8\right\rbrace, \left\lbrace6, 9\right\rbrace, \left\lbrace7, 10\right\rbrace, \left\lbrace11, 14\right\rbrace, \left\lbrace12, 13\right\rbrace, \left\lbrace15, 16\right\rbrace, \left\lbrace17, 18\right\rbrace\right\rbrace \\
        B & = \left\lbrace\left\lbrace0, 17\right\rbrace, \left\lbrace1, 18\right\rbrace, \left\lbrace2, 5\right\rbrace, \left\lbrace3, 4\right\rbrace, \left\lbrace6, 7\right\rbrace, \left\lbrace8, 11\right\rbrace, \left\lbrace9, 12\right\rbrace, \left\lbrace10, 13\right\rbrace, \left\lbrace14, 15\right\rbrace, \left\lbrace16, 19\right\rbrace\right\rbrace \\
        G & = \left\lbrace\left\lbrace0, 1\right\rbrace, \left\lbrace2, 19\right\rbrace, \left\lbrace3, 6\right\rbrace, \left\lbrace4, 5\right\rbrace, \left\lbrace7, 8\right\rbrace, \left\lbrace9, 10\right\rbrace, \left\lbrace11, 12\right\rbrace, \left\lbrace13, 16\right\rbrace, \left\lbrace14, 17\right\rbrace, \left\lbrace15, 18\right\rbrace\right\rbrace \\
        Y & = \left\lbrace\left\lbrace0, 3\right\rbrace, \left\lbrace1, 2\right\rbrace, \left\lbrace4, 7\right\rbrace, \left\lbrace5, 6\right\rbrace, \left\lbrace8, 9\right\rbrace, \left\lbrace10, 11\right\rbrace, \left\lbrace12, 15\right\rbrace, \left\lbrace13, 14\right\rbrace, \left\lbrace16, 17\right\rbrace, \left\lbrace18, 19\right\rbrace\right\rbrace
    \end{align*}
    It is straightforward to check that \(\mathcal{F}_\delta\) satisfies Condition C. It is also straightforward to check that
    \begin{multicols}{2}
    \begin{itemize}
        \item \(1\) lies on a \(14\)-cycle in \(R\cup B\),
        \item \(2\) lies on a \(14\)-cycle in \(B\cup G\),
        \item \(3\) lies on a \(16\)-cycle in \(G\cup Y\),
    \end{itemize}
    \columnbreak
    \begin{itemize}
        \item \(1\) and \(2\) lie on a \(12\)-cycle in \(R\cup G\),
        \item \(2\) and \(3\) lie on a \(12\)-cycle in \(B\cup Y\),
        \item \(0\) lies on a \(16\)-cycle in \(R\cup Y\),
    \end{itemize}
\end{multicols}
    Thus, by Corollary \ref{Cor: Types after repeated extension}, \(\mathcal{F}_\delta^k\) is a 1-factorisation of \(Circ(20+4k,\left\lbrace 1,3 \right\rbrace)\) that will have two pairs of 1-factors with type \([16+4k,4]\), two pairs of 1-factors with type \([14+4k,6]\), and two pairs of 1-factors with type \([12+4k,8]\). As these types are clearly distinct for every integer \(k\), \(\mathcal{F}_\gamma^k\) is a 3-B1F of \(Circ(20+4k,\left\lbrace 1,3 \right\rbrace)\) with types \(\left(\left[16+4k,4\right],\left[14+4k,6\right],\left[12+4k,8\right]\right)\) for all \(k \geq 0 \).

    Thus, we have shown that \(Circ(2n,\left\lbrace 1,3 \right\rbrace)\) admits a 3-B1F if and only if \(n \geq 6 \).
\end{proof}

\begin{theorem}\label{Thm: 6-B1Fs of Circ(n,{1,3})}
    The 4-regular circulant graph \(Circ(2n,\left\lbrace 1,3 \right\rbrace)\) admits a 6-B1F if and only if \(n \geq 9 \).
\end{theorem}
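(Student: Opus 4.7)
The plan is to adapt the strategy of Theorems \ref{Thm: 2-B1Fs of Circ(n,{1,3})} and \ref{Thm: 3-B1Fs of Circ(n,{1,3})}. Since $\binom{4}{2}=6$, a 6-B1F of a 4-regular graph requires the six pairs of 1-factors to have pairwise distinct types. For $n \leq 8$ I would verify by exhaustive computer enumeration of the 1-factorisations of $Circ(2n,\left\lbrace 1,3 \right\rbrace)$ that no choice produces six distinct pair-types, ruling out these small cases in the same fashion as the preceding two theorems handled their small cases.

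For $n \geq 9$ the strategy is to exhibit two explicit base 1-factorisations, say $\mathcal{F}_\gamma$ of $Circ(18,\left\lbrace 1,3 \right\rbrace)$ and $\mathcal{F}_\delta$ of $Circ(20,\left\lbrace 1,3 \right\rbrace)$, each satisfying Condition C and each being a 6-B1F. Applying Lemma \ref{Lem: Inductive Construction} iteratively would then yield 1-factorisations $\mathcal{F}_\gamma^k$ of $Circ(18+4k,\left\lbrace 1,3 \right\rbrace)$ and $\mathcal{F}_\delta^k$ of $Circ(20+4k,\left\lbrace 1,3 \right\rbrace)$ for every $k \geq 0$, and together with the two base cases these would cover all $n \geq 9$. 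For each base I would record, for every pair $X\cup Y \in \left\lbrace R\cup B,\, B\cup G,\, G\cup Y,\, R\cup G,\, B\cup Y,\, R\cup Y \right\rbrace$, which cycles contain the distinguished vertices $0,1,2,3$, so that Corollary \ref{Cor: Types after repeated extension} produces an explicit parametric description of each pair's type as a function of $k$.

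The main obstacle is locating base 1-factorisations for which all six of the resulting $k$-parametric types remain pairwise distinct for every $k \geq 0$; I expect this to require a short targeted computer search among 1-factorisations of $Circ(18,\left\lbrace 1,3 \right\rbrace)$ and $Circ(20,\left\lbrace 1,3 \right\rbrace)$ that satisfy Condition C, filtering for the base 6-B1F property together with a vertex-condition pattern that yields asymptotic distinctness. A subtle point is that pairs where two distinguished vertices land on different cycles will increase two cycle lengths by $2k$ each, while other pairs increase a single cycle by $4k$; arranging the base cycle-lengths so that no two of these six parametric lists ever coincide is what constrains the search. Once suitable bases are found, verifying that $\mathcal{F}_\gamma^k$ and $\mathcal{F}_\delta^k$ are 6-B1Fs reduces to comparing six explicit expressions of the form $\left[c_1+4k, c_2, \ldots\right]$ or $\left[c_1+2k, c_2+2k, \ldots\right]$, which is an entirely mechanical check via Corollary \ref{Cor: Types after repeated extension}.
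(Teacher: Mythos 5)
Your proposal matches the paper's argument: computer enumeration rules out $n \leq 8$, and explicit base 1-factorisations satisfying Condition C are extended via Lemma \ref{Lem: Inductive Construction} and Corollary \ref{Cor: Types after repeated extension}, with the parametric types checked to stay pairwise distinct for all $k$. The only (inessential) difference is that the paper treats $n=9$ as a standalone explicit example and uses $Circ(20,\{1,3\})$ and $Circ(22,\{1,3\})$ as the two extension bases, whereas you propose basing the induction at $18$ and $20$; either choice of residue classes covers all $n \geq 9$.
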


\begin{proof}
    For \(n \leq 8\), we enumerated all possible 1-factorisations of \(Circ(2n,\left\lbrace 1,3 \right\rbrace)\) by computer and determined that none are 6-B1Fs. Thus it remains to show that there exists a 6-B1F of \(Circ(2n, \left\lbrace 1,3 \right\rbrace)\) for all \(n \geq 9\).

    We claim that the following 1-factorisation, \(\mathcal{F}_\alpha=\left\lbrace R,G,B,Y \right\rbrace\) is a 6-B1F with types \((\left[18\right],\left[14,4\right],\left[12,6\right],\left[10,8\right],\left[10,4,4\right],\left[8,6,4\right])\).
    \begin{align*}
        R & = \left\lbrace
        \left\lbrace
        0,1
        \right\rbrace,
        \left\lbrace
        2,3
        \right\rbrace,
        \left\lbrace
        4,5
        \right\rbrace,
        \left\lbrace
        6,9
        \right\rbrace,
        \left\lbrace
        7,8
        \right\rbrace,
        \left\lbrace
        10,11
        \right\rbrace,
        \left\lbrace
        12,13
        \right\rbrace,
        \left\lbrace
        14,17
        \right\rbrace,
        \left\lbrace
        15,16
        \right\rbrace
        \right\rbrace        \\
        B & = \left\lbrace
        \left\lbrace
        0,17
        \right\rbrace,
        \left\lbrace
        1,2
        \right\rbrace,
        \left\lbrace
        3,4
        \right\rbrace,
        \left\lbrace
        5,6
        \right\rbrace,
        \left\lbrace
        7,10
        \right\rbrace,
        \left\lbrace
        8,9
        \right\rbrace,
        \left\lbrace
        11,14
        \right\rbrace,
        \left\lbrace
        12,15
        \right\rbrace,
        \left\lbrace
        13,16
        \right\rbrace
        \right\rbrace        \\
        G & = \left\lbrace
        \left\lbrace
        0,3
        \right\rbrace,
        \left\lbrace
        1,4
        \right\rbrace,
        \left\lbrace
        2,5
        \right\rbrace,
        \left\lbrace
        6,7
        \right\rbrace,
        \left\lbrace
        8,11
        \right\rbrace,
        \left\lbrace
        9,12
        \right\rbrace,
        \left\lbrace
        10,13
        \right\rbrace,
        \left\lbrace
        14,15
        \right\rbrace,
        \left\lbrace
        16,17
        \right\rbrace
        \right\rbrace        \\
        Y & = \left\lbrace
        \left\lbrace
        0,15
        \right\rbrace,
        \left\lbrace
        1,16
        \right\rbrace,
        \left\lbrace
        2,17
        \right\rbrace,
        \left\lbrace
        3,6
        \right\rbrace,
        \left\lbrace
        4,7
        \right\rbrace,
        \left\lbrace
        5,8
        \right\rbrace,
        \left\lbrace
        9,10
        \right\rbrace,
        \left\lbrace
        11,12
        \right\rbrace,
        \left\lbrace
        13,14
        \right\rbrace
        \right\rbrace
    \end{align*}

    For \(n \geq 10 \) we will show the existence of 6-B1Fs of \(Circ(20+4k,\left\lbrace 1,3 \right\rbrace)\) and \(Circ(22+4k,\left\lbrace 1,3 \right\rbrace)\) for all \(k \geq 0 \) seperately.
    We claim that the following 1-factorisation, \({\mathcal{F}_\beta=\left\lbrace R,B,G,Y \right\rbrace}\), of \(Circ(20,\left\lbrace 1,3 \right\rbrace)\) is a 6-B1F with types \[\left(\left[20\right],\left[16,4\right],\left[14,6\right],\left[12,8\right],\left[12,4,4\right],\left[10,6,4\right]\right).\]
    \begin{align*}
        R & = \left\lbrace\left\lbrace0, 19\right\rbrace, \left\lbrace1, 4\right\rbrace, \left\lbrace2, 3\right\rbrace, \left\lbrace5, 8\right\rbrace, \left\lbrace6, 7\right\rbrace, \left\lbrace9, 10\right\rbrace, \left\lbrace11, 12\right\rbrace, \left\lbrace13, 16\right\rbrace, \left\lbrace14, 17\right\rbrace, \left\lbrace15, 18\right\rbrace\right\rbrace \\
        B & = \left\lbrace\left\lbrace0, 1\right\rbrace, \left\lbrace2, 5\right\rbrace, \left\lbrace3, 6\right\rbrace, \left\lbrace4, 7\right\rbrace, \left\lbrace8, 9\right\rbrace, \left\lbrace10, 13\right\rbrace, \left\lbrace11, 14\right\rbrace, \left\lbrace12, 15\right\rbrace, \left\lbrace16, 19\right\rbrace, \left\lbrace17, 18\right\rbrace\right\rbrace \\
        G & = \left\lbrace\left\lbrace0, 17\right\rbrace, \left\lbrace1, 18\right\rbrace, \left\lbrace2, 19\right\rbrace, \left\lbrace3, 4\right\rbrace, \left\lbrace5, 6\right\rbrace, \left\lbrace7, 10\right\rbrace, \left\lbrace8, 11\right\rbrace, \left\lbrace9, 12\right\rbrace, \left\lbrace13, 14\right\rbrace, \left\lbrace15, 16\right\rbrace\right\rbrace \\
        Y & = \left\lbrace\left\lbrace0, 3\right\rbrace, \left\lbrace1, 2\right\rbrace, \left\lbrace4, 5\right\rbrace, \left\lbrace6, 9\right\rbrace, \left\lbrace7, 8\right\rbrace, \left\lbrace10, 11\right\rbrace, \left\lbrace12, 13\right\rbrace, \left\lbrace14, 15\right\rbrace, \left\lbrace16, 17\right\rbrace, \left\lbrace18, 19\right\rbrace\right\rbrace
    \end{align*}
    It is straightforward to check that \(\mathcal{F}_\beta\) satisfies Condition C. It is also straightforward to check that
    \begin{multicols}{2}
    \begin{itemize}
        \item \(1\) lies on a \(14\)-cycle in \(R\cup B\),
        \item \(2\) lies on a \(16\)-cycle in \(B\cup G\),
        \item \(3\) lies on a \(12\)-cycle in \(G\cup Y\),
    \end{itemize}
    \columnbreak
    \begin{itemize}
        \item \(1\) and \(2\) lie on a \(12\)-cycle in \(R\cup G\),
        \item \(2\) and \(3\) lie on a \(10\)-cycle in \(B\cup Y\),
        \item and \(0\) lies on a \(20\)-cycle in \(R\cup Y\).
    \end{itemize}
\end{multicols}
    Thus, by Corollary \ref{Cor: Types after repeated extension}, \(\mathcal{F}_\beta^k\) is a 1-factorisation of \(Circ(20+4k,\left\lbrace 1,3 \right\rbrace)\) with the following types:
    \begin{multicols}{2}
        \begin{itemize}
        \item \([14+4k,6]\),
        \item \([16+4k,4]\),
        \item \([12+4k,4,4]\),
    \end{itemize}
    \columnbreak
    \begin{itemize}
        \item \([12+4k,8]\),
        \item \([10+4k,6,4]\),
        \item and \([20+4k]\).
    \end{itemize}
\end{multicols}
    As these types are clearly distinct for every integer \(k\), \(\mathcal{F}_\beta^k\) is a 6-B1F of \(Circ(20+4k,\left\lbrace 1,3 \right\rbrace)\) with types \[\left(\left[20+4k\right],\left[16+4k,4\right],\left[14+4k,6\right],\left[12+4k,8\right],\left[12+4k,4,4\right],\left[10+4k,6,4\right]\right)\] for all \(k \geq 0 \).

    Now, we claim that the following 1-factorisation, \(\mathcal{F}_\gamma=\left\lbrace R,B,G,Y \right\rbrace\), of \(Circ(22,\left\lbrace 1,3 \right\rbrace)\) is a 6-B1F with types \(\left(\left[22\right],\left[18,4\right],\left[16,6\right],\left[14,8\right],\left[14,4,4\right],\left[8,8,4\right]\right).\)
    \begin{align*}
        R & = \left\lbrace\left\lbrace0, 19\right\rbrace, \left\lbrace1, 4\right\rbrace, \left\lbrace2, 3\right\rbrace, \left\lbrace5, 6\right\rbrace, \left\lbrace7, 8\right\rbrace, \left\lbrace9, 12\right\rbrace, \left\lbrace10, 11\right\rbrace, \left\lbrace13, 14\right\rbrace, \left\lbrace15, 16\right\rbrace, \left\lbrace17, 18\right\rbrace, \left\lbrace20, 21\right\rbrace\right\rbrace \\
        B & = \left\lbrace\left\lbrace0, 21\right\rbrace, \left\lbrace1, 20\right\rbrace, \left\lbrace2, 5\right\rbrace, \left\lbrace3, 6\right\rbrace, \left\lbrace4, 7\right\rbrace, \left\lbrace8, 11\right\rbrace, \left\lbrace9, 10\right\rbrace, \left\lbrace12, 13\right\rbrace, \left\lbrace14, 15\right\rbrace, \left\lbrace16, 17\right\rbrace, \left\lbrace18, 19\right\rbrace\right\rbrace \\
        G & = \left\lbrace\left\lbrace0, 1\right\rbrace, \left\lbrace2, 21\right\rbrace, \left\lbrace3, 4\right\rbrace, \left\lbrace5, 8\right\rbrace, \left\lbrace6, 9\right\rbrace, \left\lbrace7, 10\right\rbrace, \left\lbrace11, 12\right\rbrace, \left\lbrace13, 16\right\rbrace, \left\lbrace14, 17\right\rbrace, \left\lbrace15, 18\right\rbrace, \left\lbrace19, 20\right\rbrace\right\rbrace \\
        Y & = \left\lbrace\left\lbrace0, 3\right\rbrace, \left\lbrace1, 2\right\rbrace, \left\lbrace4, 5\right\rbrace, \left\lbrace6, 7\right\rbrace, \left\lbrace8, 9\right\rbrace, \left\lbrace10, 13\right\rbrace, \left\lbrace11, 14\right\rbrace, \left\lbrace12, 15\right\rbrace, \left\lbrace16, 19\right\rbrace, \left\lbrace17, 20\right\rbrace, \left\lbrace18, 21\right\rbrace\right\rbrace
    \end{align*}
    It is straightforward to check that \(\mathcal{F}_\gamma\) satisfies Condition C. It is also straightforward to check that
    \begin{multicols}{2}
    \begin{itemize}
        \item \(1\) lies on a \(18\)-cycle in \(R\cup B\),
        \item \(2\) lies on a \(16\)-cycle in \(B\cup G\),
        \item \(3\) lies on a \(22\)-cycle in \(G\cup Y\),
    \end{itemize}
    \columnbreak
    \begin{itemize}
        \item \(1\) and \(2\) lie on a \(8\)-cycle in \(R\cup G\),
        \item \(2\) and \(3\) lie on a \(14\)-cycle in \(B\cup Y\),
        \item and \(0\) lies on a \(14\)-cycle in \(R\cup Y\).
    \end{itemize}
\end{multicols}
   Thus, by Corollary \ref{Cor: Types after repeated extension}, \(\mathcal{F}_\beta^k\) is a 1-factorisation of \(Circ(20+4k,\left\lbrace 1,3 \right\rbrace)\) with the following types:
   \begin{multicols}{2}
     \begin{itemize}
        \item \([18+4k,4]\),
        \item \([16+4k,6]\),
        \item \([22+4k]\),
    \end{itemize}
    \columnbreak
    \begin{itemize}
        \item \([8+4k,8,6]\),
        \item \([14+4k,8]\),
        \item and \([14+4k,4,4]\).
    \end{itemize}
\end{multicols}

    As these types are clearly distinct for every integer \(k\), \(\mathcal{F}_\beta^k\) is a 6-B1F of \(Circ(22+4k,\left\lbrace 1,3 \right\rbrace)\) with types \[\left(\left[22+4k\right],\left[18+4k,4\right],\left[16+4k,6\right],\left[14+4k,8\right],\left[14+4k,4,4\right],\left[8+4k,8,6\right]\right)\] for all \(k \geq 0 \).

    Thus, we have shown that \(Circ(2n,\left\lbrace 1,3 \right\rbrace)\) admits a 6-B1F if and only if \(n \geq 9 \).
\end{proof}

\subsection{A More General Construction of B1Fs of \(\mathbf{Circ(2n,\{1,b\} )}\)}
\label{Subsubsec: A More General Construction}
Here we give constructions of 2-B1Fs of \(Circ(2n,\{1,\ell\})\) and \(Circ(2n,\{1,2\ell\})\) for certain values of \(\ell\). We note that the construction in Lemma \ref{Lem: 2-B1Fs of Circ(n,{1,2L})} is a generalisation of the construction used in Lemma \ref{Lem: Construction of 2-B1F of Circ(6a,{1,3})}. The following two constructions share a common approach. They both start with a circulant graph on \(3\ell a\) vertices. We then define the mapping \(\phi: V \rightarrow V\) where \(\phi(v) = v+\ell\) (addition perfomed modulo \(3\ell a\)).

\begin{lemma}\label{Lem: 2-B1Fs of Circ(n,{1,L})}
    Let \(\ell\) be a positive even integer and let \(n\) be an integer such that \(n>3\).\\ If \(2n \equiv 0 \pmod {3\ell}\), then \(Circ(2n,\{1,\ell\})\) admits a 2-B1F.
\end{lemma}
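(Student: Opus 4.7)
The plan is to follow the template of Lemma \ref{Lem: Construction of 2-B1F of Circ(6a,{1,3})}. Writing $2n = 3\ell a$, I will construct a 1-factorisation $\mathcal{F} = \{R, G, B, Y\}$ of $Circ(2n, \{1,\ell\})$ that is symmetric under the translation $\phi(v) = v + \ell$, with $\phi$ fixing $Y$ and cyclically permuting $R, G, B$. Since $\ell$ is even, the choice $Y = \{\{x, x+1\} : x \equiv 0 \pmod 2\}$ is automatically $\phi$-invariant. The generating 1-factor $R$ is specified by a pattern modulo $3\ell$:
\[
R = \{\{x, x+1\} : x \bmod 3\ell \in \{1, 3, \ldots, \ell - 1\}\} \cup \{\{x, x+\ell\} : x \bmod 3\ell \in \{\ell+1, \ldots, 2\ell\}\},
\]
and then $G = \phi(R)$, $B = \phi^2(R)$.

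The first step is to verify that $\mathcal{F}$ is a 1-factorisation. By construction the 1-edges of $R \cup G \cup B$ cover exactly the odd residues $\{1, 3, \ldots, 3\ell - 1\}$ modulo $3\ell$ (the three translates of $\{1, 3, \ldots, \ell - 1\}$ by $0, \ell, 2\ell$), complementing the even-residue 1-edges of $Y$; and the three translates of $\{\ell + 1, \ldots, 2\ell\}$ partition $\mathbb{Z}_{3\ell}$, so each $\ell$-edge appears in exactly one of $R, G, B$. A short residue-class check modulo $3\ell$---the four sets $\{1, 3, \ldots, \ell - 1\}$, $\{2, 4, \ldots, \ell\}$, $\{\ell + 1, \ldots, 2\ell\}$, and $\{2\ell + 1, \ldots, 3\ell - 1, 0\}$ partition $\mathbb{Z}_{3\ell}$---confirms every vertex is incident to exactly one $R$-edge, and the $\phi$-symmetry gives the same for $G$ and $B$.

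Since $\phi$ fixes $Y$ and cyclically permutes $\{R, G, B\}$, it splits the $\binom{4}{2} = 6$ pairs of 1-factors into two $\phi$-orbits, $\{R \cup Y, G \cup Y, B \cup Y\}$ and $\{R \cup G, G \cup B, B \cup R\}$; within each orbit the unions are isomorphic graphs. Hence it suffices to show that the representatives $R \cup Y$ and $R \cup G$ have different cycle-types. Tracing cycles, one finds that $R \cup Y$ decomposes into one long cycle which, per block of $3\ell$ consecutive vertices, visits $0, 1, \ldots, \ell + 1, 2\ell + 1, 2\ell$ and then steps to $3\ell$ (the start of the next block), closing after $a(\ell + 4)$ vertices; the ``skipped'' vertices within each block form $4$-cycles $(k, k+1, k+\ell+1, k+\ell)$, contributing $a(\ell - 2)/2$ such $4$-cycles. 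So $R \cup Y$ has type $[a(\ell + 4), 4^{a(\ell-2)/2}]$. On the other hand, every cycle of $R \cup G$ uses two $1$-edges and four $\ell$-edges whose signed displacements cancel in pairs, so each cycle closes after exactly six steps and $R \cup G$ has type $[6^{\ell a/2}]$. These two types coincide only in the degenerate case $\ell = 2, a = 1$ (i.e.\ $n = 3$), which the hypothesis $n > 3$ excludes; so $\mathcal{F}$ is a 2-B1F.

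The main obstacle is the cycle-structure computation for $R \cup G$: one must check, by residue-class analysis modulo $3\ell$, that the alternation between $R$- and $G$-edges always traverses two $1$-edges and four $\ell$-edges with cancelling signed displacements before returning to the starting vertex. The analysis for $R \cup Y$ is more straightforward, since $Y$-edges and most $R$-edges are consecutive $1$-edges and the $\ell$-edges of $R$ produce only two localised detours per period.
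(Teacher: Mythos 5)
Your construction is, up to a translation of the vertex labels, exactly the one in the paper: a $\phi$-invariant 1-factorisation with $\phi(v)=v+\ell$ fixing the all-1-edges factor and cyclically permuting the other three, reducing the check to the two representative unions, which you correctly identify as having types $\left[a(\ell+4),4^{a(\ell-2)/2}\right]$ and $\left[6^{\ell a/2}\right]$. The proof is correct and follows essentially the same route as the paper's.
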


\begin{proof}
    Let \(G=Circ(2n,\{1,\ell\})\) with vertex set \(V=\left\lbrace 0,1, \dots, {3\ell a-1}\right\rbrace \) for some positive integer \(a\), and let the 1-factor \(F_1\) be defined as:
    \begin{align*}
        F_1	\,  = & \left\lbrace
        \left\lbrace
        x,x+\ell
        \right\rbrace
        : x \equiv 0,1,2,\dots, \ell-1 \pmod {3\ell}
        \right\rbrace            \\
        \quad \cup
                  & \left\lbrace
        \left\lbrace
        x,x+1
        \right\rbrace
        : x \equiv 2\ell, 2\ell+2, 2\ell+4, \dots, 3\ell -2 \pmod{3\ell}
        \right\rbrace.
    \end{align*}
    Further let \(F_2=\phi(F_1)\) and \(F_3=\phi(F_2)\) (noting that \(\phi(F_3)=F_1\)). Observe that the edges of \(G\) that do not appear in \(F_1, F_2,F_3\) are the edges \(\{x,x+1\}\) where \(x \equiv 1 \pmod 2\), form a 1-factor, and shall be called \(F_4\). Clearly, \(\mathcal{F}=\{F_1,F_2,F_3,F_4\}\) is a 1-factorisation of \(G\) and it follows that \(F_1\cup F_2\cong F_1\cup F_3\cong F_2\cup F_3\) and \(F_1 \cup F_4 \cong F_2\cup F_4 \cong F_3\cup F_4\). Thus, to show that \(\mathcal{F}\) is a 2-B1F, it suffices to determine the types of \(F_1 \cup F_4\) and \(F_1\cup F_2\).
    
    As mentioned previously, to show that \(\mathcal{F}\) is a 2-B1F we must determine the types of \(F_1 \cup F_4\) and \(F_1\cup F_2\).

    First, consider the union \(F_1 \cup F_4\). Let the vertex set of \(G\) be \[V=\left\lbrace 0,1, \dots, {3\ell a-1}\right\rbrace = V_0 \cup V_1 \cup \dots \cup V_{a-1} \] where \(V_j= \left\lbrace {j(3\ell)},{j(3\ell)+1}, \dots ,{(j+1)(3\ell)-1}  \right\rbrace\) for \(0 \leq j \leq a-1\). Clearly the structure of \(F_1 \cup F_4\) will be the same on each \(V_j\). Observe that \(2\ell -4\) vertices of \(V_{j}\) will lie on the 4-cycles \[({j(3\ell)+i},{j(3\ell)+i+\ell}, {j(3\ell)+i+\ell+1},{j(3\ell)+i+1})\] for \(i \in \left\lbrace 1,3, \dots, \ell -3 \right\rbrace\). The remaining \(\ell+4\) vertices of \(V_j\) will lie on the path
    \begin{align*} P_j & =[j(3\ell),j(3\ell)+\ell,j(3\ell)+\ell-1,j(3\ell)+2\ell-1] \\
        &\qquad \cup [j(3\ell)+2\ell-1, j(3\ell)+2\ell,j(3\ell)+2\ell+1, \dots, j(3\ell)+3\ell-2, j(3\ell)+3\ell-1 ]
    \end{align*}
    from \({j(3\ell)}\) to \({(j+1)(3\ell) -1}\) (See Figure \ref{Fig: P_0} for an example). When we consider the union of all \(V_j\) it is easy to see that the paths \(P_j\) along with the edges of the form \(\left\lbrace j(3\ell)-1,j(3\ell) \right\rbrace\) from \(F_4\) will form the \(a(\ell+4)\)-cycle \[P_0\cup \left\lbrace (3\ell) -1,(3\ell) \right\rbrace \cup P_1 \cup \left\lbrace 2(3\ell) -1,2(3\ell) \right\rbrace \cup P_3 \cup \dots \cup P_a \cup \left\lbrace a(3\ell) -1,0 \right\rbrace.\]
    Thus, \(F_1\cup F_4, F_2\cup F_4\), and \(F_3\cup F_4\) will have type \(\left[a(\ell+4),4^{a(\tfrac{\ell}{2}-1)}\right]\).

    \begin{figure}[ht!]
        \begin{center}
            \begin{tikzpicture}[dot/.style={circle, fill=black, inner sep=0pt, minimum size=1pt},ddot/.style={circle,fill=black,inner sep=0pt,minimum size=10pt},dddot/.style={circle,fill=black,inner sep=0pt,minimum size=2pt},
                    lbl/.style={}]% Create the nodes
                \def\n{40} % number of nodes total
                \def\f{15} % number of nodes past 0
                \def\lf{\the\numexpr\f} % number of labels past 0
                \def\b{0} % number of nodes before 0
                \def\lb{4} % number of labels before 0

                \def\radius{6cm} % radius of circle
                % labelled Vertices from 0 forwards
                \foreach \s in {0}
                {
                \node[dot,label={[lbl,label distance=2mm]360/\n * (-\s)+158:\(0\)}] (\s) at ({360/\n * (-\s)+158}:\radius) {};
                }
                % \foreach \s in {1}
                % {
                % \node[dot,label={[lbl,label distance=2mm]360/\n * (-\s)+158:\(1\)}] (\s) at ({360/\n * (-\s)+158}:\radius) {};
                % }
                \foreach \s in {4}
                {
                \node[dot,label={[lbl,label distance=2mm]360/\n * (-\s)+158:\(\ell-1\)}] (\s) at ({360/\n * (-\s)+158}:\radius) {};
                }

                \foreach \s in {5}
                    {
                        \node[dot,label={[lbl,label distance=2mm]360/\n * (-\s)+158:\(\ell\)}] (\s) at ({360/\n * (-\s)+158}:\radius) {};
                    }
                \foreach \s in {9}
                    {
                        \node[dot,label={[lbl,label distance=2mm]360/\n * (-\s)+158:\(2\ell-1\)}] (\s) at ({360/\n * (-\s)+158}:\radius) {};
                    }
                \foreach \s in {10}
                    {
                        \node[dot,label={[lbl,label distance=2mm]360/\n * (-\s)+158:\(2\ell\)}] (\s) at ({360/\n * (-\s)+158}:\radius) {};
                    }
                    
                \foreach \s in {11}
                {
                    \node[dot,label={[lbl,label distance=2mm]360/\n * (-\s)+158:\(2\ell+\the\numexpr\s-10\)}] (\s) at ({360/\n * (-\s)+158}:\radius) {};
                }
                \foreach \s in {13,14}
                {
                    \node[dot,label={[lbl,label distance=2mm]360/\n * (-\s)+158:\(3\ell-\the\numexpr 15-\s\)}] (\s) at ({360/\n * (-\s)+158}:\radius) {};
                }
                \foreach \s in {15}
                {
                    \node[dot,label={[lbl,label distance=2mm]360/\n * (-\s)+158:\(3\ell\)}] (\s) at ({360/\n * (-\s)+158}:\radius) {};
                }
                % unlabelled Vertices from 0 forwards
                \foreach \s in {\lf,...,\numexpr\f\relax}
                    {
                        \node[dot] (\s) at ({360/\n * (-\s)+158}:\radius) {};
                    }
                
                \foreach \s in {0,...,1}
                    {
                        \node[ddot] (disp\s) at ({360/\n * (-\s)+158}:\radius) {};
                    }
                \foreach \s in {-1,...,1}
                    {
                        \node[dddot] (disp2\s) at ({360/\n * (-2)+360/(\n*10) * (-\s)+158}:\radius) {};
                    }    
                \foreach \s in {3,...,6}
                    {
                        \node[ddot] (disp\s) at ({360/\n * (-\s)+158}:\radius) {};
                    }
                \foreach \s in {-1,...,1}
                {
                    \node[dddot] (disp7\s) at ({360/\n * (-7)+360/(\n*10) * (-\s)+158}:\radius) {};
                }    
                \foreach \s in {8,...,11}
                    {
                        \node[ddot] (disp\s) at ({360/\n * (-\s)+158}:\radius) {};
                    }
                    \foreach \s in {-1,...,1}
                    {
                        \node[dddot] (disp12\s) at ({360/\n * (-12)+360/(\n*10) * (-\s)+158}:\radius) {};
                    }   
                \foreach \s in {13,...,\numexpr\f\relax}
                    {
                        \node[ddot] (disp\s) at ({360/\n * (-\s)+158}:\radius) {};
                    }
                \begin{scope}[on background layer]
                    \foreach \s/\y in  {4/5, 9/10, 11/disp12-1,disp121/13, 14/15}
                        {
                       \draw [circ_darkred]  (\s) -- (\y);
                        }
                \foreach \s/\y in {0/5,4/9}
                    {
                        \draw [circ_darkblue]  (\s) to[bend left=-10] (\y);
                    }
                \foreach \s/\y in {10/11,13/14}
                    {
                        \draw [circ_darkblue]  (\s) -- (\y);
                    }
                    
                \end{scope}
                \begin{scope}[node distance=1cm, every node/.style={font=\sffamily}, align=left]
                    \matrix [above right=of current bounding box.north east, yshift=-0.75cm,xshift=-0.25cm,anchor=north east, nodes={inner sep=0pt}, row sep=0.15cm] {
                    % Line for Edge type 1
                    \node [label=right:{\(F_1\)}] (legend1) {};
                    \draw [circ_darkblue] ([xshift=-1.1cm]legend1.west) -- ([xshift=-0.1cm]legend1.west);
                    \\
                    % Line for Edge type 2
                    \node [label=right:{\(F_4\)}] (legend2) {};
                    \draw [circ_darkred] ([xshift=-1.1cm]legend2.west) -- ([xshift=-0.1cm]legend2.west);
                    \\
                    };
                \end{scope}
            \end{tikzpicture}
            \caption{\(P_0\)}\label{Fig: P_0}
        \end{center}
    \end{figure}

    We now consider the union \(F_1 \cup F_2\). It is easy to confirm that it is a collection of \(6\)-cycles of the form \[(i, {i+1}, {i+1+\ell},{i+1+2\ell},{i+2\ell},{i+\ell}).\] Thus, \(\mathcal{F}\) is a 2-B1F with types \(\left(\left[a(\ell+4),4^{a(\tfrac{\ell}{2}-1)}\right],\left[6^{\tfrac{\ell a}{2}}\right]\right)\).
\end{proof}

\begin{lemma} \label{Lem: 2-B1Fs of Circ(n,{1,2L})}
    Let \(\ell\) be a positive even integer and let \(n\) be an integer such that \(n>3\).\\ If \(2n \equiv 0 \pmod {3\ell}\), then \(Circ(2n,\{1,2\ell\})\) admits a 2-B1F.
\end{lemma}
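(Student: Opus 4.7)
My plan is to mimic the construction from Lemma~\ref{Lem: 2-B1Fs of Circ(n,{1,L})} with $\ell$ replaced by $2\ell$ inside the matching edges while keeping the same organising symmetry $\phi(v)=v+\ell$. Write $2n=3\ell a$; note that in order for $2\ell$ to be a valid connection distance we need $a\geq 2$, and when $a=1$ one has $Circ(2n,\{1,2\ell\})=Circ(2n,\{1,\ell\})$, which is already handled by Lemma~\ref{Lem: 2-B1Fs of Circ(n,{1,L})}. I would define
\begin{align*}
F_1 = & \left\{\{x,x+2\ell\} : x\equiv 0,1,\ldots,\ell-1 \pmod{3\ell}\right\} \\
      & \quad \cup\; \left\{\{x,x+1\} : x\equiv \ell,\ell+2,\ldots,2\ell-2 \pmod{3\ell}\right\},
\end{align*}
so that within each length-$3\ell$ block the $2\ell$-edges match the first $\ell$ vertices to the last $\ell$ vertices and the 1-edges cover the middle $\ell$ vertices in consecutive pairs. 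A direct check confirms $F_1$ is a 1-factor, that $F_2=\phi(F_1)$ and $F_3=\phi^2(F_1)$ are disjoint 1-factors, and that the edges not yet used are exactly the 1-edges $\{x,x+1\}$ with $x$ odd, which form a fourth 1-factor $F_4$ with $\phi(F_4)=F_4$.

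The symmetry under $\phi$ gives $F_1\cup F_2\cong F_2\cup F_3\cong F_3\cup F_1$ and $F_1\cup F_4\cong F_2\cup F_4\cong F_3\cup F_4$, so it suffices to determine the types of $F_1\cup F_2$ and $F_1\cup F_4$. For $F_1\cup F_2$, from any vertex $x$ with $x\equiv 0,2,\ldots,\ell-2 \pmod{3\ell}$ I would trace the $6$-cycle
\[
\bigl(x,\; x+2\ell,\; x+2\ell+1,\; x+1,\; x+1-2\ell,\; x-2\ell\bigr),
\]
whose edges alternate between $F_1$ and $F_2$ and which lies entirely inside the block $V_j$ containing $x$. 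A short count shows these $\ell a/2$ cycles partition the vertex set, giving type $\left[6^{\ell a/2}\right]$.

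For $F_1\cup F_4$ I expect that inside each block $V_j=\{j(3\ell),\, j(3\ell)+1,\ldots,(j+1)(3\ell)-1\}$ the structure splits into $\tfrac{\ell}{2}-1$ disjoint $4$-cycles of the form $(y,\, y+2\ell,\, y+2\ell+1,\, y+1)$ for odd $y\in\{j(3\ell)+1,\, j(3\ell)+3,\ldots,j(3\ell)+\ell-3\}$, together with a single path $P_j$ of length $\ell+3$ from $j(3\ell)$ to $(j+1)(3\ell)-1$ traversing the remaining $\ell+4$ vertices of $V_j$ (the path starts with a $2\ell$-jump, then walks backwards through the middle of the block alternating $F_4$ and $F_1$ 1-edges, and finally takes a $2\ell$-jump to the last vertex of the block). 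The boundary edges $\{j(3\ell)-1,\, j(3\ell)\}\in F_4$ then splice the paths $P_j$ together into a single cycle of length $a(\ell+4)$, yielding type $\left[a(\ell+4),\, 4^{a(\ell/2-1)}\right]$. The two types are distinct: for $\ell\geq 4$ the second contains $4$-cycles while the first has only $6$-cycles, and for $\ell=2$ they reduce to $[6a]$ versus $[6^{a}]$, which differ since $a\geq 2$. The main obstacle I anticipate is carefully describing the path $P_j$ and verifying that stringing these paths across consecutive block boundaries produces one long cycle rather than several shorter ones; this is directly parallel to the corresponding argument in Lemma~\ref{Lem: 2-B1Fs of Circ(n,{1,L})}, whose accompanying figure (Figure~\ref{Fig: P_0}) serves as an excellent visual template to adapt.
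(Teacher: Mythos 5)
Your construction is exactly the paper's: the same $F_1$, the same rotation $\phi(v)=v+\ell$ producing $F_2$, $F_3$ and the leftover factor $F_4$, the same symmetry reduction to the two unions $F_1\cup F_2$ and $F_1\cup F_4$, and the same decomposition of $F_1\cup F_4$ into $4$-cycles plus block-paths spliced into one $a(\ell+4)$-cycle and of $F_1\cup F_2$ into $6$-cycles, yielding the types $\left[a(\ell+4),4^{a(\ell/2-1)}\right]$ and $\left[6^{\ell a/2}\right]$. The only slip is your parenthetical claim that each $6$-cycle lies entirely inside the block $V_j$ containing $x$ (the vertices $x-2\ell$ and $x+1-2\ell$ land in $V_{j-1}$), but nothing depends on this since the listed closed walks are genuine components of the $2$-regular graph $F_1\cup F_2$; your explicit check that the two types are distinct, including the $\ell=2$, $a\geq 2$ case, is a point the paper leaves implicit.
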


\begin{proof}
    Let \(G=Circ(2n,\{1,{2\ell}\})\) with vertex set \(V=\left\lbrace 0,1, \dots, {3\ell a-1}\right\rbrace \) for some positive integer \(a\), and let the 1-factor \(F_1\) be defined as:
    \begin{align*}
        F_1	\,  = & \left\lbrace
        \left\lbrace
        x,x+2\ell
        \right\rbrace
        : x \equiv 0,1,2,\dots, \ell-1 \pmod {3\ell}
        \right\rbrace            \\
        \quad \cup
                  & \left\lbrace
        \left\lbrace
        x,x+1
        \right\rbrace
        : x \equiv \ell, \ell+2, \ell+4, \dots, 2\ell -2 \pmod{3\ell}
        \right\rbrace.
    \end{align*}

    Further let \(F_2=\phi(F_1)\) and \(F_3=\phi(F_2)\) (noting that \(\phi(F_3)=F_1\)). Observe that the edges of \(G\) that do not appear in \(F_1, F_2,F_3\) are the edges \(\{x,x+1\}\) where \(x \equiv 1 \pmod 2\), form a 1-factor, and shall be called \(F_4\). Clearly, \(\mathcal{F}=\{F_1,F_2,F_3,F_4\}\) is a 1-factorisation of \(G\) and it follows that \(F_1\cup F_2\cong F_1\cup F_3\cong F_2\cup F_3\) and \(F_1 \cup F_4 \cong F_2\cup F_4 \cong F_3\cup F_4\). Thus, to show that \(\mathcal{F}\) is a 2-B1F, it suffices to determine the types of \(F_1 \cup F_4\) and \(F_1\cup F_2\).

    First, consider the union \(F_1 \cup F_4\). Let the vertex set of \(G\) be \[V=\left\lbrace 0,1, \dots, {3\ell a-1}\right\rbrace = V_0 \cup V_1 \cup \dots \cup V_{a-1} \] where \(V_j= \left\lbrace {j(3\ell)}, {j(3\ell)+1}, \dots ,{(j+1)(3\ell)-1}  \right\rbrace\) for \(0 \leq j \leq a-1\). Clearly the structure of \(F_1 \cup F_4\) will be the same on each \(V_j\). Observe that \(2\ell -4\) vertices of \(V_{j}\) will lie on the 4-cycles \[({j(3\ell)+i},{j(3\ell)+i+2\ell}, {j(3\ell)+i+2\ell+1},{j(3\ell)+i+1})\] for \(i \in \left\lbrace 1,3, \dots, \ell -3 \right\rbrace\). The remaining \(\ell+4\) vertices of \(V_j\) will lie on the path
    \begin{align*} Q_j &=[j(3\ell),j(3\ell)+2\ell] \\ &\quad \cup [j(3\ell)+2\ell ,j(3\ell)+2\ell-1,j(3\ell)+2\ell-2, \dots ,j(3\ell)+\ell, j(3\ell)+\ell-1] \\ &\quad \cup [j(3\ell)+\ell-1, j(3\ell)+3\ell-1]
    \end{align*}
    from \({j(3\ell)}\) to \({(j+1)(3\ell) -1}\) (See Figure \ref{Fig: Q_0} for an example). When we consider the union of all \(V_j\) it is easy to see that the paths \(Q_j\) along with the edges of the form \(\left\lbrace j(3\ell)-1,j(3\ell) \right\rbrace\) from \(F_4\) will form the \(a(\ell+4)\)-cycle \[Q_0\cup \left\lbrace (3\ell) -1,(3\ell) \right\rbrace \cup Q_1 \cup \left\lbrace 2(3\ell) -1,2(3\ell) \right\rbrace \cup Q_3 \cup \dots \cup Q_a \cup \left\lbrace a(3\ell) -1,0 \right\rbrace.\]
    Thus \(F_1\cup F_4, F_2\cup F_4\), and \(F_3\cup F_4\) will have type \(\left[a(\ell+4),4^{a(\tfrac{\ell}{2}-1)}\right]\). 
    \begin{figure}[ht!]
        \begin{center}
            \begin{tikzpicture}[dot/.style={circle, fill=black, inner sep=0pt, minimum size=1pt},ddot/.style={circle,fill=black,inner sep=0pt,minimum size=10pt},dddot/.style={circle,fill=black,inner sep=0pt,minimum size=2pt},
                    lbl/.style={}]% Create the nodes
                \def\n{40} % number of nodes total
                \def\f{15} % number of nodes past 0
                \def\lf{\the\numexpr\f} % number of labels past 0
                \def\b{0} % number of nodes before 0
                \def\lb{4} % number of labels before 0
    
                \def\radius{6cm} % radius of circle
                % labelled Vertices from 0 forwards
                \foreach \s in {0}
                {
                \node[dot,label={[lbl,label distance=2mm]360/\n * (-\s)+158:\(0\)}] (\s) at ({360/\n * (-\s)+158}:\radius) {};
                }
                % \foreach \s in {1}
                % {
                % \node[dot,label={[lbl,label distance=2mm]360/\n * (-\s)+158:\(1\)}] (\s) at ({360/\n * (-\s)+158}:\radius) {};
                % }
                \foreach \s in {4}
                {
                \node[dot,label={[lbl,label distance=2mm]360/\n * (-\s)+158:\(\ell-1\)}] (\s) at ({360/\n * (-\s)+158}:\radius) {};
                }
    
                \foreach \s in {5}
                    {
                        \node[dot,label={[lbl,label distance=2mm]360/\n * (-\s)+158:\(\ell\)}] (\s) at ({360/\n * (-\s)+158}:\radius) {};
                    }
                    \foreach \s in {6}
                    {
                        \node[dot,label={[lbl,label distance=2mm]360/\n * (-\s)+158:\(\ell+1\)}] (\s) at ({360/\n * (-\s)+158}:\radius) {};
                    }
                    \foreach \s in {7}
                    {
                        \node[dot] (\s) at ({360/\n * (-\s)+158}:\radius) {};
                    }
    
                \foreach \s in {8,9}
                    {
                        \node[dot,label={[lbl,label distance=2mm]360/\n * (-\s)+158:\(2\ell-\the\numexpr 10-\s\)}] (\s) at ({360/\n * (-\s)+158}:\radius) {};
                    }
                \foreach \s in {10}
                    {
                        \node[dot,label={[lbl,label distance=2mm]360/\n * (-\s)+158:\(2\ell\)}] (\s) at ({360/\n * (-\s)+158}:\radius) {};
                    }
                    
                \foreach \s in {11,12,13}
                {
                    \node[dot] (\s) at ({360/\n * (-\s)+158}:\radius) {};
                }
                \foreach \s in {14}
                {
                    \node[dot,label={[lbl,label distance=2mm]360/\n * (-\s)+158:\(3\ell-\the\numexpr 15-\s\)}] (\s) at ({360/\n * (-\s)+158}:\radius) {};
                }
                \foreach \s in {15}
                {
                    \node[dot,label={[lbl,label distance=2mm]360/\n * (-\s)+158:\(3\ell\)}] (\s) at ({360/\n * (-\s)+158}:\radius) {};
                }
                % unlabelled Vertices from 0 forwards
                \foreach \s in {\lf,...,\numexpr\f\relax}
                    {
                        \node[dot] (\s) at ({360/\n * (-\s)+158}:\radius) {};
                    }
                % Visible Dots Vertices from 0 forwards
                
                \foreach \s in {0,...,1}
                    {
                        \node[ddot] (disp\s) at ({360/\n * (-\s)+158}:\radius) {};
                    }
                \foreach \s in {-1,...,1}
                    {
                        \node[dddot] (disp2\s) at ({360/\n * (-2)+360/(\n*10) * (-\s)+158}:\radius) {};
                    }    
                \foreach \s in {3,...,6}
                    {
                        \node[ddot] (disp\s) at ({360/\n * (-\s)+158}:\radius) {};
                    }
                \foreach \s in {-1,...,1}
                {
                    \node[dddot] (disp7\s) at ({360/\n * (-7)+360/(\n*10) * (-\s)+158}:\radius) {};
                }    
                \foreach \s in {8,...,11}
                    {
                        \node[ddot] (disp\s) at ({360/\n * (-\s)+158}:\radius) {};
                    }
                    \foreach \s in {-1,...,1}
                    {
                        \node[dddot] (disp12\s) at ({360/\n * (-12)+360/(\n*10) * (-\s)+158}:\radius) {};
                    }   
                \foreach \s in {13,...,\numexpr\f\relax}
                    {
                        \node[ddot] (disp\s) at ({360/\n * (-\s)+158}:\radius) {};
                    }
                \begin{scope}[on background layer]
                    \foreach \s/\y in  {4/5, 9/10,14/15,6/disp7-1, disp71/8}
                        {
                       \draw [circ_darkred]  (\s) -- (\y);
                        }
                \foreach \s/\y in {0/10,5/6,8/9,4/14}
                    {
                        \draw [circ_darkblue]  (\s) -- (\y);
                    }
                    
                \end{scope}
                \begin{scope}[node distance=1cm, every node/.style={font=\sffamily}, align=left]
                    \matrix [above right=of current bounding box.north east, yshift=-0.75cm,xshift=-0.25cm,anchor=north east, nodes={inner sep=0pt}, row sep=0.15cm] {
                    % Line for Edge type 1
                    \node [label=right:{\(F_1\)}] (legend1) {};
                    \draw [circ_darkblue] ([xshift=-1.1cm]legend1.west) -- ([xshift=-0.1cm]legend1.west);
                    \\
                    % Line for Edge type 2
                    \node [label=right:{\(F_4\)}] (legend2) {};
                    \draw [circ_darkred] ([xshift=-1.1cm]legend2.west) -- ([xshift=-0.1cm]legend2.west);
                    \\
                    };
                \end{scope}
            \end{tikzpicture}
            \caption{\(Q_0\)}\label{Fig: Q_0}
        \end{center}
    \end{figure}
    
    We now consider the union \(F_1 \cup F_2\). It is easy to confirm that it is a collection of \(6\)-cycles of the form \[(i, {i+1}, {i+1+2\ell},{i+1+4\ell},{i+4\ell},{i+2\ell}).\] Thus, \(\mathcal{F}\) is a 2-B1F with types \(\left(\left[a(\ell+4),4^{a(\tfrac{\ell}{2}-1)}\right],\left[6^{\tfrac{\ell a}{2}}\right]\right)\).
\end{proof}

\subsection{Computational Results}\label{Subsubsec: Computational Results}
Note in Theorems \ref{Thm: 2-B1Fs of Circ(n,{1,3})}, \ref{Thm: 3-B1Fs of Circ(n,{1,3})}, and \ref{Thm: 6-B1Fs of Circ(n,{1,3})} we referred to computational results. In this section we describe how they were achieved, as well as giving some additional computational results.

By modifying the algorithm used by Herke \cite{SaraPhDThesis} to enumerate all P1Fs of \(Circ(2n,\left\lbrace a,b \right\rbrace)\), we were able to enumerate all 1-factorisations of connected circulants \(Circ(2n,\left\lbrace a,b \right\rbrace)\) for small values of \(n,a,\) and \(b\) and determine for which of these values \(m\)-B1Fs exist for \(m\in \left\lbrace 2,3,6 \right\rbrace\). It is known that two 3- or 4-regular circulant graphs \(Circ(2n,\left\lbrace d_1,d_2 \right\rbrace)\) and \(Circ(2n,\left\lbrace d_1',d_2' \right\rbrace)\) are isomorphic if and only if \(\left\lbrace md_1',md_2' \right\rbrace = \left\lbrace \pm d_1, \pm d_2 \right\rbrace\) modulo \(2n\) for some \(m\in \mathbb{Z}^*_{2n}\) (see \cite{Herke2013P1FCirc} for example).
The algorithm relies on this fact, as it allows us to determine all non-isomorphic 4-regular circulant graphs by their connection sets.
\begin{table}[h]
    \centering
    \small
    \begin{longtable}{|c|c|c|c|c|}
        \hline
        \(n\)                                                           & 2-B1FS &3-B1FS & 6-B1FS &No \(m\)-B1Fs \\ \hline
        %4                                                               &                                                             &                                                             &                                                             \\ \hline
        3                                                               &                                                         &                                                         &  &                                          \\ \hline
        4                                                               &
        \begin{tabular}{c} \(\begin{aligned}
                &\left\lbrace 1,2 \right\rbrace
            \end{aligned}\)\end{tabular} &
        \begin{tabular}{c}\end{tabular}                                 &
        \begin{tabular}{c}\end{tabular}                    &
        \begin{tabular}{c} \(\begin{aligned}
            &\left\lbrace 1,3 \right\rbrace
        \end{aligned}\)\end{tabular}                                                                                                                                                                                            \\ \hline
        5                                                              &
        \begin{tabular}{c}\(\begin{aligned}
                &\left\lbrace 1,3 \right\rbrace
            \end{aligned}\)\end{tabular}  &
        \begin{tabular}{c}\end{tabular}                                 &
        \begin{tabular}{c}\end{tabular}                                                                                                                                                                                                        &   \begin{tabular}{c} \(\begin{aligned}
            &\left\lbrace 1,2 \right\rbrace, \left\lbrace 1,4 \right\rbrace
        \end{aligned}\)\end{tabular}      \\ \hline
        6                                                              &
        \begin{tabular}{c}\(\begin{aligned}
                 & \left\lbrace 1,2 \right\rbrace,\left\lbrace 1,3 \right\rbrace, \\
                 &\left\lbrace 1,4 \right\rbrace, \left\lbrace 2,3 \right\rbrace, \\
                 &\left\lbrace 3,4 \right\rbrace
            \end{aligned}\)\end{tabular}  &
        \begin{tabular}{c}\(\begin{aligned}
                &\left\lbrace 1,3 \right\rbrace,\left\lbrace 1,5 \right\rbrace
            \end{aligned}\)\end{tabular}  &
        \begin{tabular}{c}\end{tabular}                                                                                                                                                                                                               & \begin{tabular}{c} \(\begin{aligned}
        \end{aligned}\)\end{tabular}   \\ \hline
        7                                                              &
        \begin{tabular}{c}\(\begin{aligned}
                &\left\lbrace 1,3 \right\rbrace
            \end{aligned}\)\end{tabular}  &
        \begin{tabular}{c}\(\begin{aligned}
                &\left\lbrace 1,3 \right\rbrace,\left\lbrace 1,4 \right\rbrace
            \end{aligned}\)\end{tabular}  &
        \begin{tabular}{c}\end{tabular}                                                                                                                                                                                                             & \begin{tabular}{c} \(\begin{aligned}
            &\left\lbrace 1,2 \right\rbrace, \left\lbrace 1,6 \right\rbrace
        \end{aligned}\)\end{tabular}   \\ \hline
        8                                                              &
        \begin{tabular}{c}\(\begin{aligned}
                &\left\lbrace 1,3 \right\rbrace,\left\lbrace 1,4 \right\rbrace
            \end{aligned}\)\end{tabular}  &
        \begin{tabular}{c}\(\begin{aligned}
                &\left\lbrace 1,3 \right\rbrace,\left\lbrace 1,4 \right\rbrace, \\
                &\left\lbrace 1,7 \right\rbrace
            \end{aligned}\)\end{tabular}  &
        \begin{tabular}{c}\end{tabular}                                                                                                                                                                                                             & \begin{tabular}{c} \(\begin{aligned}
            &\left\lbrace 1,2 \right\rbrace, \left\lbrace 1,6 \right\rbrace
        \end{aligned}\)\end{tabular}   \\ \hline
        9                                                              &
        \begin{tabular}{c}\(\begin{aligned}
                 & \left\lbrace 1,2 \right\rbrace,\left\lbrace 1,3 \right\rbrace,\\
                 &\left\lbrace 1,4 \right\rbrace, \left\lbrace 1,5 \right\rbrace,\\
                 &\left\lbrace 1,6 \right\rbrace,\left\lbrace 2,3 \right\rbrace
            \end{aligned}\)\end{tabular}  &
        \begin{tabular}{c}\(\begin{aligned}
                 & \left\lbrace 1,3 \right\rbrace,\left\lbrace 1,4 \right\rbrace,\\
                 &\left\lbrace 1,5 \right\rbrace, \left\lbrace 1,6 \right\rbrace,\\
                 &\left\lbrace 1,8 \right\rbrace,\left\lbrace 2,3 \right\rbrace
            \end{aligned}\)\end{tabular}  &
        \begin{tabular}{c}\(\begin{aligned}
                &\left\lbrace 1,3 \right\rbrace,\left\lbrace 1,4 \right\rbrace,\\
                &\left\lbrace 1,5 \right\rbrace
            \end{aligned}\)\end{tabular}                                                                                                                                                                              & \begin{tabular}{c} \(\begin{aligned}
                &\left\lbrace 1,2 \right\rbrace, \left\lbrace 1,3 \right\rbrace
            \end{aligned}\)\end{tabular}   \\ \hline
        10                                                              &
        \begin{tabular}{c}\(\begin{aligned}
                 & \left\lbrace 1,3 \right\rbrace,\left\lbrace 1,4 \right\rbrace,\\
                 &\left\lbrace 1,5 \right\rbrace, \left\lbrace 1,6 \right\rbrace,\\
                 &\left\lbrace 2,5 \right\rbrace,\left\lbrace 4,5 \right\rbrace
            \end{aligned}\)\end{tabular}  &
        \begin{tabular}{c}\(\begin{aligned}
                 & \left\lbrace 1,3 \right\rbrace,\left\lbrace 1,4 \right\rbrace,\\
                 &\left\lbrace 1,5 \right\rbrace, \left\lbrace 1,6 \right\rbrace,\\
                 &\left\lbrace 1,8 \right\rbrace,\left\lbrace 1,9 \right\rbrace
            \end{aligned}\)\end{tabular}  &
        \begin{tabular}{c}\(\begin{aligned}
                 & \left\lbrace 1,3 \right\rbrace,\left\lbrace 1,4 \right\rbrace,\\
                 &\left\lbrace 1,5 \right\rbrace, \left\lbrace 1,6 \right\rbrace,\\
                 & \left\lbrace 1,8 \right\rbrace,\left\lbrace 2,5 \right\rbrace, \\
                 & \left\lbrace 4,5 \right\rbrace
            \end{aligned}\)\end{tabular}                                                                                                                                                                     & \begin{tabular}{c} \(\begin{aligned}
                &\left\lbrace 1,2 \right\rbrace
            \end{aligned}\)\end{tabular}             \\ \hline
        11                                                              &
        \begin{tabular}{c}\(\begin{aligned}
                &\left\lbrace 1,3 \right\rbrace,\left\lbrace 1,5 \right\rbrace,\\
                &\left\lbrace 1,6 \right\rbrace
            \end{aligned}\)\end{tabular}  &
        \begin{tabular}{c}\(\begin{aligned}
                 & \left\lbrace 1,3 \right\rbrace,\left\lbrace 1,4 \right\rbrace,\\
                 &\left\lbrace 1,5 \right\rbrace, \left\lbrace 1,6 \right\rbrace,\\
                 &\left\lbrace 1,8 \right\rbrace,\left\lbrace 1,10 \right\rbrace
            \end{aligned}\)\end{tabular}  &
        \begin{tabular}{c}\(\begin{aligned}
                 & \left\lbrace 1,3 \right\rbrace,\left\lbrace 1,4 \right\rbrace,\\
                 &\left\lbrace 1,5 \right\rbrace, \left\lbrace 1,6 \right\rbrace,\\
                 &\left\lbrace 1,8 \right\rbrace
            \end{aligned}\)\end{tabular}                                                                                                                                                                      & \begin{tabular}{c} \(\begin{aligned}
                &\left\lbrace 1,2 \right\rbrace
            \end{aligned}\)\end{tabular}           \\ \hline
        12                                                              &
        \begin{tabular}{c}\(\begin{aligned}
                 & \left\lbrace 1,2 \right\rbrace,\left\lbrace 1,3 \right\rbrace,\\
                 &\left\lbrace 1,4 \right\rbrace, \left\lbrace 1,5 \right\rbrace, \\
                 & \left\lbrace 1,6 \right\rbrace,\left\lbrace 1,7 \right\rbrace, \\
                 & \left\lbrace 1,8 \right\rbrace,\left\lbrace 1,9 \right\rbrace,\\
                 & \left\lbrace 1,10\right\rbrace,\left\lbrace 2,3 \right\rbrace,\\
                 &\left\lbrace 3,4 \right\rbrace,\left\lbrace 3,8 \right\rbrace
            \end{aligned}\)\end{tabular}  &
        \begin{tabular}{c}\(\begin{aligned}
                 & \left\lbrace 1,3 \right\rbrace,\left\lbrace 1,4 \right\rbrace,\\
                 &\left\lbrace 1,5 \right\rbrace, \left\lbrace 1,7 \right\rbrace,\\
                 &\left\lbrace 1,9 \right\rbrace,\left\lbrace 1,10 \right\rbrace, \\
                 & \left\lbrace 1,11 \right\rbrace,\left\lbrace 2,3 \right\rbrace,\\
                 &\left\lbrace 3,4 \right\rbrace
            \end{aligned}\)\end{tabular}  &
        \begin{tabular}{c}\(\begin{aligned}
                 & \left\lbrace 1,3 \right\rbrace,\left\lbrace 1,4 \right\rbrace,\\
                 &\left\lbrace 1,5 \right\rbrace, \left\lbrace 1,6 \right\rbrace, \\
                 &\left\lbrace 1,7 \right\rbrace,\left\lbrace 1,8 \right\rbrace,  \\
                 & \left\lbrace 1,9 \right\rbrace,\left\lbrace 1,10 \right\rbrace,\\
                 & \left\lbrace 2,3 \right\rbrace, \left\lbrace 3,4 \right\rbrace,\\
                 & \left\lbrace 3,8 \right\rbrace
            \end{aligned}\)\end{tabular}                                                                                                                                                             &   \begin{tabular}{c} \(\begin{aligned}

            \end{aligned}\)\end{tabular}                  \\ \hline
        % 26                                                 &
        % \begin{tabular}{c}\end{tabular} &
        % \begin{tabular}{@{}c@{}}To be Filled\end{tabular} &
        % \begin{tabular}{@{}c@{}}To be Filled\end{tabular}                                                                                                                                                                                         \\ \hline
        % 28                                                 &
        % \begin{tabular}{@{}c@{}}To be Filled\end{tabular} &
        % \begin{tabular}{@{}c@{}}To be Filled\end{tabular} &
        % \begin{tabular}{@{}c@{}}To be Filled\end{tabular}                                                                                                                                                                                           \\ \hline
        % 30                                                 &
        % \begin{tabular}{@{}c@{}}To be Filled\end{tabular} &
        % \begin{tabular}{@{}c@{}}To be Filled\end{tabular} &
        % \begin{tabular}{@{}c@{}}To be Filled\end{tabular}                                                                                                                                                                                           \\ \hline
    \end{longtable}
    \caption{Connection sets \(\left\lbrace a,b \right\rbrace\) and the existence or non-existence of \(m\)-B1Fs of \(Circ(2n,\left\lbrace a,b \right\rbrace)\) for \(n\leq 12\).}
    \label{Tab: m-B1F existence for connected 4-regular circulant graphs on at most 24 vertices}
\end{table}
\newpage
\bibliographystyle{abbrv}

\begin{thebibliography}{10}

    \bibitem{anderson1973finite}
    B.~A. Anderson.
    \newblock Finite topologies and {H}amiltonian paths.
    \newblock {\em J. Combin. Theory Ser. B}, 14:87--93, 1973.
    
    \bibitem{Barajas_Circ_Chrom_numb}
    J.~Barajas and O.~Serra.
    \newblock On the chromatic number of circulant graphs.
    \newblock {\em Discrete Math.}, 309(18):5687--5696, 2009.
    
    \bibitem{bryant2006new}
    D.~Bryant, B.~Maenhaut, and I.~M. Wanless.
    \newblock New families of atomic {L}atin squares and perfect 1-factorisations.
    \newblock {\em J. Combin. Theory Ser. A}, 113(4):608--624, 2006.
    
    \bibitem{EhrenfeuchtSemiperfect1973}
    A.~Ehrenfeucht, V.~Faber, S.~Fajtlowicz, and J.~Mycielski.
    \newblock Representations of finite lattices as partition lattices on finite
      sets.
    \newblock In {\em Proceedings of the {U}niversity of {H}ouston {L}attice
      {T}heory {C}onference ({H}ouston, {T}ex., 1973)}, pages 17--35. Univ.
      Houston, Houston, TX, 1973.
    
    \bibitem{gill2020perfect}
    M.~J. Gill and I.~M. Wanless.
    \newblock Perfect 1-factorisations of $ k\_ $\{$16$\}$ $.
    \newblock {\em Bulletin of the Australian Mathematical Society},
      101(2):177--185, 2020.
    
    \bibitem{SaraPhDThesis}
    S.~Herke.
    \newblock {\em Perfect 1-Factorisations of Circulant Graphs}.
    \newblock Phd thesis, The University of Queensland, 2013.
    
    \bibitem{HerkeP1F4RegCirc}
    S.~Herke.
    \newblock On the perfect 1-factorisation problem for circulant graphs of degree
      4.
    \newblock {\em Australas. J. Combin.}, 60:79--108, 2014.
    
    \bibitem{Herke2013P1FCirc}
    S.~Herke and B.~Maenhaut.
    \newblock Perfect 1-factorisations of circulants with small degree.
    \newblock {\em Electron. J. Combin.}, 20(1):Paper 58, 13, 2013.
    
    \bibitem{kirkman1847problem}
    T.~P. Kirkman.
    \newblock On a problem in combinations.
    \newblock {\em Cambridge and Dublin Mathematical Journal}, 2:191--204, 1847.
    
    \bibitem{kotzig1963hamilton}
    A.~Kotzig.
    \newblock Hamilton graphs and {H}amilton circuits.
    \newblock In {\em Theory of {G}raphs and its {A}pplications ({P}roc. {S}ympos.
      {S}molenice, 1963)}, pages 63--82. Publ. House Czech. Acad. Sci., Prague,
      1964.
    
    \bibitem{kotzig1964theory}
    A.~Kotzig.
    \newblock Problem 20.
    \newblock In {\em Theory of {G}raphs and its {A}pplications ({P}roc. {S}ympos.
      {S}molenice, 1963)}, page 162. Publ. House Czech. Acad. Sci., Prague, 1964.
    
    \bibitem{meszka2003perfect}
    M.~Meszka and A.~Rosa.
    \newblock Perfect 1-factorizations of {$K_{16}$} with nontrivial automorphism
      group.
    \newblock {\em J. Combin. Math. Combin. Comput.}, 47:97--111, 2003.
    
    \bibitem{pike2019perfect}
    D.~A. Pike.
    \newblock A perfect one-factorisation of {$K_{56}$}.
    \newblock {\em J. Combin. Des.}, 27(6):386--390, 2019.
    
    \end{thebibliography}

\end{document}